\documentclass[11pt]{article}
\hoffset=-15pt \voffset=-40pt \topmargin=17pt \textwidth=17cm
\textheight=22cm \evensidemargin=0.5cm \oddsidemargin=0.5cm
\parindent=0.5cm \parskip=0cm \footskip=50pt
\newtheorem{theorem}{Theorem}
\newtheorem{lemma}{Lemma}
\newtheorem{definition}{Definition}
\newtheorem{proof}{Proof}
\usepackage{hyperref}
\newtheorem{remark}{Remark}
\usepackage{amsmath}
\usepackage{graphicx}
\usepackage{amssymb}
\usepackage{graphicx}
\usepackage[dvipsnames,usenames]{color}
\usepackage{subfigure}
\usepackage{multirow}
\usepackage[extra]{tipa}
\usepackage{array}
\setcounter{MaxMatrixCols}{10}


\usepackage[affil-it]{authblk}
\title{On partially homogeneous nearest-neighbour random walks in the quarter plane and their application in the analysis of two-dimensional queues with limited state-dependency\footnote{Accepted in \textit{Queueing Systems}}}

\author{Ioannis Dimitriou \footnote{E-mail: idimit@math.upatras.gr\\Website: \href{https://thalis.math.upatras.gr/~idimit/}{https://thalis.math.upatras.gr/~idimit/}}}
\affil{\small Department of Mathematics, 
University of Patras, P.O.~Box 
26504, Patras, Greece.}
\begin{document}
\maketitle
\begin{abstract} 
This work deals with the stationary analysis of two-dimensional partially homogeneous nearest-neighbour random walks. Such type of random walks are characterized by the fact that the one-step transition probabilities are functions of the state-space. We show that its stationary behaviour is investigated by solving a finite system of linear equations, two matrix functional equations, and a functional equation with the aid of the theory of Riemann (-Hilbert) boundary value problems. This work is strongly motivated by emerging applications in flow level performance of wireless networks that give rise in queueing models with scalable service capacity, as well as in queue-based random access protocols, where the network's parameters are functions of the queue lengths. A simple numerical illustration, along with some details on the numerical implementation are also presented. 
\vspace{2mm}\\
{\textbf{Keywords:}} Limited state-dependency, Nearest-neighbour random walk, Stationary distribution, Boundary value problem, Discrete-time limited processor sharing.
\end{abstract}
\section{Introduction}
In this work, we focus on the stationary analysis of irreducible discrete time Markov chains in the quarter plane $\mathbb{Z}_{+}^{2}$ (where $\mathbb{Z}_{+}$ refers to the set of non-negative integers), whose one-step transition probabilities possess a partial homogeneity property. More precisely, we focus on nearest-neighbour two-dimensional random walks with one-step transition probabilities defined as follows: transitions from an interior point $(n_{1},n_{2})\in\{1,2,\ldots\}\times\{1,2,\ldots\}$ of the state space lead with probability $p_{i,j}(n_{1},n_{2})$ to a neighbouring point $(n_{1}+i,n_{2}+j)$, where $(i,j)\in\{-1,0,1\}\times\{-1,0,1\}$. 

Such a class of state-dependent two-dimensional random walks is instrumental in the analytical investigation of a large class of queueing networks with \textit{interacting queues}, as well as with \textit{scalable service capacity}, i.e., the system parameters are functions of the state of the network. However, the stationary analysis of general state-dependent two-dimensional random walks is still an open problem. 

In this paper, we focus on the partial-homogeneous nearest neighbour random walks in the quarter plane (PH-NNRWQP; see Figure \ref{fig10}), which obey the following property: There exist two positive integers, say $N_{1}>1$, $N_{2}>1$, such that the state space $S=\{(n_{1},n_{2});n_{1},n_{2}\geq0\}$ is split in four non-intersecting subsets, i.e., $S=S_{a}\cup S_{b}\cup S_{c}\cup S_{d}$, where:
\begin{equation}
\begin{array}{rl}
S_{a}=\{(n_{1},n_{2});n_{1}<N_{1},n_{2}<N_{2}\},&S_{b}=\{(n_{1},n_{2});n_{1}\geq N_{1},n_{2}<N_{2}\},\\
S_{c}=\{(n_{1},n_{2});n_{1}<N_{1},n_{2}\geq N_{2}\},&S_{d}=\{(n_{1},n_{2});n_{1}\geq N_{1},n_{2}\geq N_{2}\},
\end{array}\label{split}
\end{equation}
such that for $i,j=0,\pm 1$,
\begin{equation}
\begin{array}{rl}
p_{i,j}(n_{1},n_{2})=&\left\{\begin{array}{ll}
p_{i,j}(N_{1},n_{2}),&(n_{1},n_{2})\in S_{b},\\
p_{i,j}(n_{1},N_{2}),&(n_{1},n_{2})\in S_{c},\\
p_{i,j}(N_{1},N_{2}):=p_{i,j},&(n_{1},n_{2})\in S_{d}.
\end{array}\right.
\end{array}\label{trans}
\end{equation}
Our aim in this work is to provide an analytical approach to investigate its stationary behavior, and state the importance of the models described by such a class of RWQP in the modelling (among others) of emerging engineering applications in multiple access systems.
\begin{figure}[htp]
\centering
\includegraphics[scale=0.5]{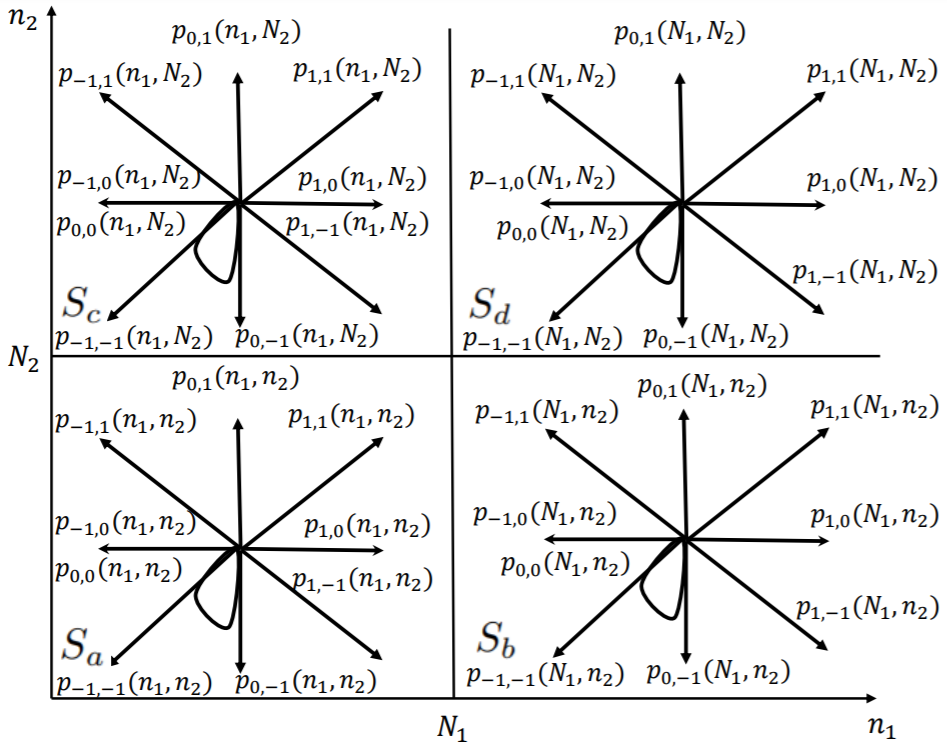}
\caption{The PH-NNRWQP.}
\label{fig10}
\end{figure}
\subsection{Related work}
Since the pioneered works \cite{fay2,maly3}, RWQP has been extensively studied as an important topic of applied probability. More precisely, there are strong links in $i$) queueing theory, e.g., the seminal books \cite{fay1,coh1} provided the foundations of a quite general theory on their stationary behaviour based on the theory of boundary value problems; see also the review in \cite{coh2} on the use of the generating function approach, as well as the works in \cite{flat,c2} that focused on the join the shortest queue problem. $ii$) Applications in finance are also found, e.g., in \cite{cont} where order book events were described
in terms of a two-dimensional Markovian queue. $iii)$ Combinatorics is another field of interest, e.g., \cite{adRL,BM1,rasc}, where the authors focused on methods of counting lattice walks in the quarter plane. $iv)$ Exact tail asymptotics of the stationary joint queue-length distribution of several two-dimensional queues based on a Tauberian-type theorem were studied in \cite{lizha1,lizha2,lizha3}. In \cite{miya}, an extensive overview on light tail asymptotics was given, where in \cite{guileu}, a large deviation approach was presented; see also \cite{oza} for related work in a general setting (non-exhaustive list).

The main body of the related literature is devoted to the analysis of \textit{semi-homogeneous} NNRWQP. With the term \textit{semi-homogeneous}, we mean that the transition probabilities are state-independent in so far as it concerns states belonging to the interior, i.e., $\{(n_{1},n_{2}),n_{1},n_{2}>0\}$, similarly for those of $\{(n_{1},n_{2}),n_{1}>0,n_{2}=0\}$, of $\{(n_{1},n_{2}),n_{1}=0,n_{2}>0\}$ and of $\{(0,0)\}$. Most of the research refers to the investigation of ergodicity conditions; e.g., \cite{rw,fay}. The stationary analysis of \textit{semi-homogeneous} NNRWQP is challenging, and is performed with the aid of the theory of boundary value problems \cite{fay1,coh1}. 

Explicit conditions for recurrence and transience were given in the seminal works in \cite{maly1,malmen}. For a detailed treatment see \cite{fay}. Ergodicity conditions for the \textit{partially homogeneous} case (see Fig. \ref{fig10}) were considered in \cite{malmen} under the assumption that the jumps of the RWQP are bounded. It was later considered in \cite{fayo} under a weaker restriction that the jumps of the RWQP have bounded second moments; see also \cite{fay}. A profound study concerning necessary and sufficient conditions for ergodicity of general RWQP that are continuous to the \textit{West}, to the \textit{South-West} and to the \textit{South}, is given in \cite[Part II]{rw}. We finally mention the book in \cite[Chapters 6-8]{boro}, where stability conditions for general spatially homogeneous multidimensional Markov chains were investigated. 

For a detailed methodological treatment of the stationary analysis of semi-homogeneous RWQP, the reader is referred to the seminal books \cite{fay1,coh1}. In \cite{fay1}, the analysis is concentrated mainly to nearest neighbour RWQPs, while in \cite{coh1} the authors provided a systematic study of RWQP that are continuous to the \textit{West}, to the \textit{South-West} and to the \textit{South}. 

An effective analytical approach can be applied for NNRWQP whose transitions to the \textit{North}, to the \textit{North-East} and to the \textit{East} are not allowed. In particular, it is shown that the probability generating function (pgf) of the stationary joint distribution of the random walk can be explicitly expressed in terms of meromorphic functions; see e.g., the work in \cite{Adan2016} where the shortest queue polling model was investigated, the works in \cite{c3,c2} that focused on the join the shortest queue (JSQ) problem, and the work in \cite{c1} that dealt with the asymmetric clocked buffered switch. 

The analysis becomes quite harder when space homogeneity property collapses. Such a situation arises in the two-dimensional JSQ problem, in which the quarter plane is separated into two homogeneous regions \cite{fay1,coh1}. In these studies, the analysis is reduced to the simultaneous solution of two boundary value problems. In \cite{king}, the author considered the symmetric shortest queue problem, and provided an analytic method to obtain its stationary distribution. An efficient method to obtain the stationary joint queue length distribution in the shortest queue problem (and several of its variants) was developed in \cite{Adan2016,ad1,ad2,ad3} (i.e., the compensation method), by transforming the original RWQP to a random walk with no transitions to the East, to the North-East and to North. Such an approach, provides an explicit characterization of the equilibrium probabilities as an infinite or finite series of product forms.

In \cite{fayo}, the authors considered a two-dimensional birth-death process (i.e., transitions were allowed only to the North, East, West and South) with partial homogeneity that separate the state space in four distinct regions. They showed that its stationary behavior is investigated by solving a boundary value problem and a system of linear equations. Stability conditions for such type of random walks were investigated in \cite{za,faysta}. 
\subsection{Our contribution}\label{contr}
\subsubsection{Fundamental contribution}
In this work, we present an analytical method for analysing the stationary behaviour of a partially homogeneous nearest-neighbour random walk in the quarter plane whose one step transition probabilities for $(n_{1},n_{2})\in S=S_{a}\cup S_{b}\cup S_{c}\cup S_{d}$, and $i,j=0,\pm 1$ are as in (\ref{trans}).
\begin{itemize}
\item We show that the determination of the steady-state distribution of a PH-NNRWQP can be reduced to the solution of $i)$ a finite system of linear equations, $ii)$ of two matrix functional equations, as well as $iii)$ of a non-homogeneous Riemann boundary value problem. 
\item Random walks in the quarter plane with limited state-dependency, can also be used to approximate general state-dependent two-dimensional random walks. This could be done by choosing arbitrarily two integers $N_{1}$, $N_{2}$ and truncating the state space as in \eqref{split}, and by setting the one-step transition probabilities in
subregions $S_{b}$, $S_{c}$, $S_{d}$ in \eqref{trans} equal to zero. Another, possibly more efficient, approach is to use \eqref{trans}, and construct a close approximation of a general state-dependent random walk in the quarter plane. Such an approximation is expected to be better compared with the finite one.
\item In subsection \ref{appl2}, we observe that the analysis presented in Section \ref{pre} can be also applied to the case of non-nearest neighbour two-dimensional random walks with bounded jumps. There, we consider a discrete-time generalized processor sharing two-queue system with limited state-dependency and impatient jobs. In such a model, we may have at most two simultaneous departures from each job type in a time slot. Our work shows that the theory of boundary value problems can be applied to the case where we can have a fixed number of simultaneous departures from either type (i.e., bounded batch departures). Note that such a situation has never reported in the related literature.
\end{itemize}

This work is strongly motivated by the applications of PH-NNRWQP in queueing systems, for which, both the server behaviour and the input streams are, in a non-trivial way, affected by the queue lengths at both queues. This work provides an \textit{analytical method} to study two-dimensional queues with \textit{limited} state-dependency, and show how this state-dependency can be used to improve their performance. To our best knowledge, there is no other unified analytical study that refers to the performance modelling of two-dimensional queueing models with general arrival/service scheduling function. 

It will be more apparent in the following that our modelling framework incorporates a wide range of service disciplines, and further generalizes them, by allowing combinations of service scheduling policies according to the splitting rule \eqref{split}. Such a flexibility is achieved by considering different scheduling policies in each subregion $S_{j}$, $j=a,b,c,d$. A thorough discussion on various scheduling policies based on our framework is given in Appendix \ref{discu}.
\subsubsection{Applications} In the following, we briefly present two fundamental queueing models described as PH-NNRWQP obeying the splitting scheme in \eqref{split}, and for which, there are no analytical results in the related literature.
\paragraph{Limited Discrete-time Generalized Processor Sharing queues (LDGPS)} Limited processor sharing (LPS) queues arise naturally when the server's capacity is equally shared to a limited number of jobs. Despite their numerous applications, there are very few studies even on the continuous time LPS queues, referring mainly to a single queue systems \cite{zwart,zwart1}. An extension to a layered queueing network was recently given in \cite{ave}; for other applications in communication systems see e.g., \cite{telek,guptamor}. A quite flexible scheduling discipline for systems that handle several classes of jobs is Generalized
Processor Sharing (GPS). Under GPS, each class is assigned a weight, and the available capacity of the server is proportionally divided amongst the backlogged classes. Our discrete time modelling framework can be seen as a practical implementation that solves the infinite divisibility assumption of continuous-time GPS.

The LDGPS system operates as follows: up to $N_1$ jobs of class 1 and up to $N_2$ jobs of class 2 are allowed to share the processor; the remaining jobs (if any) wait in their
respective queues. Instead of a weight that corresponds to the allocation of processor capacity, each class of jobs is assigned a probability that may depend in an
arbitrary way on the numbers of class 1 and class 2 jobs sharing the processor,
but not on the total number waiting in the system. In such a setting, if packets of both classes are backlogged, the oldest packet
of class 1 is served with probability $\beta_{1}(n_{1},n_{2})$ whereas with probability $\beta_{2}(n_{1},n_{2})=1-\beta_{1}(n_{1},n_{2})$ the oldest packet of class 2 is served. The LDGPS is a generalization of the model introduced recently in \cite{walr}.

Our modelling framework incorporates also the concept of state-dependent impatience, by allowing waiting jobs to depart from the network without service, with a probability that depends on the number of waiting jobs. Such a situation arises when we investigate the flow-level performance in wireless networks with users' mobility; e.g. \cite{bonald,sima,bor2}. Moreover, by introducing the concept of impatience we may have a bounded number of departures, resulting in a non-neighbour PH-RWQP; for more details see subsection \ref{appl2}. To our best knowledge, no analytical results are available in the literature for LDGPS systems with/without impatience.
\paragraph{Queue-based two-node random access (RA) networks}
The ever increasing need for massive uncoordinated access, due to the huge number of communicating devices
in 5G networks, has increased the interest
on RA protocols \cite{metis}. In these networks, there is no centralized access control, and the nodes operate autonomously by sharing the medium in a distributed fashion. Since any node decides individually to transmit, there exists a high level of interaction that affects the successful transmission probabilities, i.e., concurrent transmissions result in collisions. Such an interaction gives rise to quite challenging mathematical problems even for very simple networks \cite{ephr}. 

Our modelling framework can be used to investigate the delay analysis of slotted time queue-based two-node RA networks (see Fig. \ref{fig1w}), and may serve as a macroscopic model and as a building block to investigate large-scale networks. In such a network, users communicate in RA manner with a common destination node (node D) as shown in Figure \ref{fig1w}. 
\begin{figure}[htp]
\centering
\includegraphics[scale=0.3]{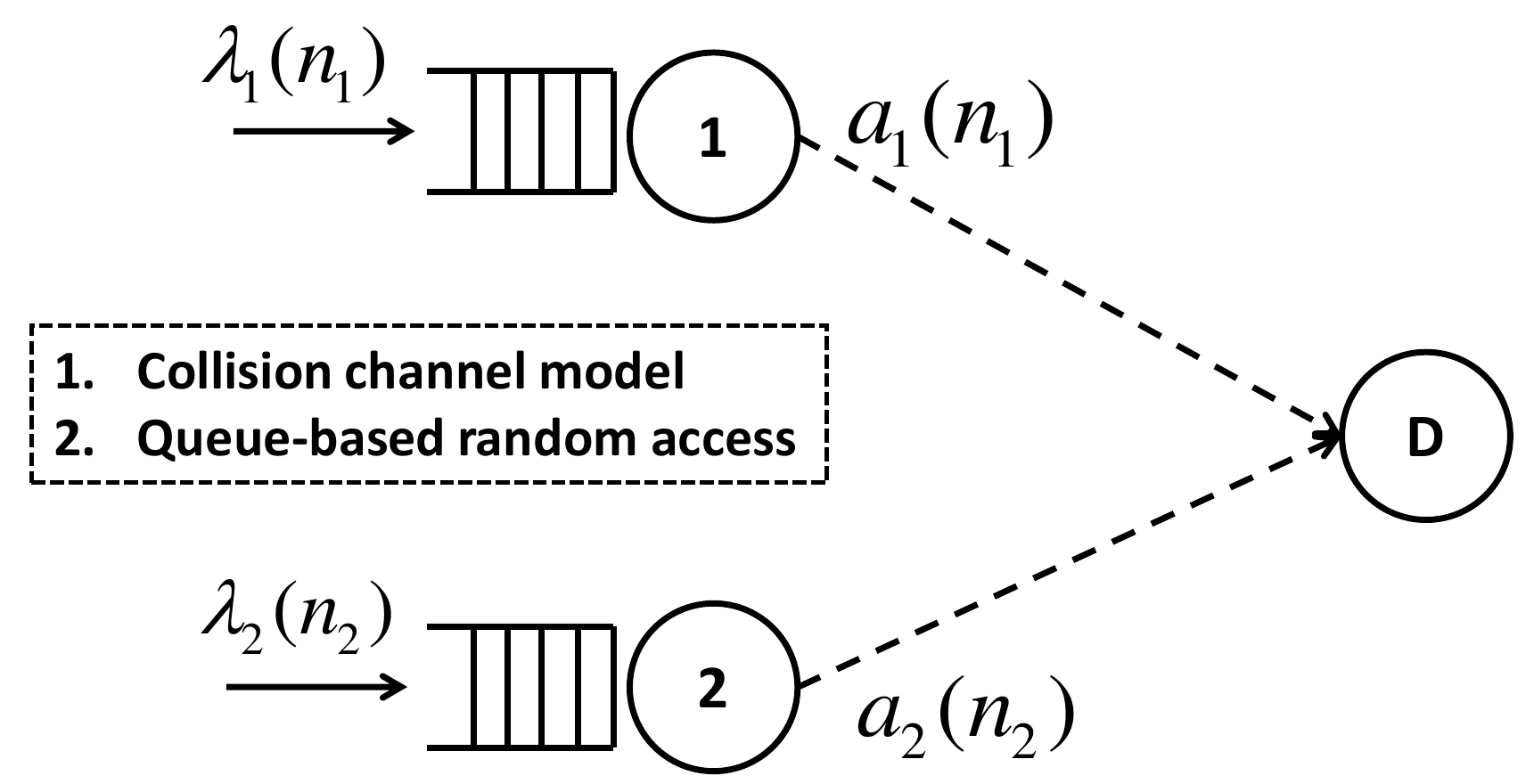}
\caption{The queue-based random access network.}
\label{fig1w}
\end{figure}
Note that packet generation (i.e., packet arrivals) and/or packet transmission (i.e., packet transmissions to the node D) parameters depend on the user state (i.e., backlogged packets at user's buffer). However, since user nodes have limited memory and capacity, they adapt their parameters based on an upper level of their buffer occupancy rather than on all the backlogged packets; i.e., a limited state dependent dynamic transmission/packet generation policy is employed. For more details see subsection \ref{appl1}.

The paper is organized as follows. In Section \ref{mod}, we present the mathematical model. Our main results are given in Section \ref{pre} by focusing on the stationary analysis of a general PH-NNRWQP. Section \ref{appl} is devoted to two novel queueing models that are analysed based on our framework, i.e., the \textit{two-queue LDGPS with impatience}; see subsection \ref{appl2}, and the \textit{two-node queue-based RA network}; see subsection \ref{appl1}. A simple numerical illustration along with some details on the numerical implementation are given in Section \ref{num}. 
\section{Model description}\label{mod}
We consider a partially homogeneous two-dimensional Markov chain $\mathbf{Q}=\{(Q_{1,n},Q_{2,n});n=0,1,\ldots\}$, with state space $S=\mathbb{Z}_{+}^{2}$. For the complete description of the structure of $\mathbf{Q}$ we need the following assumptions:
\begin{itemize}
\item There exist two positive constants, say $N_{1}$, $N_{2}$, such that $S=S_{a}\cup S_{b}\cup S_{c}\cup S_{d}$, where the non-intersecting sets $S_{j}$, $j=a,b,c,d$ are given in (\ref{split}).
\item For $j=a,b,c,d$, let the sequence of independent stochastic vectors $\{(\xi_{1n}^{(j)}(Q_{1,n},Q_{2,n}),\xi_{2n}^{(j)}(Q_{1,n},Q_{2,n})),n=0,1,\ldots\}$, with range space $\{-1,0,1\}\times\{-1,0,1\}$ (in subsection \ref{appl2} we relax this assumption). Their distribution depends on the state of $\mathbf{Q}$ according to the state-space splitting rule, as shown in Figure \ref{fig10}.
\item The family $\{(\xi_{1n}^{(d)}(Q_{1,n},Q_{2,n}),\xi_{2n}^{(d)}(Q_{1,n},Q_{2,n})),n=0,1,\ldots\}$ is a sequence of i.i.d. stochastic vectors. Moreover, $(\xi_{1n}^{(d)}(Q_{1,n},Q_{2,n}),\xi_{2n}^{(d)}(Q_{1,n},Q_{2,n}))\sim(\xi_{1n}^{(d)},\xi_{2n}^{(d)})$, for $(Q_{1,n},Q_{2,n})\in S_{d}$. 
\item The four families $\{(\xi_{1n}^{(j)}(Q_{1,n},Q_{2,n}),\xi_{2n}^{(j)}(Q_{1,n},Q_{2,n}))\}$, $j=a,b,c,d$ are independent, and \\$(\xi_{1n}^{(j)}(Q_{1,n},Q_{2,n}),\xi_{2n}^{(j)}(Q_{1,n},Q_{2,n}))\sim(\xi_{1}^{(j)}(Q_{1},Q_{2}),\xi_{2}^{(j)}(Q_{1},Q_{2}))$. 
\end{itemize}
Then, for $n=0,1,\ldots$, $k=1,2,$ and $(Q_{1,0},Q_{2,0})=(0,0)$
\begin{displaymath}
Q_{k,n+1}=[Q_{k,n}+\xi_{n}^{(j)}(Q_{1,n},Q_{2,n})]^{+},
\end{displaymath}
where $[a]^{+}=max(0,a)$. Conditions for ergodicity for such two-dimensional random walks, have been investigated in \cite[Theorem 3.1, p. 178]{faysta}, \cite[Theorem 4]{za}, as well as in \cite[Chapters 6-8]{boro}. More precisely, in \cite{faysta}, using the notion of induced Markov chains \cite{fay}, we realize that for $Q_{1,n}>N_{1}$ (resp. $Q_{2,n}>N_{2}$) the component $Q_{2,n}$ (resp. $Q_{1,n}$) evolves as a one-dimensional random walk (RW). Denote its corresponding stationary distribution by $\psi:=(\psi_{0},\psi_{1},...)$ (resp. $\varphi:=(\varphi_{0},\varphi_{1},...)$). Consider the mean drifts 
\begin{displaymath}
\begin{array}{rl}
E^{(n_{2})}_{x}:=&\mathbb{E}(Q_{1,n+1}-Q_{1,n}|\mathbf{Q}=(n_{1},n_{2})),\,n_{1}\geq N_{1},n_{2}\geq 0,\\
E^{(n_{1})}_{y}:=&\mathbb{E}(Q_{2,n+1}-Q_{2,n}|\mathbf{Q}=(n_{1},n_{2})),\,n_{1}\geq 0,n_{2}\geq N_{2}.
\end{array}
\end{displaymath}
Note that for $(n_{1},n_{2})\in S_{d}$,
\begin{displaymath}
\begin{array}{rl}
E^{(n_{2})}_{x}:=&E_{x}=p_{1,0}+p_{1,1}+p_{1,-1}-(p_{-1,1}+p_{-1,0}+p_{-1,-1}),\\
E^{(n_{1})}_{y}:=&E_{y}=p_{0,1}+p_{1,1}+p_{-1,1}-(p_{-1,-1}+p_{0,-1}+p_{1,-1}).
\end{array}
\end{displaymath}
Let also $\rho_{1}=\sum_{n_{2}=0}^{\infty}\psi_{n_{2}}E_{x}^{(n_{2})}$, $\rho_{2}=\sum_{n_{1}=0}^{\infty}\varphi_{n_{1}}E_{y}^{(n_{1})}$. Then, the following theorem provides the stability conditions of $\textbf{Q}$. 
\begin{theorem}\label{faystab}
(\cite[Theorem 3.1]{faysta}) \begin{itemize}
\item If $E_{x}<0$, $E_{y}<0$, then, $\textbf{Q}$ is ergodic if $a)$ $\rho_{1}<0$, and $\rho_{2}<0$, $b)$ is transient if $\rho_{1}>0$, or $\rho_{2}>0$.
\item If $E_{x}\geq 0$, $E_{y}<0$, then, $\textbf{Q}$ is ergodic if $a)$ $\rho_{1}<0$, $b)$ is transient if $\rho_{1}>0$. It is also transient if $E_{x}>0$ and $\rho_{1}=0$.
\item If $E_{x}< 0$, $E_{y}\geq 0$, then, $\textbf{Q}$ is ergodic if $a)$ $\rho_{2}<0$, $b)$ is transient if $\rho_{2}>0$. It is also transient if $E_{y}>0$ and $\rho_{2}=0$.
\item If $E_{x}\geq 0$, $E_{y}\geq 0$, then, $\textbf{Q}$ is transient.
\end{itemize}
\end{theorem}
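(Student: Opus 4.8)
The plan is to combine the theory of induced (associated) Markov chains with Foster--Lyapunov drift criteria, which is the natural machinery for walks that are spatially homogeneous on the interior block $S_{d}$. The first step is to make rigorous the reduction indicated in the excerpt: whenever $Q_{1,n}>N_{1}$, the partial-homogeneity relation (\ref{trans}) forces the vertical increments to depend only on $Q_{2,n}$, so the second coordinate evolves as an autonomous one-dimensional random walk on $\mathbb{Z}_{+}$ with drift $E_{y}$ for $n_{2}\geq N_{2}$. When $E_{y}<0$ this induced walk is positive recurrent by the one-dimensional Foster criterion, so its stationary law $\psi$, and hence $\rho_{1}=\sum_{n_{2}}\psi_{n_{2}}E_{x}^{(n_{2})}$, are well defined; symmetrically $E_{x}<0$ produces $\varphi$ and $\rho_{2}$. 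This is precisely why $\rho_{1}$ is meaningful only when $E_{y}<0$ and $\rho_{2}$ only when $E_{x}<0$. It also disposes of the last case at once: if $E_{x}\geq 0$ and $E_{y}\geq 0$ the interior drift points outward in both coordinates, neither induced chain is positive recurrent, and a bounded geometric test function $r_{1}^{n_{1}}r_{2}^{n_{2}}$ with suitable $r_{1},r_{2}\in(0,1)$ is superharmonic off a finite set, giving transience.

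For the symmetric regime $E_{x}<0,\,E_{y}<0$ both $\rho_{1},\rho_{2}$ exist and should be read as the \emph{effective} drifts of $Q_{1,n}$ and $Q_{2,n}$ seen after the transverse coordinate relaxes to its conditional stationary law. To prove ergodicity under $\rho_{1}<0,\,\rho_{2}<0$ I would verify $\mathbb{E}[V(\mathbf{Q}_{n+1})-V(\mathbf{Q}_{n})\mid\mathbf{Q}_{n}=(n_{1},n_{2})]\leq-\varepsilon$ outside a finite set for a suitable $V$. A bare linear function fails, because in the horizontal strip $S_{b}$ the instantaneous horizontal drift is the state-dependent $E_{x}^{(n_{2})}$, not its average $\rho_{1}$. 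The device that fixes this is a corrector: let $h(n_{2})$ solve the Poisson equation $\mathbb{E}[h(Q_{2,n+1})-h(Q_{2,n})\mid Q_{2,n}=n_{2}]=\rho_{1}-E_{x}^{(n_{2})}$ for the induced vertical chain, and set $V(n_{1},n_{2})=n_{1}+h(n_{2})$ on $S_{b}\cup S_{d}$; then by linearity of expectation the one-step drift of $V$ collapses to $\rho_{1}<0$ up to controlled boundary terms. Gluing this with the mirror construction $n_{2}+g(n_{1})$ on $S_{c}\cup S_{d}$ and with an interior function whose drift is $E_{x}+E_{y}<0$ yields a global Lyapunov function. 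Transience when $\rho_{1}>0$ (or $\rho_{2}>0$) follows by running the same corrector argument inside a bounded geometric function of $n_{1}$: positive recurrence of the induced vertical chain together with a positive averaged horizontal drift forces $Q_{1,n}\to\infty$.

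The two asymmetric cases use the same template with a single induced chain. For $E_{x}\geq 0,\,E_{y}<0$ only $\psi$, $h$ and $\rho_{1}$ are available; the walk cannot form a stable horizontal profile while $Q_{2,n}$ is large, so its only route to stability is that horizontal escape is reversed once $Q_{1,n}\geq N_{1}$, where relaxation to $\psi$ turns the effective horizontal drift into $\rho_{1}$. Hence ergodicity is equivalent to $\rho_{1}<0$ and transience to $\rho_{1}>0$, while the borderline subcase $E_{x}>0,\,\rho_{1}=0$ requires a sharper, second-moment test function to show that a merely vanishing average drift, sustained against a strictly positive interior drift, is still insufficient for recurrence and yields transience. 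The main obstacle throughout is the corrector step in the strips: one must show the Poisson equation for the induced chain admits a solution $h$ of controlled growth, bound the discrepancy between $E_{x}^{(n_{2})}$ and $\rho_{1}$ uniformly over the strip, and finally reconcile the three regional Lyapunov functions across the interfaces $n_{1}=N_{1}$ and $n_{2}=N_{2}$ without destroying the sign of the drift. This is exactly where the positive recurrence and summable tails of the induced chains are indispensable.
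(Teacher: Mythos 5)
Note first that the paper you are being compared against contains no proof of this statement at all: Theorem \ref{faystab} is imported verbatim from \cite[Theorem 3.1]{faysta}, and the author later remarks (just before Theorem \ref{ergth}) that proofs of this type are "identical to those in" that reference. So the only meaningful comparison is with the method of the cited source, which is indeed the one you outline: reduction to induced one-dimensional Markov chains on the strips (the distributions $\psi$ and $\varphi$), averaging the transverse drift against their stationary laws to get $\rho_{1},\rho_{2}$, and then Foster--Lyapunov test functions (Fayolle works with quadratic forms; Fayolle--Malyshev--Menshikov with the "second vector field"). Your observations that $\rho_{1}$ is only meaningful when $E_{y}<0$, and $\rho_{2}$ only when $E_{x}<0$, are faithful to that scheme.

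However, your sketch has concrete gaps where the cited proof does its real work. (a) In the fourth case you claim $r_{1}^{n_{1}}r_{2}^{n_{2}}$ is superharmonic "off a finite set"; this fails twice over. The complement of the homogeneous region $S_{d}$ is not finite --- it contains the infinite strips $S_{b}$ and $S_{c}$, where the one-step laws $p_{i,j}(N_{1},n_{2})$, $p_{i,j}(n_{1},N_{2})$ are arbitrary and may have strongly inward drifts, so no estimate established only on $S_{d}$ yields superharmonicity outside a finite set. Worse, when $E_{x}=0$ (allowed under $E_{x}\geq 0$), Jensen's inequality gives $\mathbb{E}(r^{\xi_{1}})\geq r^{\mathbb{E}(\xi_{1})}=1$ for every $r\in(0,1)$, i.e.\ the geometric function is \emph{sub}harmonic in the interior, so your test function cannot work at zero drift; the case $E_{x}=E_{y}=0$ is exactly the delicate one, and the transience claim there rests on the strip dynamics, not on an interior estimate. (b) The corrector step is asserted rather than proved: you must show the Poisson equation for the induced chain has a solution $h$ with bounded increments and at most linear growth (this is where geometric ergodicity of the induced chain enters), and then verify that the glued function $V(n_{1},n_{2})=n_{1}+h(n_{2})$ is bounded below with finite sublevel sets --- if $h$ decreases linearly this can fail and $V$ must be modified --- while the interface terms at $n_{1}=N_{1}$, $n_{2}=N_{2}$ stay bounded (here the nearest-neighbour assumption is what makes them finite). (c) Transience under $\rho_{1}>0$ is dispatched in one sentence; turning "positive averaged drift forces $Q_{1,n}\to\infty$" into a proof requires the exponential supermartingale with the same corrector and, again, control on the strips. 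None of these are fatal to the strategy --- it is the standard one --- but as written they are placeholders for the substance of the theorem.
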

\section{Analysis}\label{pre}
Assume here on that for the Markov chain $\textbf{Q}$ is ergodic and there exists a unique stationary probability distribution,
\begin{displaymath}\begin{array}{c}
\pi(n_{1},n_{2})=\lim_{n\to\infty}P(Q_{1,n}=n_{1},Q_{2,n}=n_{2}).
\end{array}
\end{displaymath}
Then, for $\underline{n}=(n_{1},n_{2})\in S$, the equilibrium equations read
\begin{equation}
\begin{array}{l}
\pi(\underline{n})=\pi(\underline{n})p_{0,0}(\underline{n})+\pi(\underline{n}+\underline{1}_{2})p_{0,-1}(\underline{n}+\underline{1}_{2})
+\pi(\underline{n}+\underline{1}_{1})p_{-1,0}(\underline{n}+\underline{1}_{1})\\+\pi(\underline{n}-\underline{1}_{1}+\underline{1}_{2})p_{1,-1}(\underline{n}-\underline{1}_{1}+\underline{1}_{2})\mathbf{1}_{\{n_{1}\geq1\}}\\
+\pi(\underline{n}+\underline{1}_{1}-\underline{1}_{2})p_{-1,1}(\underline{n}+\underline{1}_{1}-\underline{1}_{2})\mathbf{1}_{\{n_{2}\geq1\}}+\pi(\underline{n}-\underline{1}_{2})p_{0,1}(\underline{n}-\underline{1}_{2})\mathbf{1}_{\{n_{2}\geq1\}}\\
+\pi(\underline{n}+\underline{1}_{1}+\underline{1}_{2})p_{-1,-1}(\underline{n}+\underline{1}_{1}+\underline{1}_{2})+\pi(\underline{n}-\underline{1}_{1})p_{1,0}(\underline{n}-\underline{1}_{1})\mathbf{1}_{\{n_{1}\geq1\}}\\+\pi(\underline{n}-\underline{1}_{1}-\underline{1}_{2})p_{1,1}(\underline{n}-\underline{1}_{1}-\underline{1}_{2})\mathbf{1}_{\{n_{1},n_{2}\geq1\}},
\end{array}\label{e1}
\end{equation}
where $\sum_{n_{1}=0}^{\infty}\sum_{n_{2}=0}^{\infty}\pi(n_{1},n_{2})=1$, $\pi(n_{1},-1)=0=\pi(-1,n_{2})$, $\mathbf{1}_{\{A\}}$ is the indicator function of the event $A$, and $\underline{1}_{i}$, $i=1,2,$ is the $1\times 2$ row vector with 1 in the $i$-th place, and zero elsewhere. 

In order to help the reader to follow the approach, we briefly present its basic steps, by investigating each subregion separately.
\begin{enumerate}
\item[Step 1] Our primary aim is to express all the unknown equilibrium probabilities in terms of the probabilities associated with region $S_{a}$, and $\pi(N_{1},N_{2})$. There are $N_{1}\times N_{2}$ equilibrium equations that refer to $S_{a}$. These equations contain $(N_{1}+1)\times(N_{2}+1)$ unknown probabilities: $N_{1}\times N_{2}$ of them refer to states that belong to $S_{a}$ (i.e., $\pi(n_{1},n_{2})$, $n_{1}=0,1,\ldots,N_{1}-1$, $n_{1}=0,1,\ldots,N_{1}-1$), and $N_{1}+N_{2}+1$ refer to states that belong to the boundary of $S_{b}$ with $S_{a}$ (i.e., $\pi(N_{1},n_{2})$, $n_{2}=0,1,\ldots,N_{2}-1$), to the boundary of $S_{c}$ with $S_{a}$ (i.e., $\pi(n_{1},N_{2})$, $n_{1}=0,1,\ldots,N_{1}-1$), and to the boundary of $S_{d}$ with $S_{a}$ (i.e., $\pi(N_{1},N_{2})$).
\item[Step 2] For the equilibrium equations that refer to $S_{b}$ (i.e., $n_{1}=N_{1},N_{1}+1,\ldots$, $n_{2}=0,1,\ldots N_{2}-1$), and to $S_{c}$ ((i.e., $n_{2}=N_{2},N_{2}+1,\ldots$, $n_{1}=0,1,\ldots N_{1}-1$)), we apply the generating function technique. Such a procedure leads to two matrix equations (see \eqref{sys1}, \eqref{sys2} below). The solution of \eqref{sys1} provides the pgf of $\pi(n_{1},n_{2})$, $n_{1}=N_{1},N_{1}+1,\ldots$, $n_{2}=1,\ldots,N_{2}$ (i.e., the function $g_{n_{2}}(x)$; see \eqref{r2}) in terms of:
\begin{itemize}
\item[-] the pgf of $\pi(n_{1},0)$ (i.e., the function $g_{0}(x)$), $n_{1}=N_{1},N_{1}+1,\ldots$,
\item[-] the probabilities $\pi(N_{1}-1,n_{2})$, $\pi(N_{1},n_{2})$, $n_{2}=0,1,\ldots,N_{2}$. Note that these unknown probabilities are a subset of the unknown probabilities mentioned in step 1.
\end{itemize}
Similarly, the solution of \eqref{sys2} provides the pgf of $\pi(n_{1},n_{2})$, $n_{2}=N_{2},N_{2}+1,\ldots$, $n_{1}=1,\ldots,N_{1}$ (i.e., the function $h_{n_{1}}(y)$; see \eqref{r3}) in terms of:
\begin{itemize}
\item[-] the pgf of $\pi(0,n_{2})$,  (i.e., the function $h_{0}(y)$) $n_{2}=N_{2},N_{2}+1,\ldots$,
\item[-] the probabilities $\pi(n_{1}, N_{2}-1)$, $\pi(n_{1},N_{2})$, $n_{1}=0,1,\ldots,N_{1}$, which are also a subset of the unknown probabilities mentioned in step 1.
\end{itemize}
\item[Step 3] We apply the generating function approach for the equilibrium equations referring to subregion $S_{d}$. Then, after some heavy, but straightforward, algebra, we come up with a functional equation, the solution of which, provides the pgf of the joint equilibrium probabilities for the states in $S_{d}$.  However, this equation is written in terms of $g_{N_{2}}(x)$, $g_{N_{2}-1}(x)$, $h_{N_{1}}(x)$, $h_{N_{1}-1}(x)$. Substituting the results obtained in step 2, we come up with the functional equation \eqref{fun}, which now contains the two unknown functions $g_{0}(x)$, $h_{0}(y)$ mentioned in step 2. The theory of boundary value problems is the tool to obtain $g_{0}(x)$, $h_{0}(y)$, and as a consequence, to solve \eqref{fun}. Such a solution is obtained in terms of the unknown probabilities mentioned in step 1. Having obtained $g_{0}(x)$, $h_{0}(y)$, and using the normalizing condition, we obtain the additional $N_{1}+N_{2}+1$ equations for the corresponding unknown probabilities, in terms of the derivatives of $g_{0}(x)$, $h_{0}(y)$ with respect to $x$, $y$ at points $x=0$, $y=0$, respectively. 

Then, going back to step 1, we solve an $(N_{1}+1)\times(N_{2}+1)$ system of independent linear equations to obtain $\pi(n_{1},n_{2})$, $n_{1}=0,1,\ldots,N_{1}$, $n_{2}=0,1,\ldots,N_{2}$. With this result, the pgfs of the equilibrium probabilities in subregions $S_{b}$, $S_{c}$, $S_{d}$ are known.  
\item[Step 4] Having obtained the equilibrium probabilities associated with subregion $S_{a}$, and the pgfs that refer to the the equilibrium probabilities in subregions $S_{b}$, $S_{c}$, $S_{d}$, we can obtain various stationary metrics, such as the stationary expected values of $\mathbf{Q}$.
\end{enumerate}  
\subsection{Generating functions and the functional equation}\label{gen}
We proceed with the generating function approach by focusing at each sub-region of the state space separately.
\begin{enumerate}
\item \textbf{Region $S_{a}$:} Consider the equilibrium equations (\ref{e1}) corresponding to the region $S_{a}$. There are $N_{1}\times N_{2}$ equations ($n_{1}=0,1,...,N_{1}-1$, $n_{2}=0,1,...,N_{2}-1$) involving $(N_{1}+1)\times(N_{2}+1)$ unknown probabilities ($\pi(n_{1},n_{2})$, $n_{1}=0,1,...,N_{1}$, $n_{2}=0,1,...,N_{2}$). This leaves $N_{1}+N_{2}+1$ unknowns, and as mentioned above, we seek to find additional equations for these unknowns. The following procedure helps us to identify these equations. Then, along with the equilibrium equations that are associated with $S_{a}$, an independent linear system of $(N_{1}+1)\times(N_{2}+1)$ equations will be constructed. 
\item \textbf{Region $S_{b}$:} We now focus on the equations related with the region $S_{b}$ ($n_{1}=N_{1},N_{1}+1,...$, $n_{2}=0,1,...N_{2}-1$). Let
\begin{displaymath}
\begin{array}{c}
g_{n_{2}}(x)=\sum_{n_{1}=N_{1}}^{\infty}\pi(n_{1},n_{2})x^{n_{1}-N_{1}},\,n_{2}=0,1,...,\,|x|\leq1.
\end{array}
\end{displaymath}
Having in mind that $p_{i,j}(n_{1},n_{2})=p_{i,j}(N_{1},n_{2})$ for $n_{1}\geq N_{1}$, we obtain from (\ref{e1}) the following relations,
\begin{equation}
\begin{cases}\begin{array}{l}
f_{2}(N_{1},0,x)g_{0}(x)-f_{3}(N_{1},1,x)g_{1}(x)=b_{0}(x),\vspace{2mm}\\
-f_{1}(N_{1},n_{2}-1,x)g_{n_{2}-1}(x)+f_{2}(N_{1},n_{2},x)g_{n_{2}}(x)\\-f_{3}(N_{1},n_{2}+1,x)g_{n_{2}+1}(x)=b_{n_{2}}(x),\,n_{2}=1,2,...,
\end{array}\end{cases}
\label{r1}
\end{equation} 
where for $n_{2}=0,1,2,...$,
\begin{displaymath}
\begin{array}{rl}
f_{1}(N_{1},n_{2},x)=&x^{2}p_{1,1}(N_{1},n_{2})+xp_{0,1}(N_{1},n_{2})+p_{-1,1}(N_{1},n_{2}),\vspace{2mm}\\
f_{2}(N_{1},n_{2},x)=&x[1-p_{0,0}(N_{1},n_{2})]-x^{2}p_{1,0}(N_{1},n_{2})-p_{-1,0}(N_{1},n_{2}),\vspace{2mm}\\
f_{3}(N_{1},n_{2},x)=&p_{-1,-1}(N_{1},n_{2})+xp_{0,-1}(N_{1},n_{2})+x^{2}p_{1,-1}(N_{1},n_{2}),\end{array}
\end{displaymath}
\begin{displaymath}
\begin{array}{l}
b_{n_{2}}(x)=x[\pi(N_{1}-1,n_{2}-1)p_{1,1}(N_{1}-1,n_{2}-1)\\+\pi(N_{1}-1,n_{2}+1)p_{1,-1}(N_{1}-1,n_{2}+1)
+\pi(N_{1}-1,n_{2})p_{1,0}(N_{1}-1,n_{2})]\vspace{2mm}\\-[\pi(N_{1},n_{2}-1)p_{-1,1}(N_{1},n_{2}-1)+\pi(N_{1},n_{2})p_{-1,0}(N_{1},n_{2})\vspace{2mm}\\+\pi(N_{1},n_{2}+1)p_{-1,-1}(N_{1},n_{2}+1)].
\end{array}
\end{displaymath}

Relations (\ref{r1}) allow to explicitly express $g_{n_{2}}(x)$, $n_{2}=1,2,...$, in terms of $g_{0}(x)$ and $b_{0}(x),...,b_{n_{2}-1}(x)$. More precisely, we straightforwardly express $g_{n_{2}}(x)$ in terms of $g_{0}(x)$, and the unknown probabilities $\pi(N_{1}-1,n_{2})$, $\pi(N_{1},n_{2})$ for $n_{2}=0,1,\ldots,N$. Indeed, let
\begin{displaymath}
\begin{array}{rl}
\textbf{l}(x):=&(g_{1}(x),\ldots,g_{N_{2}}(x))^{\prime},\\
\textbf{b}(x):=&(b_{0}(x),\ldots,b_{N_{2}-1}(x))^{\prime},\\
\textbf{c}_{1}(x):=&(-f_{2}(N_{1},0,x),f_{1}(N_{1},0,x),0,\ldots,0)^{\prime},
\end{array}
\end{displaymath}
where $\mathbf{x}^{\prime}$ denotes the transpose of the vector $\mathbf{x}$. Then, the system of equations (\ref{r1}) is written in matrix form as
\begin{equation}
\begin{array}{c}
\textbf{K}(x)\textbf{l}(x)=\textbf{c}_{1}(x)g_{0}(x)+\textbf{b}(x),\,|x|\leq1,
\end{array}
\label{sys1}
\end{equation}
where $\textbf{K}(x):=(k_{i,j}(x))$ is a $N_{2}\times N_{2}$ matrix with elements
\begin{displaymath}
\begin{array}{rl}
k_{i,j}(x)=&\left\{\begin{array}{ll}
-f_{3}(N_{1},i,x),&i=j,\\
f_{2}(N_{1},i-1,x),&i=j+1,\\
-f_{1}(N_{1},j,x),&i=j+2.\\
\end{array}\right.
\end{array}
\end{displaymath}

Note that system (\ref{sys1}) is non-singular, since $\textbf{K}(x)$ is lower triangular, having determinant equal to $det(\textbf{K}(x))=(-1)^{N_{2}}\prod_{n_{2}=1}^{N_{2}}f_{3}(N_{1},n_{2},x)$ for $|x|\leq 1$ (Moreover, the roots of $f_{3}(N_{1},n_{2},x)=0$, $|x|\leq 1$ are the poles of $g_{n_{2}}(x)$, $n_{2}=1,\ldots,N_{2}$). Furthermore, note that $\textbf{K}(x)$ and $\textbf{c}_{1}(x)$ contain known elements (i.e., their elements are polynomials whose coefficients are the one-step transition probabilities). On the other hand, the elements of $\textbf{b}(x)$ contain the unknown probabilities $\pi(N_{1}-1,n_{2})$, $\pi(N_{1},n_{2})$, $n_{2}=0,1,\ldots,N_{2}$. Applying the Cramer's rule yields,
\begin{displaymath}
\begin{array}{c}
g_{n_{2}}(x)=\frac{det(\textbf{K}^{(n_{2})}(x))}{det(\textbf{K}(x))},\,n_{2}=1,\ldots,N_{2},
\end{array}
\end{displaymath}
where $\textbf{K}^{(n_{2})}(x)$, is obtained from the matrix $\textbf{K}(x)$ by replacing $n_{2}-$th column by the vector of free terms $\textbf{c}(x)g_{0}(x)+\textbf{b}(x)$. Moreover, $\textbf{K}^{(n_{2})}(x)=\textbf{K}_{1}^{(n_{2})}(x)g_{0}(x)+\textbf{K}_{2}^{(n_{2})}(x)$, where $\textbf{K}_{1}^{(n_{2})}(x)$, $\textbf{K}_{2}^{(n_{2})}(x)$, are obtained from $\textbf{K}^{(n_{2})}(x)$ by replacing its $n_{2}$-th column with the vector $\textbf{c}(x)$ and $\textbf{b}(x)$, respectively. Therefore, it is straightforward to show that for $n_{2}=1,2,...,N_{2},$
\begin{equation}
\begin{array}{rl}
g_{n_{2}}(x)=&\frac{det(\textbf{K}_{1}^{(n_{2})}(x))g_{0}(x)+det(\textbf{K}_{2}^{(n_{2})}(x))}{det(\textbf{K}(x))}\\
=&e_{n_{2}}(x)g_{0}(x)+t_{n_{2}}(x),\,|x|\leq1,
\end{array}\label{r2}
\end{equation}
where $e_{n_{2}}(x)$, $n_{2}=1,2,...,N_{2},$ are known functions of $x$ containing one-step transition probabilities, while $t_{n_{2}}(x)$, $n_{2}=1,2,...,N_{2},$ are also functions of $x$ containing the unknown probabilities mentioned above. 

Note that up to now, no other new unknown probabilities appear, except $\pi(N_{1}-1,n_{2})$, $\pi(N_{1},n_{2})$, $n_{2}=0,1,\ldots,N_{2}$.  
\item \textbf{Region $S_{c}$:} Clearly, region $S_{c}$ is a mirror image of $S_{b}$. Let,
\begin{displaymath}
\begin{array}{c}
h_{n_{1}}(y)=\sum_{n_{2}=N_{2}}^{\infty}\pi(n_{1},n_{2})y^{n_{2}-N_{2}},\,n_{1}=0,1,...,\,|y|\leq1.
\end{array}
\end{displaymath}
By repeating the procedure described above, we obtain 
\begin{equation}
\begin{cases}\begin{array}{l}
\tilde{f}_{2}(0,N_{2},y)h_{0}(y)-\tilde{f}_{3}(1,N_{2},y)h_{1}(y)=u_{0}(y),\vspace{2mm}\\
-\tilde{f}_{1}(n_{1}-1,N_{2},y)h_{n_{1}-1}(y)+\tilde{f}_{2}(n_{1},N_{2},y)h_{n_{1}}(y)\\-\tilde{f}_{3}(n_{1}+1,N_{2}y)h_{n_{1}+1}(y)=u_{n_{2}}(y),\,n_{1}=1,2,...,
\end{array}\end{cases}
\label{r11}
\end{equation} 
where for $n_{1}=0,1,2,...$,
\begin{displaymath}
\begin{array}{rl}
\tilde{f}_{1}(n_{1},N_{2},y)=&y^{2}p_{1,1}(n_{1},N_{2})+yp_{1,0}(n_{1},N_{2})+p_{1,-1}(n_{1},N_{2}),\vspace{2mm}\\
\tilde{f}_{2}(n_{1},N_{2},y)=&y[1-p_{0,0}(n_{1},N_{2})]-y^{2}p_{0,1}(n_{1},N_{2})-p_{0,-1}(n_{1},N_{2}),\vspace{2mm}\\
\tilde{f}_{3}(n_{1},N_{2},y)=&y^{2}p_{-1,1}(n_{1},N_{2})+yp_{-1,0}(n_{1},N_{2})+p_{-1,-1}(n_{1},N_{2}),
\end{array}
\end{displaymath}
\begin{displaymath}
\begin{array}{l}
u_{n_{1}}(y)=y[\pi(n_{1}-1,N_{2}-1)p_{1,1}(n_{1}-1,N_{2}-1)\\+\pi(n_{1}+1,N_{2}-1)p_{-1,1}(n_{1}+1,N_{2}-1)
+\pi(n_{1},N_{2}-1)p_{0,1}(n_{1},N_{2}-1)]\vspace{2mm}\\-[\pi(n_{1}-1,N_{2})p_{1,-1}(n_{1}-1,N_{2})+\pi(n_{1},N_{2})p_{0,-1}(n_{1},N_{2})\vspace{2mm}\\+\pi(n_{1}+1,N_{2})p_{-1,-1}(n_{1}+1,N_{2})].
\end{array}
\end{displaymath}
Similarly, we express $h_{n_{1}}(y)$ in terms of $h_{0}(y)$, and the unknown probabilities $\pi(n_{1},N_{2}-1)$, $\pi(n_{1},N_{2})$ for $n_{1}=0,1,\ldots,N_{1}$. Let
\begin{displaymath}
\begin{array}{rl}
\textbf{j}(y):=&(h_{1}(y),\ldots,h_{N_{1}}(y))^{\prime},\\
\textbf{u}(y):=&(u_{0}(y),\ldots,u_{N_{1}-1}(y))^{\prime},\\
\textbf{c}_{2}(y):=&(-\tilde{f}_{2}(0,N_{2},y),\tilde{f}_{1}(0,N_{2},y),0,\ldots,0)^{\prime}.
\end{array}
\end{displaymath}
Then, the system of equations (\ref{r11}) is written in matrix form as
\begin{equation}
\begin{array}{c}
\textbf{M}(y)\textbf{j}(y)=\textbf{c}_{2}(y)h_{0}(x)+\textbf{u}(y),\,|y|\leq1,
\end{array}
\label{sys2}
\end{equation}
where $\textbf{M}(y):=(m_{i,j}(y))$ is a $N_{1}\times N_{1}$ matrix with elements
\begin{displaymath}
\begin{array}{rl}
m_{i,j}(y)=&\left\{\begin{array}{ll}
-\tilde{f}_{3}(i,N_{2},y),&i=j,\\
\tilde{f}_{2}(j-1,N_{2},y),&j=i+1,\\
-\tilde{f}_{1}(j,N_{2},x),&j=i+2.\\
\end{array}\right.
\end{array}
\end{displaymath}
Note also that the elements of $\textbf{M}(y)$, and $\textbf{c}_{2}(y)$, are polynomials of the one-step transition probabilities, while the elements of $\textbf{u}(y)$ contain the unknown probabilities $\pi(n_{1},N_{2}-1)$, $\pi(n_{1},N_{2})$, for $n_{1}=0,1,\ldots,N_{1}$.

Since $det(\textbf{M}(y))=(-1)^{N_{1}}\prod_{n_{1}=1}^{N_{2}}\tilde{f}_{3}(n_{1},N_{2},y)\neq 0$, $|y|\leq 1$, by applying the Cramer's rule we have
\begin{displaymath}
\begin{array}{c}
h_{n_{1}}(y)=\frac{det(\textbf{M}^{(n_{1})}(y))}{det(\textbf{M}(y))},\,n_{1}=1,\ldots,N_{1},
\end{array}
\end{displaymath}
where $\textbf{M}^{(n_{1})}(y)$, is obtained from the matrix $\textbf{M}(y)$ by replacing the $n_{1}-$th column by the vector of free terms $\textbf{c}_{1}(y)h_{0}(y)+\textbf{u}(y)$. Moreover, $\textbf{M}^{(n_{1})}(y)=\textbf{M}_{1}^{(n_{1})}(y)h_{0}(y)+\textbf{M}_{2}^{(n_{1})}(y)$, where $\textbf{M}_{1}^{(n_{1})}(y)$, $\textbf{M}_{2}^{(n_{1})}(y)$ are obtained from $\textbf{M}^{(n_{1})}(y)$, by replacing its $n_{1}$-th column with the vector $\textbf{c}_{1}(y)$ and $\textbf{u}(y)$, respectively. Therefore, for $n_{1}=1,2,...,N_{1}$
\begin{equation}
\begin{array}{rl}
h_{n_{1}}(y)=&\frac{det(\textbf{M}_{1}^{(n_{1})}(y))h_{0}(y)+det(\textbf{M}_{2}^{(n_{1})}(y))}{det(\textbf{M}(y))}\\
=&\tilde{e}_{n_{1}}(y)h_{0}(y)+\tilde{t}_{n_{1}}(y),\,\,|y|\leq1,
\end{array}
\label{r3}
\end{equation}
where $\tilde{e}_{n_{1}}(y)$, $n_{1}=1,2,...,N_{1},$ are known functions of $y$ containing one-step transition probabilities, while $\tilde{t}_{n_{1}}(y)$, $n_{1}=1,2,...,N_{1},$ are also functions of $y$ containing the unknown probabilities mentioned above. 
Note that up to now, no other new unknown probabilities appear except $\pi(n_{1},N_{2}-1)$, $\pi(n_{1},N_{2})$ for $n_{1}=0,1,\ldots,N_{1}$.
\item \textbf{Region $S_{d}$:} Consider the region $S_{d}$, and let for $|x|\leq1$, $|y|\leq1$,
\begin{displaymath}
\begin{array}{rl}
g(x,y)=&\sum_{n_{1}=N_{2}}^{\infty}\sum_{n_{2}=N_{2}}^{\infty}\pi(n_{1},n_{2})x^{n_{1}-N_{1}}y^{n_{2}-N_{2}}\vspace{2mm}\\
=&\sum_{n_{2}=N_{2}}^{\infty}g_{n_{2}}(x)y^{n_{2}-N_{2}}=\sum_{n_{1}=N_{1}}^{\infty}h_{n_{1}}(y)x^{n_{1}-N_{1}}.
\end{array}
\end{displaymath}
Using the second equation in (\ref{r1}) and noting that $f_{i}(N_{1},n_{2},x):=f_{i}(N_{1},N_{2},x)$ for $n_{2}\geq N_{2}$, we have
\begin{displaymath}
\begin{array}{c}
-f_{1}(N_{1},N_{2}-1,x)g_{N_{2}-1}(x)+f_{2}(N_{1},N_{2},x)g_{N_{2}}(x)\\-f_{3}(N_{1},N_{2},x)g_{N_{2}+1}(x)=b_{N_{2}}(x),\vspace{2mm}\\
-f_{1}(N_{1},N_{2},x)g_{n_{2}-1}(x)+f_{2}(N_{1},N_{2},x)g_{n_{2}}(x)\\-f_{3}(N_{1},N_{2},x)g_{n_{2}+1}(x)=b_{n_{2}}(x),\,n_{2}\geq N_{2}+1.
\end{array}
\end{displaymath}
Multiplying with $y^{n_{2}-N_{2}}$, and summing for all $n_{2}=N_{2},N_{2}+1,\ldots$, we obtain after some algebra
\begin{equation}
\begin{array}{rl}
R(x,y)g(x,y)=&yf_{1}(N_{1},N_{2}-1,x)g_{N_{2}-1}(x)-f_{3}(N_{1},N_{2},x)g_{N_{2}}(x)\\
&+x\tilde{f}_{1}(N_{1}-1,N_{2},y)h_{N_{1}-1}(y)-\tilde{f}_{3}(N_{1},N_{2},y)h_{N_{1}}(y)\\
&+xyp_{1,1}(N_{1}-1,N_{2}-1)\pi(N_{1}-1,N_{2}-1)\\&-xp_{1,-1}(N_{1}-1,N_{2})\pi(N_{1}-1,N_{2})\\&-yp_{-1,1}(N_{1},N_{2}-1)\pi(N_{1},N_{2}-1)+p_{-1,-1}\pi(N_{1},N_{2}),
\end{array}\label{funnn}
\end{equation}
where $R(x,y)$ is given in \eqref{kern}. Substituting \eqref{r1} and \eqref{r2} in \eqref{funnn}, we finally obtain
\begin{equation}
R(x,y)g(x,y)=A(x,y)g_{0}(x)+B(x,y)h_{0}(y)+C(x,y),
\label{fun}
\end{equation}
where
\begin{equation}
R(x,y)=xy-\Psi(x,y),\label{kern}
\end{equation}
and
\begin{displaymath}
\begin{array}{rl}
\Psi(x,y)=&xyp_{0,0}+x^{2}yp_{1,0}+yp_{-1,0}+x^{2}y^{2}p_{1,1}+xy^{2}p_{0,1}\\&+y^{2}p_{-1,1}+xp_{0,-1}+x^{2}p_{1,-1}+p_{-1,-1},\\
A(x,y)=&yf_{1}(N_{1},N_{2}-1,x)e_{N_{2}-1}(x)-f_{3}(N_{1},N_{2},x)e_{N_{2}}(x),\vspace{2mm}\\
B(x,y)=&x\tilde{f}_{1}(N_{1}-1,N_{2},y)\tilde{e}_{N_{1}-1}(y)-\tilde{f}_{3}(N_{1},N_{2},y)\tilde{e}_{N_{1}}(y),\vspace{2mm}\\
C(x,y)=&K(\pi(N_{1}-1,N_{2}-1),\pi(N_{1}-1,N_{2}),\pi(N_{1},N_{2}-1),x,y)\\&+yf_{1}(N_{1},N_{2}-1,x)t_{N_{2}-1}(x)-f_{3}(N_{1},N_{2},x)t_{N_{2}}(x)\\&+x\tilde{f}_{1}(N_{1}-1,N_{2},y)\tilde{t}_{N_{1}-1}(y)-\tilde{f}_{3}(N_{1},N_{2},y)\tilde{t}_{N_{1}}(y),
\end{array}
\end{displaymath}
\begin{displaymath}
\begin{array}{l}
K(\pi(N_{1}-1,N_{2}-1),\pi(N_{1}-1,N_{2}),\pi(N_{1},N_{2}-1),\pi(N_{1},N_{2}),x,y)\\
=xy\pi(N_{1}-1,N_{2}-1)p_{1,1}(N_{1}-1,N_{2}-1)+p_{-1,-1}\pi(N_{1},N_{2})\\-y\pi(N_{1},N_{2}-1)p_{1,-1}(N_{1}-1,N_{2})
-x\pi(N_{1}-1,N_{2})p_{-1,1}(N_{1},N_{2}-1).
\end{array}
\end{displaymath}
\end{enumerate}
Note that $g(x,y)$ is regular in $y$ for $|y|<1$, and continuous in $y$ for $|y|\leq 1$, for any $x$ such that $|x|\leq 1$, and similarly with $x$, $y$ interchanged.

Our primary aim is to solve \eqref{fun}, and obtain $g(x,y)$. However, to accomplish this task, we first need to obtain the boundary functions $g_{0}(x)$, $h_{0}(y)$. Moreover, these unknown functions will be given in terms of the $N_{1}+N_{2}+1$ unknown probabilities: $\pi(N_{1},n_{2})$, $n_{2} =0, 1,...,N_{2} -1$, $\pi(n_{1},N_{2})$, $n_{1} =0, 1,...,N_{1} -1$ and $\pi(N_{1},N_{2})$. 
The theory of boundary value problems \cite{coh1,fay1} is the methodological tool to obtain $g_{0}(x)$, $h_{0}(y)$ (and as a consequence the function $g(x,y)$). Such an approach is quite lengthy and technical. The basic steps are briefly summarized below:
\begin{enumerate}
\item[Step 1:] Provide a thorough investigation of the zero-pairs of the kernel $R(x,y)$, i.e., the set $\mathcal{K}=\{(x,y):R(x,y)=0,|x|\leq 1,|y|\leq 1\}$. For such zero-pairs, due to the regularity properties of $g(x,y)$, the right hand side of \eqref{fun} also vanishes, and provides a relation between $g_{0}(x)$ and $h_{0}(y)$. In particular, we restrict the analysis of \eqref{fun} to a subset $\mathcal{S}_{1}\times\mathcal{S}_{2}\subset \mathcal{K}$ (where $\mathcal{S}_{1}\subset\{x:|x|\leq 1\}$, $\mathcal{S}_{2}\subset\{y:|y|\leq 1\}$), such that there exists a one-to-one map from $\mathcal{S}_{1}$ onto $\mathcal{S}_{2}$. Then, the regularity of $R(x,y)$ implies that by analytic continuation, all zero-pairs of the kernel can be constructed starting from $\mathcal{S}_{2}$. Thus, the construction of the suitable subset $\mathcal{S}_{1}\times\mathcal{S}_{2}$ is crucial. Under ergodicity conditions, it is shown that $\mathcal{S}_{1}$, $\mathcal{S}_{2}$ are both simple (i.e., no double points), smooth (i.e., with continuously varying tangents), and analytic contours, except possibly at $x=1$, and $y=1$, respectively (see Figure \ref{conto} and Appendix \ref{defi}). Thus, up to now, our problem is reduced to the determination of $g_{0}(x)$ and $h_{0}(y)$, both regular in $|x|<1$, $|y|<1$, respectively, that satisfy a relation that is derived by \eqref{fun} for $(x,y)\in \mathcal{S}_{1}\times\mathcal{S}_{2}$.
\item[Step 2:] Then, the above problem is transformed into a Riemann boundary value problem in a smooth closed contour $\mathcal{L}$. To accomplish this task, we need to firstly construct the contour $\mathcal{L}$ from the contours $\mathcal{S}_{1}$, $\mathcal{S}_{2}$; see Section \cite[II.3.3-II.3.7]{coh1}. Details on steps 1, 2, are provided in the subsection \ref{ker}. 
\item[Step 3:] Using the results derived steps 1, 2, and the \textit{corresponding boundary theorem} \cite[p. 179]{neh}, the problem of finding $g_{0}(x)$ and $h_{0}(y)$, both regular in $|x|<1$, $|y|<1$, is reduced to the solution of a Riemann boundary value problem in a smooth, closed contour $\mathcal{L}$, with a boundary condition being the relation that is derived by \eqref{fun} for $(x,y)\in \mathcal{S}_{1}\times\mathcal{S}_{2}$, which is now transformed on the smooth contour $\mathcal{L}$ (constructed at step 2). Details on this step are given in subsection \ref{solution}. By solving this Riemann boundary value problem, we obtain $g_{0}(x)$, $h_{0}(y)$, and thus $g(x,y)$ in terms of the $N_{1}+N_{2}+1$ unknown probabilities mentioned above. Details on deriving these unknown probabilities are given at the end of the subsection \ref{solution}.
\end{enumerate}
To recapitulate the state of affairs so far:
\begin{enumerate}
\item For the probabilities of states in the region $S_{a}$ we have equations (\ref{e1}), for  $n_{1}= 0, 1,..., N_{1}-1$, $n_{2} = 0, 1,..., N_{2}-1$.
\item For those in the region $S_{b}$, we have equations (\ref{r2}), $n_{2}=0, 1,...,N_{2} -1$.
\item For those in the region $S_{c}$, we have equations (\ref{r3}), $n_{1} = 0, 1,..., N_{1}-1$.  
\item For the region $S_{d}$, all unknown quantities are expressed in terms of
\begin{enumerate}
\item  $g_{0}(x)$, $h_{0}(y)$, 
\item the $N_{1}+N_{2}+1$ probabilities, $\pi(N_{1},n_{2})$, $n_{2} =0, 1,...,N_{2} -1$, and $\pi(n_{1},N_{2})$, $n_{1} =0, 1,...,N_{1} -1$ and $\pi(N_{1},N_{2})$.  
\end{enumerate} 
\end{enumerate}
\subsection{Kernel analysis}\label{ker}
Assume here on that $\Psi(0,0)>0$; i.e., $p_{-1,-1}>0$. As already stated, the kernel analysis is the first and crucial step towards the solution of the functional equation \eqref{fun}. A central role in the analysis is played by the zero-pairs of the kernel, i.e., the set $\mathcal{K}=\{(x,y):R(x,y)=0,|x|\leq1,|y|\leq 1\}$, the existence of which, can be shown by the Rouch\'e's theorem \cite{marku}. For such zero-pairs, due to the regularity properties of $g(x,y)$, the right-hand side of \eqref{fun} vanishes too. This will provide us with a relation between the functions $g_{0}(x)$ and $h_{0}(y)$.  

As stated in step 1, we look for a suitable representation of zero-pairs, which in our case can be found by assuming a \textit{symmetrical} representation (see also \cite[Section II.2.2]{rw}), i.e., by considering the kernel for $s\in\mathbb{C}$ (where $\mathbb{C}$ is the complex plane), such that $|s|=1$ and 
\begin{equation*}
x=gs,\,\,\,y=gs^{-1},\,\,|g|\leq1,\label{dgh}
\end{equation*}
(where $gs=g\times s$, $gs^{-1}=g\times s^{-1}$). Then,
\begin{equation}
\begin{array}{rl}
R(gs,gs^{-1})=&g^{2}-\Psi(gs,gs^{-1})=g^{2}-\mathbb{E}(g^{\xi_{1}^{(3)}+\xi_{2}^{(3)}}s^{\xi_{1}^{(3)}-\xi_{2}^{(3)}})\\
=&g^{2}-\sum_{i=-1}^{1}\sum_{j=-1}^{1}p_{i,j}g^{i+j+2}s^{i-j}.
\end{array}\label{zzz}
\end{equation}
It is readily seen that $\Psi(gs,gs^{-1})$ is for $g=1$ regular at $s=1$, and for $s=1$, regular at $g=1$ (i.e., all moments exist and are finite), whereas, after rearranging the terms in \eqref{zzz} we obtain 
\begin{equation}
\begin{array}{l}
R(gs,gs^{-1})=0\Leftrightarrow g^{2}=\frac{p_{-1,-1}+p_{-1,0}gs^{-1}+p_{0,-1}gs}{1-\sum_{\substack{ i=-1\\i+j\geq0}}^{1}\sum_{j=-1}^{1}p_{i,j}g^{i+j}s^{i-j}}.
\end{array}\label{fty}
\end{equation}
Note that for $|g|\leq1$, $|s|=1$, the denominator in (\ref{fty}) never vanishes. Indeed, 
\begin{displaymath}
\begin{array}{rl}
|\sum_{\substack{ i=-1\\i+j\geq0}}^{1}\sum_{j=-1}^{1}p_{i,j}g^{i+j}s^{i-j}|\leq& p_{0,0}+p_{0,1}+p_{1,-1}+p_{1,0}+p_{1,1}+p_{-1,1}\\=&1-(p_{-1,-1}+p_{-1,0}+p_{0,-1})<1.
\end{array}
\end{displaymath}
\begin{theorem}\label{th1}\begin{enumerate}
\item If $E_{x}+E_{y}<0$, and for fixed $s\in\mathbb{C}$ such that $|s|=1$, the kernel $R(gs,gs^{-1})$, $|s|=1$ has in $|g|\leq1$ exactly two zeros each with multiplicity one, which are both real for $s=\pm 1$.
\item If $g(s)$ is a zero, so is $-g(-s)$.
\end{enumerate}
\end{theorem}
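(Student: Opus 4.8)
The plan is to treat the symmetry (item~2) algebraically and then count the disc-zeros (item~1) by Rouch\'e's theorem, spending the drift hypothesis only on the boundary values $s=\pm1$. I would dispose of item~2 first, since it is purely algebraic. Substituting $(g,s)\mapsto(-g,-s)$ in the parametrised kernel \eqref{zzz} leaves it invariant: one has $(-g)^{i+j+2}(-s)^{i-j}=(-1)^{2i+2}g^{i+j+2}s^{i-j}=g^{i+j+2}s^{i-j}$ and $(-g)^{2}=g^{2}$, so $R((-g)(-s),(-g)(-s)^{-1})=R(gs,gs^{-1})$. Hence a zero $g_{0}$ attached to the parameter $s_{0}$ forces $-g_{0}$ to be a zero attached to $-s_{0}$; read as functions of the parameter on $|s|=1$, this is exactly the assertion that $-g(-s)$ is a zero whenever $g(s)$ is.

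For item~1 I would combine Rouch\'e's theorem with the decomposition already prepared in \eqref{fty}. First I write $R(gs,gs^{-1})=g^{2}D(g,s)-N(g,s)$, where $N(g,s)=p_{-1,-1}+g(p_{-1,0}s^{-1}+p_{0,-1}s)$ is the numerator in \eqref{fty} (the terms of $\Psi$ with $i+j<0$) and $g^{2}D(g,s)$ gathers the rest, $D$ being the denominator of \eqref{fty}, which the excerpt has already shown to be non-vanishing for $|g|\le1$, $|s|=1$. Consequently, on $|g|\le1$ the zeros of the kernel coincide with those of the analytic function $g^{2}-N/D$. On $|g|=|s|=1$ I would use the bounds $|N(g,s)|\le p_{-1,-1}+p_{-1,0}+p_{0,-1}$ and, since $\sum_{i+j\ge0}p_{i,j}=1-(p_{-1,-1}+p_{-1,0}+p_{0,-1})$, also $|D(g,s)|\ge p_{-1,-1}+p_{-1,0}+p_{0,-1}$. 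Tracing when the two are simultaneous equalities shows that every term of $\sum_{i+j\ge0}p_{i,j}g^{i+j}s^{i-j}$ must be a non-negative real; through the $s^{\pm2}$ contributions this forces $s=\pm1$ and then $g=\pm1$. Thus for $s\ne\pm1$ the strict inequality $|N|<|D|=|g^{2}D|$ holds on $|g|=1$, and Rouch\'e (with $f=g^{2}$, $h=-N/D$, $|h|<|f|$) yields exactly two zeros in $|g|<1$; note that this step does not yet use the drift condition.

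The crux is the boundary case $s=\pm1$, and this is where $E_{x}+E_{y}<0$ is spent. Fix $s$ and set $r(g):=R(gs,gs^{-1})$. At $(g,s)=(1,1)$ one has $r(1)=1-\sum_{i,j}p_{i,j}=0$, and by item~2 also $R(-1,-1)=0$, so a zero now sits on $|g|=1$. Differentiating \eqref{zzz} in $g$ at $g=s=1$ gives $r'(1)=2-\sum_{i,j}p_{i,j}(i+j+2)=-(E_{x}+E_{y})>0$, so $g=1$ is a \emph{simple} boundary zero. For $s=1$ the kernel is a polynomial in $g$ with real coefficients, and from $r(0)=-p_{-1,-1}<0$ together with $r(-1)=2(p_{-1,0}+p_{0,-1}+p_{0,1}+p_{1,0})>0$ I would locate a second, real zero in $(-1,0)$; item~2 transfers the picture to $s=-1$, yielding $g=-1$ and a real zero in $(0,1)$. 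To certify that the closed-disc count is \emph{exactly} two and not three, I would argue by continuity: as $s\to1$ the two interior zeros of the strict-inequality regime vary continuously, the simple zero at $(1,1)$ continues analytically to a branch $g(s)$ by the implicit function theorem, and the sign $r'(1)>0$ supplied by $E_{x}+E_{y}<0$ forces this branch to move into $|g|<1$ as $s$ leaves $1$; hence it coincides with one of the two interior zeros, and the closed-disc total stays equal to two, both zeros being real.

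Multiplicity one is then read off as follows. For $s=\pm1$ the two zeros are distinct reals at which $r'\ne0$ (the boundary zero by the drift computation, the interior one by its sign change), hence simple. For $s\ne\pm1$ a double zero in $|g|<1$ would make the two branches of the algebraic function $g(s)$ collide, i.e.\ the discriminant of the kernel would vanish on $|s|=1$; since the strict inequality keeps all zeros off $|g|=1$ and the branches are already separated at $s=\pm1$, I would exclude such collisions through the discriminant. The part I expect to be genuinely delicate is exactly this boundary bookkeeping at $s=\pm1$: converting the single scalar inequality $E_{x}+E_{y}<0$ into the statement that the analytic continuation of the zero from $(\pm1,\pm1)$ bends \emph{inward}, so that the on-circle zero counts as one of the two disc-zeros rather than as a supernumerary one. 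This requires the first/second-order perturbation of $g(s)$ along $|s|=1$ and is the standard subtle step of the Fayolle--Iasnogorodski--Malyshev / Cohen kernel analysis.
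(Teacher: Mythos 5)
Your proposal is correct and follows essentially the same route as the paper's own proof in Appendix \ref{nnrw}: Rouch\'e's theorem built on the decomposition \eqref{fty} for the two-zero count, the drift computation $\frac{d}{dg}R(g,g)\big|_{g=1}=-(E_{x}+E_{y})>0$ for simplicity of the real zeros at $s=\pm 1$, and the parity symmetry $(g,s)\mapsto(-g,-s)$ for the second assertion. The only difference is the level of detail, not the method: where the paper compresses the equality cases of Rouch\'e and the boundary bookkeeping at $s=\pm 1$ (where a zero sits on the contour, so the ``straightforward'' application needs care) into a citation of \cite[Lemma 2.1, II.3.2, p. 153]{coh1}, you spell out the strictness analysis for $s\neq\pm 1$, the sign-change localization of the second real zero, and the continuity argument that keeps the closed-disc count equal to two.
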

\begin{proof}
See Appendix \ref{nnrw}.\hfill$\square$
\end{proof}

Define,
\begin{displaymath}
\mathcal{S}_{1}:=\{x:x=g(s)s,|s|=1\},\,\,\mathcal{S}_{2}:=\{y:y=g(s)s^{-1},|s|=1\},
\end{displaymath}
where $g(s)$ is the zero for which $g(s)\to 1$ as $s\to 1$, $|s|=1$. Let $s=e^{i\phi}$, i.e., 
\begin{equation}
x(\phi)=g(\phi)e^{i\phi},\ y(\phi)=g(\phi)e^{-i\phi},\,\phi\in[0,2\pi].
\label{dgh}
\end{equation}
Then, the relation (\ref{dgh}) defines a one-to-one mapping $f$ of $\mathcal{S}_{2}$ onto $\mathcal{S}_{1}$, i.e., $x(\phi)=f(y(\phi))$ or $y(\phi)=f^{-1}(x(\phi))$, $\phi\in[0,2\pi]$. More precisely, $x\in\mathcal{S}_{1}\Leftrightarrow \bar{x}\in\mathcal{S}_{2}$, where $\bar{x}$ is the complex conjugate of $x$. It is readily verified that $g(s)=g(\bar{s})$ for $|s|=1$, so that $\mathcal{S}_{1}$ and $\mathcal{S}_{2}$ are congruent contours, and as $s$ traverses the unit circle counter clockwise, $x$ traverses $\mathcal{S}_{1}$ counter clockwise whereas $y$ traverses $\mathcal{S}_{2}$ clockwise.

\begin{remark} Note that instead of \eqref{dgh}, other possible candidates for zero-pairs of the kernel can also be found. For example one can take $x$, such that $|x|=1$, and for such a fixed $x$, to look for $y=Y(x)$ such that $|y|<1$. However, such an approach may sometimes leads to a self-intersecting contour $\mathcal{S}_{2}=\{y:y=Y(x),|x|=1\}$, and along with $\mathcal{S}_{1}=\{x:|x|=1\}$, they do not satisfy the assumptions required in step 1 (i.e., we must have simple contours). \end{remark}

In the following, we have to show that $\mathcal{S}_{1}$, $\mathcal{S}_{2}$ are simple and smooth, i.e., they are closed, non-self intersecting curves with a continuously varying tangent; see Figures \ref{conto}, \ref{conto1}.

\begin{figure}[ht!]
\centering
\includegraphics[scale=0.6]{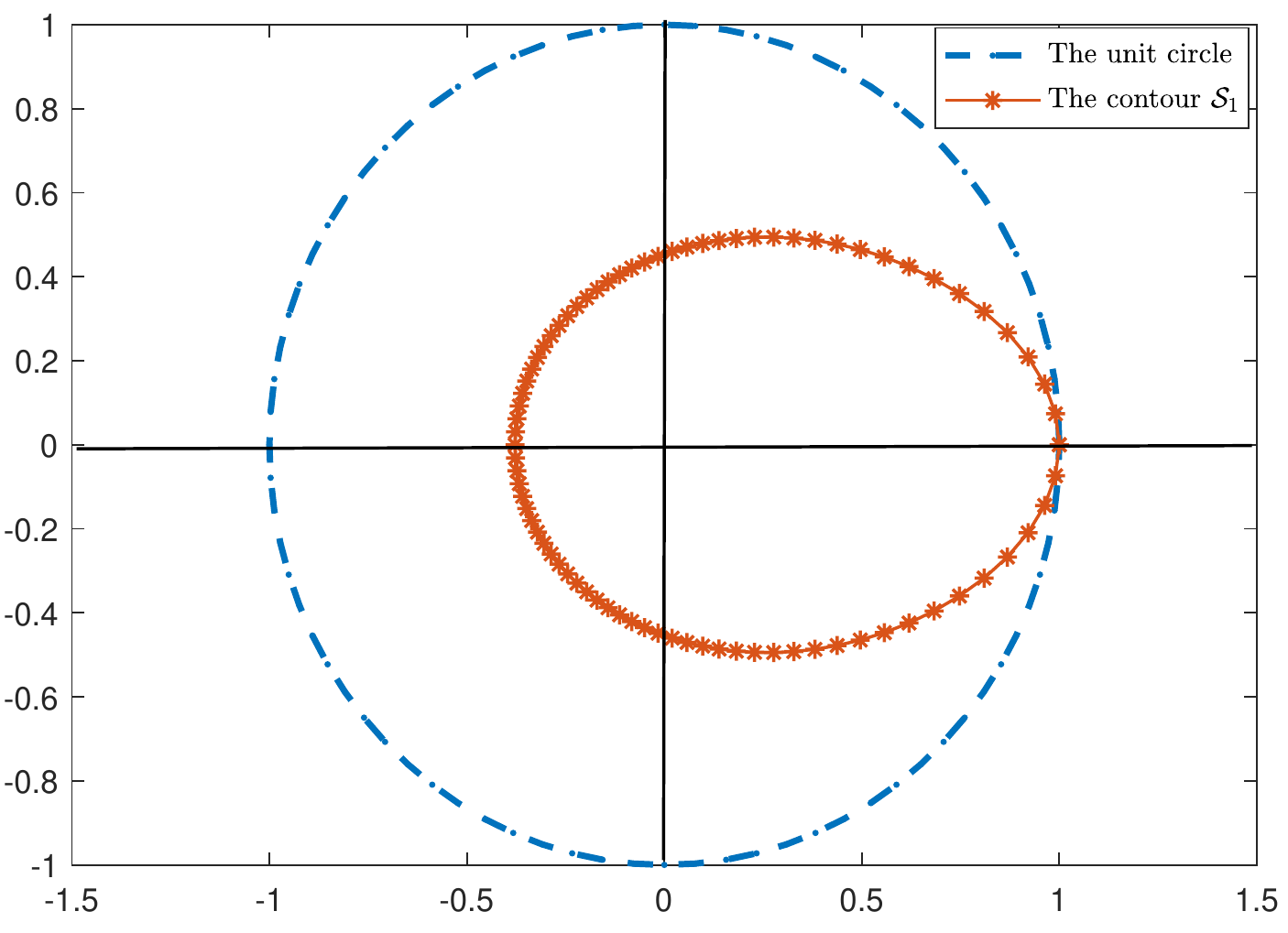}
\caption{The contour $\mathcal{S}_{1}$ for the symmetrical case ($\mathcal{S}_{1}$ coincides with $\mathcal{S}_{2}$) where $p_{0,1}=p_{1,0}$, $p_{-1,0}=p_{0,-1}$, $p_{1,-1}=p_{-1,1}$, and the unit circle.}\label{conto}
\end{figure}
\begin{figure}[ht!]
\centering
\includegraphics[scale=0.7]{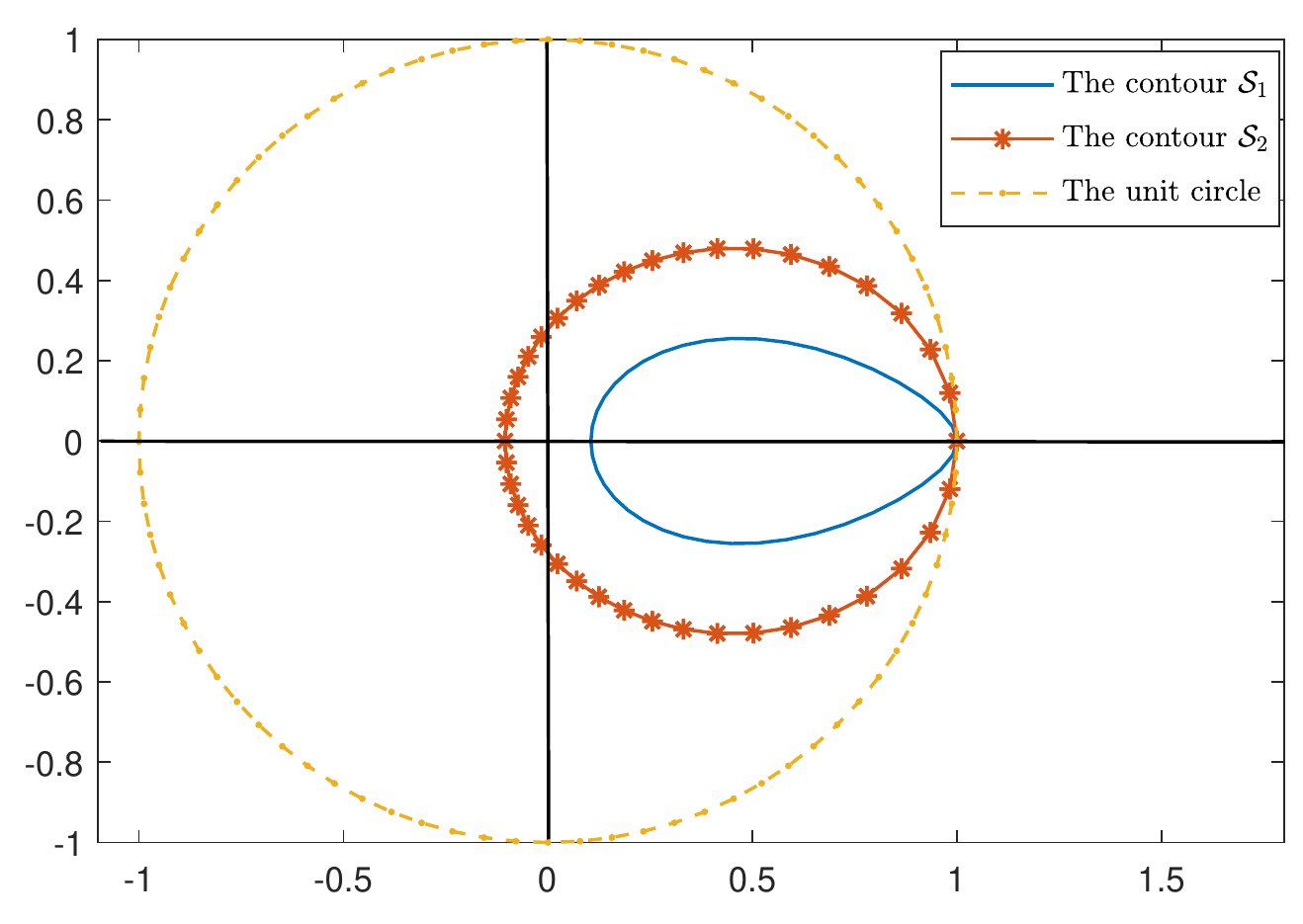}
\caption{The contours $\mathcal{S}_{1}$, $\mathcal{S}_{2}$ for the asymmetrical case for the case $p_{-1,-1}=0$, and the unit circle.}\label{conto11}
\end{figure}
\begin{figure}[ht!]
\centering
\includegraphics[scale=0.73]{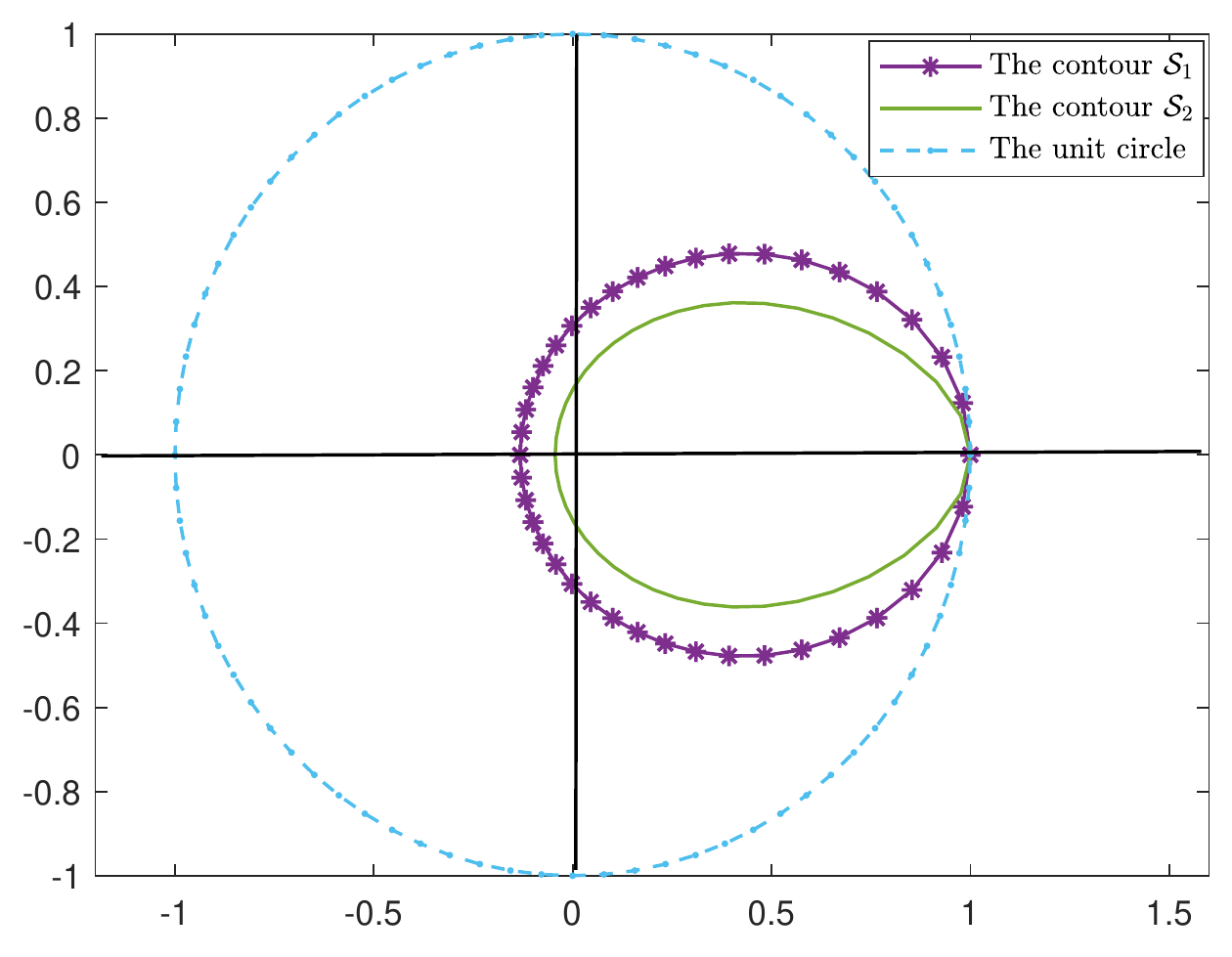}
\caption{The contours $\mathcal{S}_{1}$, $\mathcal{S}_{2}$ for the asymmetrical case for the case $p_{-1,-1}>0$ and the unit circle.}\label{conto1}
\end{figure}
Simple calculations show that for $|g|\leq 1$ satisfying (\ref{fty}) we have
{\small{\begin{equation}
\begin{array}{l}
\frac{s}{g(s)}\frac{d}{ds}g(s)\\
=\frac{s(p_{0,-1}s^{2}-p_{-1,0})+g(s)[2(s^{4}p_{1,-1}-p_{-1,1})-p_{0,1}g(s)s]}{g(s)\left(2[(1-p_{0,0})s^{2}-p_{-1,1}-s^{4}p_{1,-1}-2g^{2}(s)s^{2}]-3g^{2}(s)s[s^{2}p_{1,0}+p_{0,1}]\right)-s(s^{2}p_{0,-1}+p_{-1,0})}.
\end{array}\label{dcv}
\end{equation}}}
Thus, if the denominator in \eqref{dcv} never vanishes for $|g|\leq 1$, $|s|=1$, then, both zeros of (\ref{fty}) have multiplicity one, and each of these zeros is an analytic function of $s$ on the unit circle $|s|=1$.
\begin{theorem}\label{yui}
If $E_{x}+E_{y}<0$, and $\mathbb{E}((2-\xi_{1}^{(3)}-\xi_{2}^{(3)})g^{\xi_{1}^{(3)}+\xi_{2}^{(3)}}s^{\xi_{1}^{(3)}-\xi_{2}^{(3)}})\neq0$ (i.e., the denominator in \eqref{dcv}), then $\mathcal{S}_{1}$, $\mathcal{S}_{2}$ are both smooth, and analytic contours, except possibly at $s=1$. Moreover, $x=0\in \mathcal{S}_{1}^{+}$, $y=0\in\mathcal{S}_{2}^{+}$, where $\mathcal{S}_{j}^{+}$ denotes the interior domain bounded by $\mathcal{S}_{j}$, $j=1,2$; see also Figures \ref{conto}, \ref{conto1}.
\end{theorem}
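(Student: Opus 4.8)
The plan is to read off all three conclusions from the implicit description of $g(s)$ as a root of the kernel, together with the reflection symmetry $g(s)=g(\bar{s})$ already recorded above. First I would establish analyticity. For $|s|=1$ the pair $(x,y)=(g(s)s,g(s)s^{-1})$ satisfies $R(x,y)=0$ identically in $s$, which by \eqref{zzz} reads $g(s)^{2}=\sum_{i,j}p_{i,j}g(s)^{i+j+2}s^{i-j}$, i.e. $h(g,s):=\sum_{i,j}p_{i,j}g^{i+j}s^{i-j}-1=0$ for $g\neq 0$. Since the increments have range in $\{-1,0,1\}^{2}$, $h$ is a polynomial in $g,s,s^{-1}$ and is jointly analytic, so by the analytic implicit function theorem $g(s)$ is analytic at every $s_{0}$ on the unit circle at which $\partial_{g}$ of the kernel does not vanish. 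A direct differentiation of $R(g(s)s,g(s)s^{-1})=0$ shows that the denominator appearing in \eqref{dcv} is proportional to this $g$-derivative along the zero set; hence the hypothesis, which asserts precisely that this denominator (written as the expectation $\mathbb{E}((2-\xi_{1}^{(3)}-\xi_{2}^{(3)})g^{\xi_{1}^{(3)}+\xi_{2}^{(3)}}s^{\xi_{1}^{(3)}-\xi_{2}^{(3)}})$) is nonzero for $|g|\le 1$, $|s|=1$, is exactly the condition needed to apply the theorem. Consequently $g(s)$ is analytic on $|s|=1$, the parametrizations $x(\phi)=g(e^{i\phi})e^{i\phi}$ and $y(\phi)=g(e^{i\phi})e^{-i\phi}$ are real-analytic, and $\mathcal{S}_{1},\mathcal{S}_{2}$ are analytic contours.

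Next I would turn to smoothness, i.e. a nonvanishing, continuously turning tangent. Differentiating gives $x'(\phi)=ie^{i\phi}\bigl(g(s)+sg'(s)\bigr)$, which is continuous by the analyticity just proved, so it remains to check $g(s)+sg'(s)\neq 0$. Writing $sg'(s)/g(s)$ through \eqref{dcv}, this tangent equals $g(s)$ times (numerator plus denominator) over (denominator), so its non-vanishing amounts to the numerator-plus-denominator of \eqref{dcv} being nonzero, which I would verify from the drift hypothesis $E_{x}+E_{y}<0$ together with the zero relation $h=0$. The symmetry $g(s)=g(\bar{s})$ makes $\phi\mapsto g(e^{i\phi})$ even, so $g'(1)=0$ and at the distinguished point $s=1$ (where $g(1)=1$ and $x=y=1$) the tangent reduces to $ig(1)=i\neq 0$. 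This is the classical point at which the contour meets $x=1$ (respectively $y=1$), and at which a corner can appear in degenerate regimes, which is why the statement carries the hedge ``except possibly at $s=1$''.

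For the location of the origin I would use the argument principle together with the same symmetry. Since $\Psi(0,0)=p_{-1,-1}>0$ we have $R(0,0)=-p_{-1,-1}\neq 0$, so a zero-pair $(0,0)$ is impossible and hence $g(s)\neq 0$ on $|s|=1$, giving $0\notin\mathcal{S}_{1}\cup\mathcal{S}_{2}$. The winding number of $\mathcal{S}_{1}$ about the origin is $\tfrac{1}{2\pi i}\oint_{|s|=1}\bigl(g'(s)/g(s)+1/s\bigr)\,ds$, whose second term contributes $1$. For the first term I would note that, by $g(s)=g(\bar{s})$, the values of $g$ at $\phi$ and at $2\pi-\phi$ coincide, so the image curve $\phi\mapsto g(e^{i\phi})$ is an arc traversed forward on $[0,\pi]$ and retraced backward on $[\pi,2\pi]$; being a back-and-forth arc that avoids $0$, it has zero net change of argument. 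Hence the winding number equals $1$ and $0\in\mathcal{S}_{1}^{+}$; the analogous computation for $y$, where $\arg y=\arg g-\phi$ contributes $-1$ (clockwise traversal), yields $0\in\mathcal{S}_{2}^{+}$.

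The hard part is the smoothness step: analyticity and the origin computation follow cleanly from the hypothesis and the reflection symmetry, but ruling out a vanishing tangent (equivalently, a stationary point of $x(\phi)$) is not delivered by the denominator condition alone and requires separately controlling the numerator-plus-denominator of \eqref{dcv} through $E_{x}+E_{y}<0$. I would also need to confirm simplicity of the contours, so that the interior domains $\mathcal{S}_{j}^{+}$ are unambiguous; this is precisely where the symmetric representation \eqref{dgh} is preferable to the alternative $y=Y(x)$ parametrization flagged in the preceding remark, since the map $s\mapsto g(s)s$ with $g(s)=g(\bar{s})$ can be shown to be injective on $|s|=1$ and thus produces a Jordan curve.
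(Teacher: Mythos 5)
Your first step is fine: identifying the non-vanishing denominator of \eqref{dcv} with the non-degeneracy condition of the analytic implicit function theorem is exactly what the paper records in the discussion preceding the theorem (the paper's own ``proof'' is then essentially a citation of \cite[Lemma II.3.2.2.2, pp.~157--158]{coh1}). The other two conclusions, however, are not actually proved in your proposal. For smoothness you correctly reduce non-vanishing of the tangent $x'(\phi)=ie^{i\phi}\bigl(g(s)+sg'(s)\bigr)$ to ``numerator plus denominator of \eqref{dcv} is nonzero'', but you only announce that you \emph{would} verify this from $E_{x}+E_{y}<0$, and you concede at the end that this is the hard part. It is not a routine verification, and your plan as stated cannot work: on the zero set one has $sg'(s)/g(s)=\sum_{i,j}p_{i,j}(i-j)g^{i+j}s^{i-j}\big/\bigl(-\sum_{i,j}p_{i,j}(i+j)g^{i+j}s^{i-j}\bigr)$, so at $s=1$ (where $g=1$) the tangent to $\mathcal{S}_{1}$ is proportional to $2E_{y}/(E_{x}+E_{y})$ and that to $\mathcal{S}_{2}$ to $-2E_{x}/(E_{x}+E_{y})$; taking $E_{y}=0$, $E_{x}<0$ is compatible with $E_{x}+E_{y}<0$ and makes the tangent vanish --- which is precisely why the theorem carries the hedge ``except possibly at $s=1$'' --- while for $s\neq1$ you give no argument at all. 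Simplicity of the contours, needed even to make sense of $\mathcal{S}_{j}^{+}$, is likewise deferred to a closing remark.

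The origin-location step has a sharper defect: it rests on the identity $g(s)=g(\bar{s})$, which you quote from the paper but which holds only for symmetric walks ($p_{i,j}=p_{j,i}$). Since the kernel has real coefficients, the symmetry valid for the branch fixed by $g(1)=1$ is $g(\bar{s})=\overline{g(s)}$; the two identities together would force $g$ to be real on $|s|=1$, which fails in general (e.g.\ for $p_{-1,-1}=\tfrac12$, $p_{1,0}=\tfrac13$, $p_{0,1}=\tfrac16$, the equation $g^{2}-\tfrac12-\tfrac{i}{6}g^{3}=0$ at $s=i$ has no real root, so $g(i)\neq g(-i)$). Under the correct symmetry, the curve $\phi\mapsto g(e^{i\phi})$ is not an arc traversed forward and retraced backward; it is conjugate-symmetric, and a conjugate-symmetric closed curve avoiding the origin can have any winding number (the unit circle itself is one), so $\mathrm{ind}_{|s|=1}\,g(s)=0$ does not follow. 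The same false symmetry produces your claims $g'(1)=0$ and ``tangent $=i\neq0$ at $s=1$'': in fact $g'(1)=(E_{x}-E_{y})/(-(E_{x}+E_{y}))$, generally nonzero, and the tangent at $s=1$ can vanish as noted above. To establish $0\in\mathcal{S}_{1}^{+}$, $0\in\mathcal{S}_{2}^{+}$ one needs an index computation of the type the paper performs in Lemma \ref{lemas} for $\Psi(0,0)=0$: take logarithms in \eqref{fty}, use $\mathrm{ind}\bigl(1-\sum_{i+j\geq0}p_{i,j}g^{i+j}s^{i-j}\bigr)=0$, and control the index of $p_{-1,-1}+g(s)(p_{-1,0}s^{-1}+p_{0,-1}s)$, which is immediate only under extra conditions (e.g.\ $p_{-1,-1}>p_{-1,0}+p_{0,-1}$). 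So of the theorem's three assertions, your proposal genuinely establishes only the analyticity of the parametrization.
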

\begin{proof}
The results presented so far are necessary for the proof of the theorem, which follows the lines in \cite[Lemma II.3.2.2.2, pp. 157-158]{coh1}, and further details are omitted.\hfill$\square$
\end{proof}

Theorem \ref{yui} implies (See also Theorem 1.1 in \cite{coh2}) that we need to solve the following boundary value problem: Find a unique simple contour $\mathcal{L}$ in the $z$-plane with $z=0\in\mathcal{L}^{+}$, $z=1\in\mathcal{L}$, $z=\infty\in\mathcal{L}^{-},$ and functions 
\begin{displaymath}
x(z):\mathcal{L}^{+}\cup\mathcal{L}\to\mathcal{S}_{1}^{+}\cup\mathcal{S}_{1},\,\,y(z):\mathcal{L}^{-}\cup\mathcal{L}\to\mathcal{S}_{2}^{+}\cup\mathcal{S}_{2},
\end{displaymath}
such that
\begin{enumerate}
\item $z=0$ is a simple zero of $x(.)$, and $z=\infty$ is a simple zero of $y(.)$. Moreover, $0<d:=\lim_{|z|\to\infty}zy(z)<\infty$,
\item $x(z):\mathcal{L}^{+}\to\mathcal{S}_{1}^{+}$ is regular and univalent for $z\in \mathcal{L}^{+}$,
\item $y(z):\mathcal{L}^{-}\to\mathcal{S}_{2}^{+}$ is regular and univalent for $z\in \mathcal{L}^{-}$,
\item $x(z)=f(y(z))$, $z\in\mathcal{L}^{+}$, where $f(.)$ is the one-to-one mapping defined by the relations in \eqref{dgh}.
\item $x^{+}(z)$, $y^{-}(z)$, $z\in\mathcal{L}$ is a zero pair of the kernel $R(x,y)=0$, with $x^{+}(z)\in\mathcal{S}_{1}$, $y^{-}(z)\in\mathcal{S}_{2}$, where for $z\in\mathcal{L}$, $x^{+}(z)=\lim_{t\to z,t\in\mathcal{L}^{+}}x(t)$, $y^{-}(z)=\lim_{t\to z,t\in\mathcal{L}^{-}}y(t)$.
\end{enumerate}

To summarize the results obtained in this section, we have: $i)$ $\Psi(0,0)>0$, $ii)$ $E_{x}<0$, $E_{y}<0$, $iii)$ $\mathcal{S}_{1}$, $\mathcal{S}_{2}$ are simple, and analytic contours with $x=0\in \mathcal{S}_{1}^{+}$, $y=0\in \mathcal{S}_{2}^{+}$, and that $iv)$ $R(x,y)$ is for $x=1$ (resp. $y=1$) a regular function at $y=1$ (resp. $x=1$). With these results, it is assured (see \cite[Theorem II.3.3.3.1]{coh1}) that there exist functions $x(z)$, $y(z)$, and a Jordan contour (see Appendix \ref{defi}) $\mathcal{L}$ satisfying the assertions $1,...,5$. Moreover, $\mathcal{L}$ is an analytic contour. The following lemma describes the solution of the boundary value problem described by the assertions $1,...,5$ above.
\begin{lemma}\label{kopa}
The boundary value problem described by the assertions $1,...,5$ has a unique solution and for $z\in\mathcal{L}$, there exists a real function $\lambda(\cdot)$ such that $\lambda(1)=0$,
\begin{displaymath}
x^{+}(z)=g(e^{i\lambda(z)})e^{i\lambda(z)},\,\,\,\,y^{-}(z)=g(e^{i\lambda(z)})e^{-i\lambda(z)},
\end{displaymath}
and
\begin{align}
x(z)=&ze^{\frac{1}{2i\pi}\int_{\zeta\in\mathcal{L}}\log[g(e^{i\lambda(\zeta)})][\frac{\zeta+z}{\zeta-z}-\frac{\zeta+1}{\zeta-1}]\frac{d\zeta}{\zeta}},\,z\in\mathcal{L}^{+},\label{ml1}\\
y(z)=&z^{-1}e^{\frac{-1}{2i\pi}\int_{\zeta\in\mathcal{L}}\log[g(e^{i\lambda(\zeta)})][\frac{\zeta+z}{\zeta-z}-\frac{\zeta+1}{\zeta-1}]\frac{d\zeta}{\zeta}},\,z\in\mathcal{L}^{-},\label{ml2}\\
e^{i\lambda(z)}=&ze^{\frac{1}{2i\pi}\int_{\zeta\in\mathcal{L}}\log[g(e^{i\lambda(\zeta)})][\frac{\zeta+z}{\zeta-z}-\frac{\zeta+1}{\zeta-1}]\frac{d\zeta}{\zeta}},\,z\in\mathcal{L}.\label{ml3}
\end{align}
\end{lemma}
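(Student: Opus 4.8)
The plan is to reduce the boundary value problem (assertions $1$--$5$) to a scalar additive Riemann problem on $\mathcal{L}$, whose solution is a Cauchy-type integral, and then to read off \eqref{ml1}--\eqref{ml3}. Existence of the triple $(x(z),y(z),\mathcal{L})$ with $\mathcal{L}$ analytic has already been secured by \cite[Theorem II.3.3.3.1]{coh1}; what remains is to pin it down uniquely and to represent it explicitly. For uniqueness I would argue that assertions $2$--$3$ make $x(\cdot)$ and $y(\cdot)$ univalent, hence the Riemann maps of $\mathcal{L}^{+}$ onto $\mathcal{S}_{1}^{+}$ and of $\mathcal{L}^{-}\cup\{\infty\}$ onto $\mathcal{S}_{2}^{+}$, and that the three real normalisations — the simple zero $x(0)=0$, the simple zero of $y$ at $\infty$ with $0<d=\lim_{|z|\to\infty}zy(z)<\infty$, and $z=1\mapsto(x,y)=(1,1)$ forced by $g(1)=1$ — remove exactly the three degrees of freedom of a conformal self-map of a disc, so the solution is unique.

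Next I would build the boundary parametrisation. For $z\in\mathcal{L}$, assertion $5$ says that $(x^{+}(z),y^{-}(z))$ is a zero-pair lying on $\mathcal{S}_{1}\times\mathcal{S}_{2}$; by the symmetric representation \eqref{dgh} every such pair equals $\bigl(g(e^{i\lambda})e^{i\lambda},\,g(e^{i\lambda})e^{-i\lambda}\bigr)$ for a unique angle, so there is a continuous real function $\lambda(z)$ with $x^{+}(z)=g(e^{i\lambda(z)})e^{i\lambda(z)}$ and $y^{-}(z)=g(e^{i\lambda(z)})e^{-i\lambda(z)}$, normalised by $\lambda(1)=0$. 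The crucial observation is that the product eliminates the angle: $x^{+}(z)\,y^{-}(z)=\bigl[g(e^{i\lambda(z)})\bigr]^{2}$. Now set $\Omega(z)=\log\bigl(x(z)/z\bigr)$ for $z\in\mathcal{L}^{+}$ and $\Omega(z)=-\log\bigl(zy(z)\bigr)$ for $z\in\mathcal{L}^{-}$; both branches are single-valued and analytic because $x$ has a simple zero only at $0\in\mathcal{L}^{+}$ and $y$ a simple zero only at $\infty\in\mathcal{L}^{-}$. Its jump across $\mathcal{L}$ is then $\Omega^{+}(z)-\Omega^{-}(z)=\log\bigl(x^{+}(z)y^{-}(z)\bigr)=2\log g(e^{i\lambda(z)})$, i.e.\ a scalar additive Riemann boundary condition with a known right-hand side once $\lambda$ is known.

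I would then solve this Riemann problem by the Plemelj--Sokhotski formulae. Using the partial-fraction identity $\tfrac{\zeta+z}{\zeta(\zeta-z)}=\tfrac{2}{\zeta-z}-\tfrac{1}{\zeta}$ shows that the sectionally analytic function $\Omega(z)=\tfrac{1}{2i\pi}\int_{\mathcal{L}}\log[g(e^{i\lambda(\zeta)})]\bigl[\tfrac{\zeta+z}{\zeta-z}-\tfrac{\zeta+1}{\zeta-1}\bigr]\tfrac{d\zeta}{\zeta}$ indeed has jump $2\log g(e^{i\lambda(z)})$; the subtracted term $\tfrac{\zeta+1}{\zeta-1}$ is a pure normalisation forcing $\Omega(1)=0$ (so that $x(1)=1$, consistent with $\lambda(1)=0$), while the $\tfrac{d\zeta}{\zeta}$ factor keeps $\Omega$ finite at $0$ and $\infty$, guaranteeing the simple zeros and the finite, non-zero constant $d=e^{-\Omega(\infty)}$. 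Exponentiating $\Omega$ in $\mathcal{L}^{+}$ gives \eqref{ml1}, exponentiating $-\Omega$ in $\mathcal{L}^{-}$ gives \eqref{ml2}, and taking the principal-value limit $z\to\mathcal{L}$, where $\Omega^{+}=\tfrac12(\Omega^{+}+\Omega^{-})+\log g$ yields $x^{+}(z)=z\,e^{\Omega^{+}(z)}=g(e^{i\lambda(z)})\,e^{i\lambda(z)}$ and hence $e^{i\lambda(z)}=z\,e^{\frac12(\Omega^{+}+\Omega^{-})(z)}$, gives \eqref{ml3}.

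Finally, I would note that \eqref{ml3} is in fact a nonlinear singular (Theodorsen-type) integral equation for the real function $\lambda$, since the kernel of $\Omega$ contains $\log g(e^{i\lambda(\zeta)})$ and consistency demands $|e^{i\lambda(z)}|=1$ on $\mathcal{L}$. The main obstacle is precisely the analytic control of this fixed-point problem: establishing that $\lambda$ exists, is unique, real and smooth, and — equivalently — that the functions produced by \eqref{ml1}--\eqref{ml2} are genuinely univalent and map \emph{onto} $\mathcal{S}_{1}^{+}$, $\mathcal{S}_{2}^{+}$ rather than merely into them. This is where the smoothness and analyticity of $\mathcal{S}_{1}$, $\mathcal{S}_{2}$ from Theorem \ref{yui}, together with the argument/angle estimates of \cite[Lemma II.3.2.2.2, Theorem II.3.3.3.1]{coh1}, are indispensable; I would invoke them to close the existence part rather than re-deriving the conformal-mapping machinery.
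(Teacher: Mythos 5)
Your proposal is correct and takes essentially the same route as the paper, whose entire proof is the citation to \cite[Section II.3.6]{coh1}: your reduction of assertions $1,\ldots,5$ to the additive Riemann problem $\Omega^{+}(z)-\Omega^{-}(z)=2\log g(e^{i\lambda(z)})$ with $\Omega(z)=\log(x(z)/z)$ on $\mathcal{L}^{+}$ and $\Omega(z)=-\log(zy(z))$ on $\mathcal{L}^{-}$, its solution by the normalized Cauchy kernel, and the recovery of \eqref{ml3} from \eqref{ml1}--\eqref{ml2} via Plemelj--Sokhotski is precisely the argument of that section, with the hard analytic part (existence and smoothness of $\lambda$, surjectivity of the maps) deferred to \cite[Theorem II.3.3.3.1]{coh1} exactly as the paper does. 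The only imprecision is the degrees-of-freedom count in your uniqueness sketch: the relevant gauge freedom is the M\"obius group of the sphere, killed by fixing the three points $0$, $1$, $\infty$ (equivalently, by gluing $x_{2}^{-1}\circ x_{1}$ and $y_{2}^{-1}\circ y_{1}$ across $\mathcal{L}$ into an entire function fixing these points), not the three real parameters of a disc self-map --- but the normalization idea you use is the standard and correct one.
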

\begin{proof}
The proof follows the lines in \cite[Section II.3.6]{coh1}.\hfill$\square$
\end{proof}  
\begin{remark}
Note that \eqref{ml3} follows by applying the Plemelj-Sokhotski formulas in \eqref{ml1}, \eqref{ml2}. Since $x^{+}(z)$ maps the contour $\mathcal{L}$ one-to-one onto the contour $\mathcal{S}_{1}$, and since $x(z)$ given in \eqref{ml1}, is regular in $\mathcal{L}^{+}\cup\mathcal{L}$, it follows from the principle of corresponding boundaries (see Appendix \ref{defi}) that $x(z)$ maps $\mathcal{L}^{+}$ \textit{conformally} onto $\mathcal{S}_{1}^{+}$. Similarly for $y(z)$. 
\end{remark}
\begin{remark}
Equation (\ref{ml3}) is the key to obtain $\lambda(.)$ and $\mathcal{L}$, and is rewritten as:
\begin{equation}\begin{array}{c}
i\lambda(z)-\log[z]=\frac{1}{2i\pi}\int_{\zeta\in\mathcal{L}}\log[g(e^{i\lambda(\zeta)})][\frac{\zeta+z}{\zeta-z}-\frac{\zeta+1}{\zeta-1}]\frac{d\zeta}{\zeta},\,z\in\mathcal{L},
\end{array}
\label{soo}
\end{equation} 
with $\mathcal{L}=\{z:z=\rho(\phi)e^{i\phi},0\leq\phi\leq 2\pi\}$, $\lambda(z)$, $z\in\mathcal{L}$. Note that 
\eqref{soo} represents a Theodorsen's integral equation (replace $\zeta$ by $e^{i\omega}$, $z$ by $e^{i(\phi-\phi_{0})}$ and $\lambda(z)$ by $\theta(\phi)$ with $\omega,\phi,\phi_{0}$ real and $\theta(\phi_{0})=0$).

In order to obtain $\mathcal{L}=\{z\in\mathbb{C}:z=\rho(\phi)e^{i\phi},\phi\in[0,2\pi]\}$ and $\lambda(z)$, $z\in\mathcal{L}$, or equivalently, $\theta(\phi)=\lambda(\rho(\phi)e^{i\phi})$, $\phi\in[0,2\pi]$, we need to solve the integral equation \eqref{soo}, the solution of which provides the two real functions $\rho(\phi)$, and $\theta(\phi)$. This is done numerically by separating the real and imaginary part of its right hand-side of (\ref{soo}), which leads to two singular integral equations in the two unknowns functions $\rho(\phi):=\rho(\theta(\phi))=g(e^{i\theta(\phi)})$, $\phi\in[0,2\pi]$, $\lambda(z)$, $z\in\mathcal{L}$. For a numerical treatment of (\ref{soo}), which may be regarded as a generalization of the Theodorsen's integral equation, see \cite[Section IV.2.3]{coh1}.\end{remark}
\subsection{Solution of the functional equation \eqref{fun}}\label{solution}
Subsection \ref{ker} provides the necessary information for the solution of the functional equation \eqref{fun}. Having investigated the properties of the zero-pairs of the kernel equation we are able to obtain a relation between the unknown functions $g_{0}(x)$, $h_{0}(y)$, and establish a way to reduce the problem of their determination to the solution of a Riemann boundary value problem on the smooth contour $\mathcal{L}$, as shown below.

Since for $z\in\mathcal{L}$, $(x^{+}(z),y^{-}(z))$ is a zero-pair of the kernel, such that $|x^{+}(z)|<1$, $|y^{-}(z)|<1$, we have for $z\in\mathcal{L}$
\begin{equation}
A(x^{+}(z),y^{-}(z))g_{0}(x^{+}(z))+B(x^{+}(z),y^{-}(z))h_{0}(y^{-}(z))+C(x^{+}(z),y^{-}(z))=0,
\end{equation}
or equivalently,
\begin{equation}
\tilde{g}_{0}(z)=G(z)\tilde{h}_{0}(z)+\tilde{c}(z),\,\,z\in\mathcal{L},
\label{pp}\end{equation}
where $\tilde{g}_{0}(z):=g_{0}(x^{+}(z))$, $\tilde{h}_{0}(z):=h_{0}(y^{-}(z))$ and
\begin{displaymath}
\begin{array}{rl}
G(z):=-\frac{B(x^{+}(z),y^{-}(z))}{A(x^{+}(z),y^{-}(z))},\,\,\,\tilde{c}(z):=-\frac{C(x^{+}(z),y^{-}(z))}{A(x^{+}(z),y^{-}(z))}.
\end{array}
\end{displaymath}

To proceed, we have to ensure that $G(.)$ and $\tilde{c}(.)$ satisfy the H\"older condition, and $G(.)$ never vanishes. However, the general form of $G(.)$ cannot exclude the possibility both of vanishing, and on taking infinite values at some points of $\mathcal{L}$. More importantly, the poles of $G(.)$ (if any) that are located in the region bounded by $\mathcal{L}$ and the unit circle will be also poles of $\tilde{g}_{0}(.)$. Let $a_{k}$, $k=1,\ldots,m$ be the poles of $G(.)$ (i.e., the zeros of $A(x^{+}(z),y^{-}(z))$), with multiplicity $u_{k}$, and let also $b_{s}$, $s=1,\ldots,l$ the zeros of $G(.)$ with multiplicity $p_{s}$. Denote
\begin{displaymath}
\begin{array}{ll}
\widehat{g}_{0}(z):=\prod_{k=1}^{m}(z-a_{k})^{u_{k}}\tilde{g}_{0}(z),&\widehat{h}_{0}(z):=\prod_{s=1}^{l}(z-b_{s})^{p_{s}}\tilde{h}_{0}(z),\\\widehat{G}(z):=\frac{\prod_{k=1}^{m}(z-a_{k})^{u_{k}}}{\prod_{s=1}^{l}(z-b_{s})^{p_{s}}}G(z),&\widehat{c}(z):=\prod_{k=1}^{m}(z-a_{k})^{u_{k}}\tilde{c}(z).
\end{array}
\end{displaymath}
Then, (\ref{pp}) reads for $z\in\mathcal{L}$
\begin{equation}
\widehat{g}_{0}(z)=\widehat{G}(z)\widehat{h}_{0}(z)+\widehat{c}(z),
\label{pqp}
\end{equation}
and is the boundary condition of a non-homogeneous Riemann boundary value problem \cite{ga} on $\mathcal{L}$: Determine two functions $\widehat{g}_{0}(z)$, $\widehat{h}_{0}(z)$ such that
\begin{enumerate}
\item $\widehat{g}_{0}(z)$ is regular for $z\in\mathcal{L}^{+}$, and continuous in $\mathcal{L}^{+}\cup\mathcal{L}$,
\item $\widehat{h}_{0}(z)$ is regular for $z\in\mathcal{L}^{-}$, and continuous in $\mathcal{L}^{-}\cup\mathcal{L}$,
\item For $z\in\mathcal{L}$, the boundary condition \eqref{pqp} holds.
\end{enumerate} 

If the index (see Appendix \ref{defi}) $\chi:=index[\widehat{G}(z)]_{\mathcal{L}}\geq 0$, then
\begin{equation}
\begin{array}{rl}
g_{0}(z):=&\prod_{k=1}^{m}(z-a_{k})^{-u_{k}}e^{\Gamma(z)}[\Phi(z)+P_{\chi}(z)],\,z\in\mathcal{L}^{+},\vspace{2mm}\\
h_{0}(z):=&\prod_{s=1}^{l}(z-b_{s})^{-p_{s}}e^{\Gamma(z)}[\Phi(z)+P_{\chi}(z)],\,z\in\mathcal{L}^{-},
\end{array}
\label{solo}
\end{equation}
where $P_{\chi}(\cdot)$ an arbitrary polynomial of degree $\chi\geq 0$, and
\begin{displaymath}
\begin{array}{rl}
\Gamma(z):=&\frac{1}{2i\pi}\int_{t\in\mathcal{L}}\log[t^{-\chi}G(t)]\frac{dt}{t-z},\,z\notin\mathcal{L},\vspace{2mm}\\
\Phi(z):=&\frac{1}{2i\pi}\int_{t\in\mathcal{L}}\widehat{c}(t)e^{-\Gamma^{+}(t)}\frac{dt}{t-z},\,z\notin\mathcal{L}.
\end{array}
\end{displaymath}
If $\chi<0$, then
\begin{equation}
\begin{array}{rl}
g_{0}(z):=&\prod_{k=1}^{m}(z-a_{k})^{-u_{k}}e^{\Gamma(z)}\Phi(z),\,z\in\mathcal{L}^{+},\vspace{2mm}\\
h_{0}(z):=&\prod_{s=1}^{l}(z-b_{s})^{-p_{s}}e^{\Gamma(z)}\Phi(z),\,z\in\mathcal{L}^{-},
\end{array}
\label{solo1}
\end{equation}
but now the following $-\chi-1$ conditions must be satisfied
\begin{equation}
\int_{t\in\mathcal{L}}t^{r-1}\widehat{c}(t)e^{-\Gamma^{+}(t)}dt=0,\,r=1,2,...,-\chi-1.
\label{index}
\end{equation}
Having obtained $g_{0}(z)$, $h_{0}(z)$, we are able to obtain $g(x,y)$ in \eqref{fun}.

The following steps summarize the way we can fully determine the stationary distribution: 
\begin{enumerate}
\item The $N_{1}\times N_{2}$ equations for $S_{a}$ involve $(N_{1}+1)\times(N_{2}+1)$ unknowns: $\pi(n_{1},n_{2})$, $n_{1}=0,1,\ldots,N_{1}-1$, $n_{1}=0,1,\ldots,N_{1}-1$, that refer to the states of $S_{a}$, and $N_{1}+N_{2}+1$ refer to the states that belong to the boundary of $S_{b}$ with $S_{a}$ (i.e., $\pi(N_{1},n_{2})$, $n_{2}=0,1,\ldots,N_{2}-1$), to the boundary of $S_{c}$ with $S_{a}$ (i.e., $\pi(n_{1},N_{2})$, $n_{1}=0,1,\ldots,N_{1}-1$), and to the boundary of $S_{d}$ with $S_{a}$ (i.e., $\pi(N_{1},N_{2})$). Thus, we further need $N_{1}+N_{2}+1$ equations that involve the unknowns that are associated with the states that belong $a)$ to the boundary of $S_{b}$ with $S_{a}$, $b)$ to the boundary of $S_{c}$ with $S_{a}$, and $c)$ to the boundary of $S_{d}$ with $S_{a}$ (see point 3 below).
\item We have expressed the generating functions of equilibrium probabilities that are associated with the states of subregions $S_{b}$, $S_{c}$, in terms of $g_{0}(x)$, $h_{0}(y)$. The generating function of the equilibrium probabilities that are associated with the states of subregion $S_{d}$ is obtained in terms of the solution of a Riemann boundary value problem. That solution, obtains $g_{0}(x)$, $h_{0}(y)$ in terms of $A(x,y)$, $B(x,y)$ and $C(x,y)$. The first two are known, and the third one contains the $N_{1}+N_{2}+1$ unknown probabilities that we mentioned in point 1. Thus, we need some additional equations. These additional equations are derived from the (integral)  expressions of $g_{0}(x)$, $h_{0}(y)$ as follows at steps 3 and 4.
\item Use (\ref{r2}), (\ref{r3}), to express the unknown probabilities in terms of the values of $g_{0}(x)$, $h_{0}(y)$ at point 0, i.e., $\pi(N_{1},0)=g_{0}(0)$, $\pi(0,N_{2})=h_{0}(0)$ and
\begin{equation}
\begin{array}{rl}
\pi(N_{1},n_{2})=&\lim_{x\to 0}[e_{n_{2}}(x)g_{0}(x)+t_{n_{2}}(x)],\,n_{2}=1,...,N_{2}-1,\vspace{2mm}\\
\pi(n_{1},N_{2})=&\lim_{y\to 0}[\tilde{e}_{n_{1}}(y)h_{0}(y)+\tilde{t}_{n_{1}}(y)],\,n_{1}=1,...,N_{1}-1.
\end{array}
\end{equation}
This procedure will provide $N_{1}+N_{2}$ equations. Note that the functions $e_{n_{2}}(x)$, $\tilde{e}_{n_{1}}(y)$, $t_{n_{2}}(x)$, $\tilde{t}_{n_{1}}(y)$ are rational functions, and taking the limit at point 0, we may have to apply several times the l'Hospital rule. This means that we need further to obtain the derivatives of $g_{0}(x)$, $h_{0}(y)$, i.e., the derivatives of integral expressions, which have to be evaluated numerically.
\item The normalization equation yields the last one:
\begin{equation*}
\begin{array}{c}
1=\sum_{n_{1}=0}^{N_{1}-1}\sum_{n_{2}=0}^{N_{2}-1}\pi(n_{1},n_{2})+\sum_{n_{1}=0}^{N_{1}-1}h_{n_{1}}(1)+\sum_{n_{2}=0}^{N_{2}-1}g_{n_{2}}(1)+g(1,1).
\end{array}
\label{norq}
\end{equation*}
\end{enumerate}
\subsection{Performance metrics}\label{perm}
Having obtained the equilibrium probabilities associated with subregion $S_{a}$, and the pgfs that refer to the equilibrium probabilities in subregions $S_{i}$, $i=b,c,d$, we are able to obtain useful performance metrics. More precisely,
\begin{equation}
\begin{array}{rl}
E(Q_{1})=&\sum_{n_{1}=1}^{N_{1}-1}\sum_{n_{2}=0}^{N_{2}-1}n_{1}\pi(n_{1},n_{2})+\sum_{n_{2}=0}^{N_{2}}\frac{d}{dx}g_{n_{2}}(x)|_{x=1}\\&+\sum_{n_{1}=1}^{N_{1}}n_{1}h_{n_{1}}(1)+\frac{\partial}{\partial x}g(x,1)|_{x=1},\vspace{2mm}\\
E(Q_{2})=&\sum_{n_{2}=1}^{N_{2}-1}\sum_{n_{1}=0}^{N_{1}-1}n_{2}\pi(n_{1},n_{2})+\sum_{n_{1}=0}^{N_{1}}\frac{d}{dy}h_{n_{1}}(y)|_{y=1}\\&+\sum_{n_{2}=1}^{N_{2}}n_{2}g_{n_{2}}(1)+\frac{\partial}{\partial y}g(1,y)|_{y=1}.
\end{array}\label{dddd}
\end{equation}
Here on, denote by $s_{k}^{(j)}(1,1)$ the $j-$th order derivative of a function $s(x,y)$ with respect to $x$ (resp. $y$) when $k=1$ (resp. $k=2$) at point $(x,y)=(1,1)$, $j\geq 1$. Similarly, denote by $v^{(j)}(1)$ the $j-$th order derivative of a function $v(x)$ with respect to $x$ at point $x=1$. Note that
\begin{displaymath}
\begin{array}{rl}
\frac{d}{dx}g_{n_{2}}(x)|_{x=1}:=g_{n_{2}}^{(1)}(1)=&r_{n_{2}}(1)+e_{n_{2}}(1)g_{0}^{(1)}(1),\,n_{2}=1,\ldots,N_{2},\vspace{2mm}\\
\frac{d}{dy}h_{n_{1}}(y)|_{y=1}:=h_{n_{2}}^{(1)}(1)=&\tilde{r}_{n_{1}}(1)+\tilde{e}_{n_{1}}(1)h_{0}^{(1)}(1),\,n_{1}=1,\ldots,N_{1},
\end{array}
\end{displaymath}
where \begin{displaymath}
\begin{array}{rl}
r_{n_{2}}(x)=&g_{0}(1)e_{n_{2}}^{(1)}(x)+t_{n_{2}}^{(1)}(x),\\
\tilde{r}_{n_{1}}(y)=&h_{0}(1)\tilde{e}^{(1)}_{n_{1}}(y)+\tilde{t}_{n_{1}}^{(1)}(y).\end{array}
\end{displaymath}
Moreover, setting $y=1$ in \eqref{fun} yields,
\begin{displaymath}
\begin{array}{rl}
g(x,1)=&\frac{A(x,1)}{x-\Psi(x,1)}g_{0}(x)+\frac{B(x,1)}{x-\Psi(x,1)}h_{0}(1)+\frac{C(x,1)}{x-\Psi(x,1)}.
\end{array}
\end{displaymath}
Differentiating with respect to $x$, letting $x\to 1$, applying L'Hospital's rule and having in mind that $E_{x}<0$ yields after some algebra
\begin{displaymath}
\begin{array}{rl}
g_{1}^{(1)}(1,1)=&\frac{A_{1}^{(2)}(1,1)g_{0}(1)+B_{1}^{(2)}(1,1)h_{0}(1)+C_{1}^{(2)}(1,1)}{-2E_{x}}+\frac{A_{1}^{(1)}(1,1)}{-E_{x}}g_{0}^{(1)}(1).
\end{array}
\end{displaymath}
Substituting back in \eqref{dddd} yields
\begin{equation}
\begin{array}{rl}
E(Q_{1})=\widehat{S}+g_{0}^{(1)}(1)[\frac{A_{1}^{(1)}(1,1)}{-E_{x}}+\sum_{n_{2}=0}^{N_{2}}e_{n_{2}}(1)],
\end{array}\label{ex1}
\end{equation}
where,
\begin{displaymath}
\begin{array}{rl}
\widehat{S}=&\sum_{n_{1}=1}^{N_{1}-1}\sum_{n_{2}=0}^{N_{2}-1}n_{1}\pi(n_{1},n_{2})+g_{0}(1)[\sum_{n_{2}=0}^{N_{2}}e_{n_{2}}^{(1)}(1)+\frac{A_{2}^{(1)}(1,1)}{-2E_{x}}]\\&+h_{0}(1)[\sum_{n_{1}=0}^{N_{1}}n_{1}\tilde{e}_{n_{1}}(1)+\frac{B_{1}^{(2)}(1,1)}{-2E_{x}}]+\sum_{n_{2}=0}^{N_{2}}t_{n_{2}}^{(1)}(1)\\&+\sum_{n_{1}=0}^{N_{1}}n_{1}\tilde{t}_{n_{2}}(1)+\frac{C_{1}^{(2)}(1,1)}{-2E_{x}}.
\end{array}
\end{displaymath}
A similar expression can be derived for $E(Q_{2})$. In deriving \eqref{ex1}, it is crucial to derive $g_{0}^{(1)}(1)$, $g_{0}(1)$, $h_{0}(1)$, which are given in terms of a solution of a non-homogeneous Riemann boundary value problem given in \eqref{solo1}. These expressions are given as complex integrals, and have to be numerically evaluated. 
\begin{remark}
The analysis for $\Psi(0,0)=0$, i.e., $p_{-1,-1}=0$, is slightly different; see \cite[Sections II.3.10-II.3.12]{coh1}. The essential differences between these two cases refer mainly to the kernel analysis (Theorems \ref{th1}, \ref{yui}), as well as on the derivation of integral expressions \eqref{ml1}-\eqref{ml3}. See subsection \ref{subb} for more details.

For the case $p_{-1,-1}=0$, we can also apply a different approach based on \cite{fay1}. This approach is simpler compared with the one considered in Section \ref{pre}, and can be also applied in case $p_{-1,-1}>0$. For more details, see Appendix \ref{fayo}. 
\end{remark}
\subsection{The case where $\Psi(0,0)=0$}\label{subb}
In the following, we focus only on the solution of (\ref{fun}) when $p_{-1,-1}=0$. The assumption $p_{-1,-1}=0$ leads to some essential differences in the analysis we follow. The following lemma provides information about the zeros of the kernel.
\begin{lemma}\label{lemaw}\begin{enumerate}
\item If $E_{x}+E_{y}<0$, and for fixed $s\in\mathbb{C}$ such that $|s|=1$, the kernel $R(gs,gs^{-1})$ has in $|g|\leq1$ exactly two zeros, of which one is identically zero. The other one satisfies
\begin{equation}
\begin{array}{c}
g(s)=\frac{p_{-1,0}s^{-1}+p_{0,-1}s}{1-\sum_{\substack{ i=-1\\i+j\geq0}}^{1}\sum_{j=-1}^{1}p_{i,j}g(s)^{i+j}s^{i-j}}.
\end{array}
\label{cas}
\end{equation}
\item For $|s|=1$, if $g(s)$ is a zero, so is $-g(-s)$.
\end{enumerate}
\end{lemma}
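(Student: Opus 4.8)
The plan is to adapt the Rouch\'e-type count behind Theorem \ref{th1} (Appendix \ref{nnrw}) to the degeneracy $p_{-1,-1}=0$, and to obtain \eqref{cas} together with part 2 by direct computation. I would dispose of part 2 first, since it is purely algebraic and independent of the value of $p_{-1,-1}$: in the representation \eqref{zzz}, the substitution $(g,s)\mapsto(-g,-s)$ multiplies the generic term $p_{i,j}g^{i+j+2}s^{i-j}$ by $(-1)^{(i+j+2)+(i-j)}=(-1)^{2i+2}=1$, so $R(gs,gs^{-1})=R((-g)(-s),(-g)(-s)^{-1})$. Consequently, whenever $g(-s)$ is a zero of the kernel at argument $-s$, the value $-g(-s)$ is a zero at argument $s$, which is exactly part 2.

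For part 1, I would first isolate the zero at the origin. Writing $R(gs,gs^{-1})=g^2-\Psi(gs,gs^{-1})$ as a polynomial in $g$, its constant term is $-p_{-1,-1}$; setting $p_{-1,-1}=0$ makes it vanish, so $g=0$ is a zero for every $s$ with $|s|=1$ (the ``identically zero'' root). To extract \eqref{cas} for the remaining zero I would retain the form \eqref{fty}: the denominator $1-\sum_{i+j\ge0}p_{i,j}g^{i+j}s^{i-j}$ is nonvanishing for $|g|\le1$, $|s|=1$ (the bound preceding Theorem \ref{th1} uses only $p_{-1,0}+p_{0,-1}>0$ and remains valid), so $R(gs,gs^{-1})=0$ is equivalent to $g^2=g(p_{-1,0}s^{-1}+p_{0,-1}s)/\big(1-\sum_{i+j\ge0}p_{i,j}g^{i+j}s^{i-j}\big)$; cancelling the factor $g$ for $g\neq0$ gives precisely \eqref{cas}.

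It remains to count zeros, and this is where the work lies: I must show that apart from $g=0$ exactly one further zero lies in $|g|\le1$, so that the total is two, in agreement with Theorem \ref{th1}. I would apply Rouch\'e's theorem on $|g|=1$, comparing $R(gs,gs^{-1})=g^2-\Psi(gs,gs^{-1})$ with $g^2$: for $|g|=|s|=1$ the triangle inequality gives $|\Psi(gs,gs^{-1})|\le\sum_{i,j}p_{i,j}=1=|g^2|$, and for $s\neq\pm1$ the inequality is strict because the summands $p_{i,j}g^{i+j+2}s^{i-j}$ cannot all be co-phasal; Rouch\'e then yields exactly two zeros in $|g|<1$, one being $g=0$. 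The genuine obstacle is $s=\pm1$, where the bound degenerates to equality and a zero sits on $|g|=1$: a direct evaluation gives $R(g,g)\big|_{g=1}=1-\sum_{i,j}p_{i,j}=0$, so $g=1$ is a boundary zero at $s=1$ (and $g=-1$ at $s=-1$ by part 2). To certify the count here I would localise near $g=1$ and invoke the drift condition $E_x+E_y<0$ through $\frac{d}{dg}R(g,g)\big|_{g=1}=-(E_x+E_y)>0$, following \cite[Lemma II.3.2.2.2]{coh1}, to show that on $[0,1]$ the function $R(g,g)$ has no zero other than $g=0$ and $g=1$ and that no zero crosses $|g|=1$ as $s$ passes $\pm1$. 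This boundary analysis --- not the interior estimate --- is the crux; a minor additional subtlety, that in the $p_{-1,-1}=0$ geometry the nonzero zero may itself coincide with the origin at isolated values of $s$ (reflecting that $\mathcal{S}_1$ can reach $x=0$, cf.\ Figure \ref{conto11}), is handled by counting with multiplicity.
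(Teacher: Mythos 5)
Your part 2 is correct (the invariance of $R(gs,gs^{-1})$ under $(g,s)\mapsto(-g,-s)$ is exactly the parity symmetry the paper uses), and so are the two algebraic reductions in part 1: the constant term of $R$ as a polynomial in $g$ is $-p_{-1,-1}$, so $g=0$ is a root for every $s$, and cancelling the factor $g$ in \eqref{fty} gives \eqref{cas} (the nonvanishing of the denominator indeed survives $p_{-1,-1}=0$, since $E_{x}+E_{y}<0$ forces $p_{-1,0}+p_{0,-1}>0$). The gap is in the zero count, which is the entire substance of part 1. First, your strict inequality $|\Psi(gs,gs^{-1})|<|g^{2}|$ on $|g|=1$ for $s\neq\pm1$ is only asserted: equality in the triangle inequality requires the terms $p_{i,j}g^{i+j+2}s^{i-j}$ to be co-phasal, and whether co-phasality forces $s=\pm1$ depends on which $p_{i,j}$ are positive. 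For instance, if only $p_{1,0},p_{0,0},p_{-1,0}$ are positive (with $p_{-1,0}>p_{1,0}$, so $E_{x}+E_{y}<0$), all terms are co-phasal at $g=\bar{s}$ for every $|s|=1$, and the second zero lies on $|g|=1$ for all $s$, so your strictness claim fails while the lemma's conclusion still holds. Second, and more seriously, the case $s=\pm1$ is not resolved by your plan: showing that $R(g,g)$ has no real zero in $[0,1]$ besides $0$ and $1$ says nothing about the complex zeros in the closed disk, and the claim that no zero crosses $|g|=1$ as $s$ passes $\pm1$ is precisely what needs proof --- it does not follow from $\frac{d}{dg}R(g,g)|_{g=1}=-(E_{x}+E_{y})>0$ alone.

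The paper closes both holes with one device your proposal lacks. After cancelling the factor $g$, write the remaining equation as $m_{1}(g)=m_{2}(g,s)$ with $m_{1}(g)=g$ and $m_{2}(g,s)=gp_{0,0}+g^{2}sp_{1,0}+s^{-1}p_{-1,0}+g^{3}p_{1,1}+g^{2}s^{-1}p_{0,1}+gs^{-2}p_{-1,1}+sp_{0,-1}+gs^{2}p_{1,-1}$; see \eqref{hel}. Since $m_{2}$ has nonnegative coefficients, $|m_{2}(g,s)|\leq m_{2}(|g|,1)$ for every $|s|=1$, and the Rouch\'e comparison is run not on $|g|=1$, where $m_{1}(1)=m_{2}(1,1)=1$ degenerates, but on the slightly larger circle $|g|=1+\epsilon$. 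A first-order Taylor expansion at $g=1$ shows that $m_{1}(1+\epsilon)>m_{2}(1+\epsilon,1)$ holds for small $\epsilon>0$ exactly when $m_{1}^{\prime}(1)>m_{2}^{\prime}(1,1)$, i.e., when $p_{-1,0}+p_{0,-1}>p_{1,0}+2p_{1,1}+p_{0,1}$ (see \eqref{hel2}), which is precisely $E_{x}+E_{y}<0$ once $p_{-1,-1}=0$. Rouch\'e on $|g|=1+\epsilon$ then yields exactly one root of $m_{1}=m_{2}$ in the closed unit disk, uniformly for all $|s|=1$ --- including $s=\pm1$, where that root is identified as $g=\pm1$ and its simplicity follows from $m_{2}^{\prime}(1,1)<1$. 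This single estimate replaces both your co-phasality claim and your boundary-crossing argument; without it, or an equivalent, the count in part 1 is not established.
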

\begin{proof}
See Appendix \ref{ape}
\end{proof}
Next, we focus on some technical issues that are related with the curves $\mathcal{S}_{1}$, $\mathcal{S}_{2}$ and are due to the fact that $\Psi(0,0)=0$ (see the discussion below Lemma \ref{lemas} and Remark \ref{rema}).
\begin{lemma}\label{lemas}Let the curves 
\begin{displaymath}
\mathcal{S}_{1}:=\{x:x=g(s)s,|s|=1\},\,\,\mathcal{S}_{2}:=\{y:y=g(s)s^{-1},|s|=1\},
\end{displaymath}
Then, if $s$ traverses the unit circle $|s|=1$ once, each of the curves is traversed twice in the opposite direction relative to each other. 
\end{lemma}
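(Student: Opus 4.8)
The plan is to turn the antipodal symmetry recorded in part~2 of Lemma~\ref{lemaw} into a precise statement about how the parametrisations $s\mapsto x(s):=g(s)s$ and $s\mapsto y(s):=g(s)s^{-1}$ wrap the unit circle onto $\mathcal{S}_1$ and $\mathcal{S}_2$. The decisive first step is to promote the symmetry ``$g(s)$ a zero $\Rightarrow -g(-s)$ a zero'' to the identity $g(-s)=-g(s)$ on $|s|=1$. By Lemma~\ref{lemaw}, for each fixed $s$ with $|s|=1$ the kernel $R(gs,gs^{-1})$, viewed as a polynomial in $g$, has inside $|g|\le 1$ exactly the trivial root $g=0$ together with a single nontrivial root $g(s)$ given by \eqref{cas}. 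By the same lemma $-g(-s)$ is again a zero of this polynomial, and $|-g(-s)|=|g(-s)|\le 1$. For every $s$ at which the numerator $p_{-1,0}s^{-1}+p_{0,-1}s$ of \eqref{cas} does not vanish one has $g(s)\ne 0$, hence $-g(-s)\ne 0$; uniqueness of the nontrivial in-disc root then forces $-g(-s)=g(s)$. Since the exceptional values of $s$ are at most two isolated points of $|s|=1$, the identity $g(-s)=-g(s)$ extends to the whole circle by continuity. Thus $g$ is \emph{odd} on $|s|=1$.

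Granting oddness, the double covering is immediate:
\[
x(-s)=g(-s)(-s)=(-g(s))(-s)=g(s)s=x(s),\qquad y(-s)=g(-s)(-s)^{-1}=g(s)s^{-1}=y(s),
\]
so both $x(\cdot)$ and $y(\cdot)$ take equal values at antipodal points of $|s|=1$. Writing $s=e^{i\phi}$, antipodality is the identification $\phi\sim\phi+\pi$; because $x(e^{i\pi})=x(e^{i0})$ and $y(e^{i\pi})=y(e^{i0})$, the restriction of each map to $\phi\in[0,\pi]$ already traces a closed loop, namely the whole of $\mathcal{S}_1$ (resp.\ $\mathcal{S}_2$). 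Differentiating $x(\phi+\pi)=x(\phi)$ gives $x'(\phi+\pi)=x'(\phi)$, so the second passage $\phi\in[\pi,2\pi]$ retraces the same loop with the same velocity, i.e.\ in the same sense. Hence, as $s$ runs once around $|s|=1$, each of $\mathcal{S}_1$, $\mathcal{S}_2$ is described exactly twice.

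It remains to compare the senses of $\mathcal{S}_1$ and $\mathcal{S}_2$, which is where ``in the opposite direction relative to each other'' enters. Here I would use the conjugation symmetry $g(\bar s)=\overline{g(s)}$ on $|s|=1$, which follows from \eqref{cas} and the reality of the $p_{i,j}$ by the same uniqueness argument. It gives $\overline{x(e^{i\phi})}=x(e^{-i\phi})$ and $\overline{y(e^{i\phi})}=y(e^{-i\phi})$, so both curves are symmetric about the real axis and both leave the common real point $g(1)>0$ at $\phi=0$. The symmetry also makes $G'(0)$ purely imaginary, where $G(\phi):=g(e^{i\phi})$; writing $G'(0)=i\beta$ with $\beta$ real, one finds $x'(0)=i(\beta+g(1))$ and $y'(0)=i(\beta-g(1))$. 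Under the non-degeneracy keeping $\mathcal{S}_1,\mathcal{S}_2$ smooth at $s=1$ (so that $|\beta|<g(1)$), these tangents point in opposite vertical directions, exactly as the factors $s$ and $s^{-1}$ suggest; hence $\mathcal{S}_1$ is traversed in the sense opposite to $\mathcal{S}_2$, in complete analogy with the non-degenerate case $p_{-1,-1}>0$ treated after Theorem~\ref{th1}.

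I expect the genuine obstacle to be the first step: one must be sure that the nontrivial in-disc root is \emph{single-valued} and, away from the isolated numerator zeros, \emph{non-vanishing} on $|s|=1$, for only then does the branch exchange allowed by Lemma~\ref{lemaw} collapse to the clean oddness $g(-s)=-g(s)$ rather than to a swap between the trivial and nontrivial roots. The behaviour at the exceptional points where $g(s)=0$ (so that $x$ or $y$ meets the origin) is precisely the source of the ``technical issues'' announced before the lemma, and must be absorbed by a continuity/limiting argument; once oddness is secured on the full circle, the double covering and the opposite orientation follow from the elementary substitutions above.
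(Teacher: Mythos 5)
Your proposal is correct in substance, but it takes a genuinely different route from the paper's own proof. The paper proves the lemma by taking logarithms in \eqref{cas} and doing winding-number bookkeeping: since the denominator of \eqref{cas} never vanishes on $|s|=1$ (so contributes index $0$) while the numerator $p_{-1,0}+p_{0,-1}s^{2}$ has $0$, $1$ or $2$ zeros in the unit disc, the indices of $g(s)s$ and $g(s)s^{-1}$ along $|s|=1$ always differ by $2$ and are (generically) $0$ and $\pm2$; the antipodal identity $g(s)s=g(t)t$, $g(s)s^{-1}=g(t)t^{-1}$ for $t=-s$, i.e.\ the double traversal, and the opposite senses are then read off from these indices. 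You instead promote part 2 of Lemma~\ref{lemaw} to the oddness identity $g(-s)=-g(s)$ via uniqueness of the nontrivial in-disc root (handling the at most two points where $g$ vanishes by continuity), and obtain the antipodal identification and the double covering directly; in fact your uniqueness argument is exactly what makes the paper's terse ``thus'' step rigorous, so on this half of the statement your route is cleaner and more elementary. What the paper's index computation buys, and your argument does not, is the location of the origin relative to each contour --- the trichotomy in $p_{0,-1}\gtrless p_{-1,0}$ stated right after the lemma, which is needed later to set up $x(z)$, $y(z)$. (Incidentally, the two generic rows of the paper's index table appear to be interchanged relative to that trichotomy, which does not affect either argument.)

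Two caveats on your orientation step, both fixable. First, $|\beta|<g(1)$ is not a consequence of smoothness of the contours at $s=1$: if $|\beta|>g(1)$ both tangents are nonzero, merely parallel. Differentiating the kernel relation at $s=1$, $g=1$ gives $\beta=g'(1)=(E_{y}-E_{x})/(E_{x}+E_{y})$, so $|\beta|<1=g(1)$ holds if and only if $E_{x}<0$ \emph{and} $E_{y}<0$; this is the ergodicity assumption standing throughout Section~\ref{pre}, strictly stronger than the lemma's hypothesis $E_{x}+E_{y}<0$, and it is consistent with the paper's restriction, made just before the lemma, to the case where $\mathcal{S}_{1}$, $\mathcal{S}_{2}$ are Jordan contours (when, say, $E_{x}>0$ one can check on examples that a contour self-intersects). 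Second, opposite tangent vectors at a single common point do not in general imply opposite rotational senses: two externally tangent circles, both traversed counterclockwise, have opposite tangents at the point of tangency. Here the implication is valid because both contours lie in the closed unit disc and pass through $g(1)=1$, and $\mathrm{Re}\,w\leq|w|\leq1$ with equality only at $w=1$ forces $1$ to be the rightmost point of \emph{both} curves, so both lie on the same side of the common vertical tangent; with that observation, upward motion at $1$ means counterclockwise and downward means clockwise, and your conclusion stands. Make these two points explicit and the proof is complete.
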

\begin{proof}

This result is immediately deduced by using \eqref{cas}: Taking the logarithm for $|s|=1$ we have
\begin{displaymath}
\begin{array}{c}
\log\{g(s)\}=\log\{p_{-1,0}+p_{0,-1}s^{2}\}-\log\{s\}-\log\{1-\sum_{\substack{ i=-1\\i+j\geq0}}^{1}\sum_{j=-1}^{1}p_{i,j}g^{i+j}s^{i-j}\}.
\end{array}
\end{displaymath}
Since the argument of the last logarithm never vanishes (see above Theorem \ref{th1}), and $p_{-1,0}+p_{0,-1}s^{2}$, and $s$, are both analytic on $|s|\leq 1$, we have
\begin{displaymath}
\begin{array}{rl}
ind_{|s|=1}g(s)s=0,&ind_{|s|=1}g(s)s^{-1}=-2,\,\text{for }p_{0,-1}>p_{-1,0},\\
ind_{|s|=1}g(s)s=1,&ind_{|s|=1}g(s)s^{-1}=-1,\,\text{for }p_{0,-1}=p_{-1,0},\\
ind_{|s|=1}g(s)s=2,&ind_{|s|=1}g(s)s^{-1}=0,\,\text{for }p_{0,-1}<p_{-1,0},
\end{array}
\end{displaymath}
thus, for $t=se^{i\pi}$, $|s|=1$, $g(s)s=g(t)t$, and $g(s)s^{-1}=g(t)t^{-1}$. \hfill$\square$
\end{proof}

Now we have to show that $\mathcal{S}_{1}$, $\mathcal{S}_{2}$ are both simple and smooth. Note that in case $\Psi(0,0)>0$, Theorem \ref{yui} guaranties the conditions under which $\mathcal{S}_{1}$, $\mathcal{S}_{2}$ are both simple and smooth. In case $\Psi(0,0)=0$, it is not possible to prove for general values of the system parameters that the contours $\mathcal{S}_{1}$ and $\mathcal{S}_{2}$ are simply connected. This is due to the variety of the positions of $x=0$, $y=0$ with respect to $\mathcal{S}_{1}$ and $\mathcal{S}_{2}$, respectively. Moreover, this problem leads to a different formulation of functions $x(z)$, $y(z)$, $z\in \mathcal{L}$ compared to the case where $\Psi(0,0)>0$. Then, we have the following cases:
\begin{enumerate}
\item If $p_{0,-1}>p_{-1,0}\Rightarrow$ $x=0\in\mathcal{S}_{1}^{+}$, and $y=0\in\mathcal{S}_{2}^{-}$.
\item If $p_{0,-1}=p_{-1,0}\Rightarrow$ $x=0\in\mathcal{S}_{1}$, and $y=0\in\mathcal{S}_{2}$.
\item If $p_{0,-1}<p_{-1,0}\Rightarrow$ $x=0\in\mathcal{S}_{1}^{-}$, and $y=0\in\mathcal{S}_{2}^{+}$.
\end{enumerate}

For each of the three cases mentioned above, we shall consider the problem formulated by the statements $1.,...,5.$ in subsection \ref{ker} and determine the functions $x(z)$, $y(z)$, $z\in \mathcal{L}$. So we restrict to the case where $\mathcal{S}_{1}$, $\mathcal{S}_{2}$ are both Jordan contours, and assume without loss of generality that $p_{0,-1}<p_{-1,0}$. The following lemma summarizes the solution of the corresponding boundary value problem described by the statements $1,...,5$ in subsection \ref{ker} (but now for $\Psi(0,0)=0$, $p_{0,-1}<p_{-1,0}$), and provides information about $x(\cdot)$, $y(\cdot)$. 
 
\begin{lemma}\label{olpa}
For $\Psi(0,0)=0$, the solution of the problem formulated by the statements $1,...,5$ (see subsection \ref{ker}) is unique and:
\begin{displaymath}
x^{+}(z)=g(e^{\frac{i\lambda(z)}{2}})e^{\frac{i\lambda(z)}{2}},\,\,\,\,y^{-}(z)=g(e^{\frac{i\lambda(z)}{2}})e^{-\frac{i\lambda(z)}{2}},\,z\in\mathcal{L},
\end{displaymath}
with $|x(z)|<1$, $z\in\mathcal{L}^{+}\cup\mathcal{L}$, $|y(z)|<1$, $z\in\mathcal{L}^{-}\cup\mathcal{L}$. Then,
\begin{displaymath}
\begin{array}{rl}
x(z)=&e^{\frac{1}{2\pi i}\int_{\zeta\in\mathcal{L}}[log[\sqrt{\zeta}g(e^{\frac{1}{2}i\lambda(\zeta)})](\frac{\zeta+z}{\zeta-z}-\frac{\zeta+1}{\zeta-1})\frac{d\zeta}{\zeta}]},\,z\in\mathcal{L}^{+},\\
y(z)=&\frac{1}{z}e^{-\frac{1}{2\pi i}\int_{\zeta\in\mathcal{L}}[log[\sqrt{\zeta}g(e^{\frac{1}{2}i\lambda(\zeta)})](\frac{\zeta+z}{\zeta-z}-\frac{\zeta+1}{\zeta-1})\frac{d\zeta}{\zeta}]},\,z\in\mathcal{L}^{-}.
\end{array}
\end{displaymath}
The relation for the determination of $\mathcal{L}$ and $\lambda(z)$, $z\in\mathcal{L}$ (with $\lambda(1)=0$) is
\begin{equation}
e^{i\lambda(z)}=ze^{\frac{2}{2i\pi}\int_{\zeta\in\mathcal{L}}[log[\sqrt{\zeta}g(e^{\frac{1}{2}i\lambda(\zeta)})](\frac{\zeta+z}{\zeta-z}-\frac{\zeta+1}{\zeta-1})\frac{d\zeta}{\zeta}]},\,z\in \mathcal{L}.
\label{int}
\end{equation}
\end{lemma}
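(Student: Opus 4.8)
The plan is to follow the conformal-mapping construction of \cite[Sections II.3.10--II.3.12]{coh1}, adapting it to the double-traversal phenomenon uncovered in Lemma \ref{lemas}. The starting point is the boundary relation: for $z\in\mathcal{L}$ the pair $(x^{+}(z),y^{-}(z))$ must be a zero-pair of the kernel, hence must admit the symmetric representation $x=g(s)s$, $y=g(s)s^{-1}$ with $|s|=1$. Since, by Lemma \ref{lemas}, each of $\mathcal{S}_{1}$, $\mathcal{S}_{2}$ is described \emph{twice} as $s$ runs once over $|s|=1$, the natural parameter along $\mathcal{L}$ is the half-angle: I would set $s=e^{i\lambda(z)/2}$, which immediately yields $x^{+}(z)=g(e^{i\lambda(z)/2})e^{i\lambda(z)/2}$ and $y^{-}(z)=g(e^{i\lambda(z)/2})e^{-i\lambda(z)/2}$, with $\lambda(\cdot)$ the unknown boundary-correspondence function, real on $\mathcal{L}$ and normalised by $\lambda(1)=0$.

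First I would construct $x(\cdot)$ and $y(\cdot)$ as sectionally holomorphic functions whose boundary moduli are prescribed by this representation. Taking logarithms, $\log x^{+}(z)$ and $\log y^{-}(z)$ are boundary values of functions holomorphic in $\mathcal{L}^{+}$ and $\mathcal{L}^{-}$, respectively; because in the present case ($p_{0,-1}<p_{-1,0}$) we have $x=0\in\mathcal{S}_{1}^{-}$ and $y=0\in\mathcal{S}_{2}^{+}$, the map $x(\cdot)$ is zero-free on $\mathcal{L}^{+}\cup\mathcal{L}$, whereas $y(\cdot)$ carries a single simple zero at $z=\infty$, so that $zy(z)$ is zero-free. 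I would then represent $\log x(z)$ and $\log[zy(z)]$ by Cauchy-type integrals over $\mathcal{L}$ with the kernel $\frac{\zeta+z}{\zeta-z}-\frac{\zeta+1}{\zeta-1}$, which enforces regularity together with the normalisation at $z=1$. The density is $\log[\sqrt{\zeta}\,g(e^{i\lambda(\zeta)/2})]$, and the factor $\sqrt{\zeta}$ is exactly what is needed to absorb the non-zero winding numbers $\mathrm{ind}_{|s|=1}g(s)s=2$ and $\mathrm{ind}_{|s|=1}g(s)s^{-1}=0$ recorded in Lemma \ref{lemas}, rendering the integrand single-valued on $\mathcal{L}$. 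This delivers the displayed formulas for $x(z)$ and $y(z)$.

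Applying the Plemelj--Sokhotski formulas to these integrals, I would recover the boundary values $x^{+}(z)$, $y^{-}(z)$ in the stated form and, on $\mathcal{L}$, obtain the relation \eqref{int} for $e^{i\lambda(z)}$, the factor $2$ in its exponent being the arithmetic counterpart of the half-angle $\lambda/2$. As in the remark following Lemma \ref{kopa}, \eqref{int} is a Theodorsen-type integral equation determining $\mathcal{L}=\{z=\rho(\phi)e^{i\phi}\}$ and $\lambda$ simultaneously. It then remains to verify the statements $1,\dots,5$ of subsection \ref{ker}: regularity is immediate from the Cauchy-integral representation, the zero/pole structure at $z=\infty$ follows from the $1/z$ prefactor in $y$, and $x(z)=f(y(z))$ holds by construction since both boundary values issue from the same parameter $s=e^{i\lambda(z)/2}$. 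Univalence onto $\mathcal{S}_{1}^{+}$, $\mathcal{S}_{2}^{+}$ I would obtain from the principle of corresponding boundaries, using the index bookkeeping of Lemma \ref{lemas} to confirm that $\mathcal{L}$ is mapped one-to-one onto $\mathcal{S}_{1}$, $\mathcal{S}_{2}$ with the correct orientation; uniqueness is then standard for conformal maps under the imposed normalisation.

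The hard part will be precisely this half-angle/$\sqrt{\zeta}$ bookkeeping. Because $\Psi(0,0)=0$ forces the non-trivial indices of Lemma \ref{lemas}, the integral representation used for $\Psi(0,0)>0$ in Lemma \ref{kopa} becomes multi-valued, and the delicate point is to show that introducing $\sqrt{\zeta}$ together with $\lambda/2$ restores single-valuedness while still producing a genuinely univalent map that lands in $\mathcal{S}_{1}^{+}$ (with $x=0$ \emph{outside}) and $\mathcal{S}_{2}^{+}$ (with $y=0$ \emph{inside}). This case-dependent positioning of $x=0$, $y=0$ is what makes the argument more subtle than in the $\Psi(0,0)>0$ setting, and I would handle it exactly along the lines of \cite[Sections II.3.10--II.3.12]{coh1}.
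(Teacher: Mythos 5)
Your proposal is correct and takes essentially the same route as the paper: the paper's proof of Lemma \ref{olpa} is simply an appeal to the construction in \cite[Sections II.3.10--II.3.11]{coh1}, which is exactly what you reconstruct (the half-angle parametrisation $s=e^{i\lambda(z)/2}$ forced by the double traversal of Lemma \ref{lemas}, the $\sqrt{\zeta}$ density in the Cauchy-type integrals, Plemelj--Sokhotski, and the corresponding-boundaries principle). Your additional bookkeeping --- placing $x=0\in\mathcal{S}_{1}^{-}$ and $y=0\in\mathcal{S}_{2}^{+}$ under the standing assumption $p_{0,-1}<p_{-1,0}$, so that $x(\cdot)$ is zero-free on $\mathcal{L}^{+}\cup\mathcal{L}$ while $y(\cdot)$ has its simple zero at $z=\infty$ --- is consistent with the stated integral formulas and with the factor $2$ in \eqref{int}.
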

\begin{proof}
For the proof see \cite[Sections II.3.10-II.3.11]{coh1}.
\end{proof}
The rest of the analysis is similar to the one given in subsection \ref{solution}.
\begin{remark}\label{rema}
 The expressions given in Lemma \ref{olpa} are slightly different with those given in case $\Psi(0,0)>0$ (see Lemma \ref{kopa}), due to the fact that for the present case, the contours $\mathcal{S}_{1}$, $\mathcal{S}_{2}$ are traversed twice if $s$ traverse the unit circle once. Thus, if $z$ traverse $\mathcal{L}$ once, then $\lambda(z)$ changes with $2\pi$.
\end{remark}
\begin{remark}
Note that by focusing on the symmetric case, i.e., $\Psi(x,y)=\Psi(y,x)$ ($p_{1,0}=p_{0,1}$, $p_{-1,0}=p_{0,-1}$, $p_{1,-1}=p_{-1,1}$), it is easily verified that $\frac{d}{dz}\Psi(z,0)|_{z=0}=\frac{d}{dz}\Psi(0,z)|_{z=0}$, thus $x=0\in\mathcal{S}_{1}$, $y=0\in\mathcal{S}_{2}$. In this case $\mathcal{L}$ is a circle with radius $1/2$ and center $1/2$; see \cite[Sect. II.3.12]{coh1}.
\end{remark}
\section{Applications}\label{appl}
In the following, we present two novel queueing models, which are described under our modelling framework. These models have applications in the flow-level performance of wireless networks with user mobility, and in queue-based RA networks. Moreover, in subsection \ref{appl2}, we observe that our framework can be extended to partially homogeneous \textbf{non-neighbour} RWQP.
\subsection{The two-queue LDGPS system with impatience}\label{appl2}
The LDGPS system with impatience serves as a basic model to investigate the impact of inter-cell mobility (i.e., the users may move out from the current cell during their communication) on data traffic performance
in dense networks with small cells \cite{sima,bonald}. In this setting, impatience models the user mobility. 

However, we have to note that impatience in PS systems has also been used to model random deadlines. In such a case, an arriving job must complete its service in a given time period, otherwise, it abandons the queue and therefore does not complete service. A typical example is the timeout of a TCP flow through the Internet. There, impatience refers to the expiration of a random deadline and subsequent reneging of the flow \cite{boyer}; see also \cite{gromoll,guilzwart}. 

In \cite{bonald}, the authors studied several models for characterizing the capacity and evaluating the flow-level performance of wireless networks carrying
elastic data transfers; see also \cite{bor2,bor1,guu} for similar studies. This is the \textit{Proportional Fair} (PF) sharing policy, which has been also widely used in commercial systems \cite{vis}. PF policy is an opportunistic scheduling scheme, which is often implemented in base stations and where the service capacity is no longer fixed, but increases as the number of users increases. Such a scenario gives rise to queueing models with scalable service capacity. More precisely, the flow-level performance of the PF policy can be evaluated in terms of PS models where the service rate varies with the total number of users. The
state-dependent service rate accounts for the fact that the throughput gains achieved
by channel-aware scheduling increase with the degree of multi-user diversity \cite{bonald2}. 

Thus, consider a single cell model with two classes of users served by a single base station (BS). We assume that the time is slotted. Class-$k$ users submit file transfer requests according to a Bernoulli process with parameter $\lambda_{k}(n_{1},n_{2})$, provided that there are $n_{k}$, $k=1,2$ backlogged packets (Note that BS provider may share information among users in a real time fashion regarding each user traffic in BS).
Class $k$-users have i.i.d. transmission requests that are geometrically distributed with parameter $\mu_{k}(n_{k})$ ($\bar{\mu}_{k}(n_{k})=1-\mu_{k}(n_{k})$).
 We further assume that the BS can simultaneously support at most $N_{1}$, $N_{2}$ user requests. There are plenty of factors that affect the maximum number of connected users in a cell. It will depend on the vendor implementation, memory and processing allocated, numbers of active bearers, etc. In an LTE enodeB (BS) it is estimated that in practice up to 100 devices (users) can be in RRC-Connected state \cite{sauta}, i.e., they are served by the BS. 

Moreover, the BS is employing the following service scheme: Up to $N_{1}$ jobs of type 1 and up to $N_{2}$  of type 2 file transfer requests are allowed to share the BS, and the BS serves a type 1 (resp. 2) request with probability $\beta_{1}(n_{1},n_{2})$ (resp. $\beta_{2}(n_{1},n_{2})$) with $\beta_{1}(n_{1},n_{2})+\beta_{2}(n_{1},n_{2})=1$, and where for $k=1,2$, 
\begin{equation*}
\begin{array}{rl}
\beta_{k}(n_{1},n_{2})=&\left\{\begin{array}{ll}
\beta_{k}(N_{1},n_{2}),&(n_{1},n_{2})\in S_{b},\\
\beta_{k}(n_{1},N_{2}),&(n_{1},n_{2})\in S_{c},\\
\beta_{k}(N_{1},N_{2})=\beta_{k},&(n_{1},n_{2})\in S_{d},
\end{array}\right.
\end{array}\label{bbb}
\end{equation*}
and $\beta_{1}(n_{1},0)=1$, $n_{1}>0$, $\beta_{2}(0,n_{2})=1$, $n_{1}>0$. The simplest scheduling function for such a system may has the form: 
\begin{equation*}
\begin{array}{c}
\beta_{k}(n_{1},n_{2})=\frac{n_{k}}{n_{1}+n_{2}},\,k=1,2,\,(n_{1},n_{2})\in S_{a},\vspace{2mm}\\
\beta_{1}(n_{1},n_{2})=\frac{n_{1}}{n_{1}+N_{2}},\,\beta_{2}(n_{1},n_{2})=\frac{N_{2}}{n_{1}+N_{2}},\,(n_{1},n_{2})\in S_{b},\vspace{2mm}
\\
\beta_{1}(n_{1},n_{2})=\frac{N_{1}}{n_{1}+N_{2}},\,\beta_{2}(n_{1},n_{2})=\frac{n_{2}}{N_{1}+n_{2}},\,(n_{1},n_{2})\in S_{c},\vspace{2mm}\\
\beta_{k}(n_{1},n_{2})=\frac{N_{k}}{N_{1}+N_{2}},\,k=1,2,\,(n_{1},n_{2})\in S_{d}.
\end{array}\label{scf}
\end{equation*}  

The \textit{user mobility} is modelled as follows: a user of type $k$ departs from the cell after a geometrically distributed time period with parameter $\theta_{k}(n_{k})$, $k=1,2$ (it remains in the cell with probability $\bar{\theta}_{k}(n_{k})=1-\theta_{k}(n_{k})$).

Due to the slotted-time framework, we have to appropriately schedule the way the events occur (i.e., departures due to service completions, departures due to user mobility, and arrivals). 
We assume that departures, either due to service completions, or due to user mobility, are scheduled at the beginning of a slot, while file transfer requests of either type at the end of the slot. In case a user's departure due to mobility occurs simultaneously with a service completion, we assume that the former one occurs first. However, since all the system parameters depend on the user state, a possible departure due to a user mobility will affect the service completion that follows (e.g., given the state of the system $\underline{n}$, in case there is a departure due to mobility of a user of type 1 (w.p. $\theta_{1}(n_{1})\bar{\theta}_{2}(n_{2})$), and the server chooses a type 2 user to serve, then, this will happen with probability $\beta_{2}(\underline{n}-\underline{1}_{1})$). This means that the service scheduling function will take into account the possible departure due to mobility. On the other hand, the file transfer requests take into account only the state of the network at the beginning of the slot (this assumption does not affect at all the employed analysis). 

Then, although the one-step transition probabilities have also the same form as in \eqref{trans}, they are more complicated. In Appendix \ref{onestep} we have derived the one-step transition probabilities for such a case. Moreover, due to the fact that we can have more than one departure during a slot, the resulting two-dimensional random walk is \textit{no longer of nearest-neighbour character}. 

In particular, the stochastic vectors $\{(\xi_{1n}^{(j)}(Q_{1,n},Q_{2,n}),\xi_{2n}^{(j)}(Q_{1,n},Q_{2,n})),n\geq0\}$ have range space $\{-2,-1,0,1\}\times\{-2,-1,0,1\}$. Note that the method introduced in the seminal books \cite{coh1,fay1} considered cases where at most one job of either type can depart. 
In our case we can see that the theory of boundary value problems \cite{coh1} can also be adapted to consider the case of at most two departures per job type. Thus, it seems that the analysis can also be adapted to the case where we can have a fixed number of departures (i.e., \textit{bounded batch departures}) from either type. To our best knowledge such a scenario has never been reported in the related literature, and reveals the flexibility of the analysis method. 

By applying the analysis in subsection \ref{gen}, the corresponding $g_{n_{2}}(x)$, $n_{2}=1,2,\ldots,N_{2}$, and $h_{n_{1}}(x)$, $n_{1}=1,2,\ldots,N_{1}$ have the same form as in \eqref{r2}, \eqref{r3}, i.e., they are functions of $g_{0}(x)$ and $h_{0}(y)$, respectively, where now the matrices $\mathbf{K}(x)=(k_{i,j}(x))$ (of order $N_{2}\times N_{2}$), $\mathbf{M}(y)=(m_{i,j}(y))$ (of order $N_{1}\times N_{1}$) are given by,
{\small{\begin{displaymath}
\begin{array}{rl}
k_{i,j}(x)=\left\{\begin{array}{ll}
-f_{4}(N_{1},i+1,x),&i=j-1,\\
-f_{3}(N_{1},i,x),&i=j,\\
f_{2}(N_{1},i-1,x),&i=j+1,\\
-f_{1}(N_{1},j,x),&i=j+2,\\
\end{array}\right.&\,m_{i,j}(y)=\left\{\begin{array}{ll}
-\tilde{f}_{4}(j+1,N_{2},y),&j=i-1,\\
-\tilde{f}_{3}(i,N_{2},y),&i=j,\\
\tilde{f}_{2}(j-1,N_{2},y),&j=i+1,\\
-\tilde{f}_{1}(j,N_{2},x),&j=i+2,\\
\end{array}\right.
\end{array}
\end{displaymath}}}
and the functional equation for the subregion $S_{d}$ has the form
\begin{displaymath}
[xy-\Psi(x,y)]g(x,y)=A(x,y)g_{0}(x)+B(x,y)h_{0}(y)+C(x,y), 
\end{displaymath}
where now,
{\small{\begin{displaymath}
\begin{array}{rl}
\Psi(x,y)=&\frac{y[y^{2}p_{-2,1}+yp_{-2,0}+p_{-2,-1}]+x[x^{2}p_{1,-2}+xp_{0,-2}+p_{-1,-2}]}{xy(1-p_{0,0}-xp_{1,0}-yp_{0,1}-xyp_{1,1}-y(p_{-1,0}+yp_{-1,1})-x(p_{0,-1}+xp_{1,-1})-p_{-1,-1}},\vspace{2mm}\\
A(x,y)=&\frac{y^{2}f_{1}(N_{1},N_{2}-1,x)e_{N_{2}-1}(x)-(yf_{3}(N_{1},N_{2},x)+f_{4}(N_{1},N_{2},x))e_{N_{2}}(x)-yf_{4}(N_{1},N_{2},x)e_{N_{2}+1}(x)}{xy(1-p_{0,0}-xp_{1,0}-yp_{0,1}-xyp_{1,1}-y(p_{-1,0}+yp_{-1,1})-x(p_{0,-1}+xp_{1,-1})-p_{-1,-1}},\vspace{2mm}\\
B(x,y)=&\frac{x^{2}\tilde{f}_{1}(N_{1}-1,N_{2},x)\tilde{e}_{N_{1}-1}(y)-(x\tilde{f}_{3}(N_{1},N_{2},y)+\tilde{f}_{4}(N_{1},N_{2},y))\tilde{e}_{N_{1}}(y)-y\tilde{f}_{4}(N_{1},N_{2},y)\tilde{e}_{N_{1}+1}(y)}{xy(1-p_{0,0}-xp_{1,0}-yp_{0,1}-xyp_{1,1}-y(p_{-1,0}+yp_{-1,1})-x(p_{0,-1}+xp_{1,-1})-p_{-1,-1}},\\
\end{array}
\end{displaymath}}}
and
{\small{\begin{displaymath}
\begin{array}{rl}
f_{1}(N_{1},n_{2},x)=&x^{3}p_{1,1}(N_{1},n_{2})+x^{2}p_{0,1}(N_{1},n_{2})+xp_{-1,1}(N_{1},n_{2})+p_{-2,0}(N_{1},n_{2},x),\vspace{2mm}\\
f_{2}(N_{1},n_{2},x)=&x^{2}[1-p_{0,0}(N_{1},n_{2})]-x^{3}p_{1,0}(N_{1},n_{2})-xp_{-1,0}(N_{1},n_{2})-p_{-2,0}(N_{1},n_{2},x),\vspace{2mm}\\
f_{3}(N_{1},n_{2},x)=&p_{-2,-1}(N_{1},n_{2})+xp_{-1,-1}(N_{1},n_{2})+x^{2}p_{0,-1}(N_{1},n_{2})+x^{3}p_{1,-1}(N_{1},n_{2}),\vspace{2mm}\\
f_{4}(N_{1},n_{2},x)=&x[p_{-1,-2}(N_{1},n_{2})+xp_{0,-2}(N_{1},n_{2})+x^{2}p_{1,-2}(N_{1},n_{2})+x^{2}p_{1,-1}(N_{1},n_{2})].
\end{array}
\end{displaymath}}}
Similar pattern follows for the polynomials $\tilde{f}_{k}(n_{1},N_{2},y)$, $k=1,2,3,4$. 

Note that although we have a similar behaviour as in the general model described in Section \ref{mod}, the form of $\Psi(x,y)$ as well as of the coefficients $A(x,y)$, $B(x,y)$, $C(x,y)$, have considerably changed, whereas we further have additional polynomial terms (i.e., the terms $f_{4}(N_{1},n_{2},x)$, $\tilde{f}_{4}(n_{1},N_{2},y)$).

However, the form of the resulting functional equation, and of $\Psi(x,y)$, ensures that the analysis performed in Section \ref{pre} can be used even in the current model. The overall conclusion is that the analysis in \cite{coh1} seems to be adapted to the case where we have \textit{bounded batch departures from either queue}. To our best knowledge, this result has never reported so far.
\subsection{A queue-based random access (RA) network}\label{appl1}
In the following, we focus on the modelling a queue-based RA network. In RA networks the main interest relies on stability and delay analysis. 

Delay analysis of (non queue-based) random access networks is quite challenging, and this is the reason of the quite limited number of analytical results in the related literature, e.g., \cite{dimpaptwc}. The vast majority in the related literature relies on the investigation of the stability conditions. We mentioned the works in \cite{Rao_TIT1988,LuoAE1999} that focused on interacting queues in ALOHA-type networks, and more recently, the work in \cite{PappasTWC2015} that dealt with cooperative wireless networks.

To retain the optimal throughput
performance of centralized policies in the distributed fashion, queue-based protocols are recently used \cite{gha,stol2}. Under such protocols, the transmission parameters of a node depend on the number of its backlogged packets; see in \cite{bouman1,castiel,borst2018} for progress regarding the stability conditions, and the heavy-traffic behaviour in such networks. However, it has been noticed that queue-based policies tend to result in long queues, and poor delay performance in order to ensure maximum stability \cite{bouman1,gha}; see also \cite{gu,shn2,stol2}. In these works, the actual queue lengths of the flows in each node's closest neighbourhood are used to determine the nodes' channel access
probabilities. However, they did not focus on the stationary behaviour.

Consider an ALOHA-type wireless network with two users communicating with a common destination node; see Figure \ref{fig1w}. Each user is equipped with an infinite capacity buffer for storing arriving and backlogged packets. The packet arrival processes are assumed to be independent from user to user and the channel is slotted in time. A time slot corresponds to the transmission time (i.e., the service time) of a packet. 

In such a shared access network, it is assumed a minimum exchanging information of one bit between the nodes, which allows them to be aware of the state of each other. The knowledge of the state of the network by a node provides the additional flexibility towards self-aware and dynamically adapted networks. To the best of our knowledge, this variation of RA has not been reported in the literature.

However, with the current technology it is rather difficult for a node to be completely aware of the state of a neighbour node. Thus, the transmission/packet generation parameters at each user node may depend \textbf{only} on their state (i.e., buffer occupancy); see \cite{stol2,bor1,bor2,gha}. However, due to the fact that user nodes have limited memory and capacity, the employed dynamic scheduling policy of user node $k$ is based only on a subset of its backlogged packets, say $N_{k}$, and not on the entire number of buffered packets.

Let $Q_{k,n}$, $k=1,2,$ be the number of stored packets at the buffer of user $k$, at the beginning of the $n$th slot. Then $\mathbf{Q}=\{(Q_{1,n},Q_{2,n}),n=0,1,...\}$ is a discrete time Markov chain with state space $S=\{\underline{n}=(n_{1},n_{2});n_{k}\geq0,k=1,2\}$.

At the beginning of each slot, each user adapts its transmission parameter based on the number of its backlogged packets. More precisely, if user node $k$, $k=1,2,$ stores $n_{k}$ packets at the beginning of a slot, it transmits a packet to the destination node with probability $a_{k}(n_{k})=1-r^{n_{k}}_{k}$ (with prob. $\bar{a}_{k}(n_{k})=r^{n_{k}}_{k}$ remains silent), where $0<r_{k}<1$. If both user nodes transmit at the same slot there is a collision, and both packets have to be retransmitted in a later slot. The term $r_{k}$, $k=1,2$, is called the service parameter of queue $k$. Thus, in such a scheme, the larger the queue length, the larger the
probability that the user will transmit a packet in a slot. Therefore, this \textit{probabilistic} service discipline is consistent with the \textit{weighted longest queue first discipline} \cite{leepatent}, since it prioritizes the most loaded user node. However, it is far more general since it does not oblige the least loaded user node to remain silent (i.e., it does not give absolute priority to the longest queue). Thus, it incorporates the benefits of RA scheduling, and at the same time it prioritizes the longest queue, which is recognized for its high performance in practice.

Note that if a user node transmits with probability $a_{k}(n_{k})=r_{k}^{n_{k}}$, $k=1,2,$ then, our service discipline is consistent with a RA scheme, which prioritizes the least loaded queue, i.e., a \textit{probabilistic shortest queue first policy}. 

Packet generation (i.e., arrivals) at user node $k$, $k=1,2$ are assumed to be i.i.d. random variables from slot to slot. Let $A_{k,n}(n_{k})$ be the number of packets that arrive at the buffer of user node $k$ at $(n,n+1]$ given that at the beginning of the $n$th slot the state of user node $k$ is $n_{k}$. We assume Bernoulli arrivals with the average number of arrivals being $\mathbb{E}(A_{k,n}(n_{k}))=\lambda_{k}(n_{k})<\infty$ packets per slot. It is natural to assume that $\lambda(n_{k})=\lambda_{k}^{n_{k}}$,  $0<\lambda_{k}<1$, $k=1,2$, i.e., the larger the queue length is, the smaller the packet generation probability in a slot. In such a situation, we have assumed that the packet generation rate at each user depends on its buffer occupancy level. 

We consider a \textit{limited} queue-based transmission  and packet generation (LQBT-G) protocol. Note that the LQBT-G policy arises quite natural in wireless systems. More precisely, it is well known that user nodes have \textit{limited memory, capacity and hardware resources}. Thus, to incorporate a dynamic transmission/packet generation policy based on the buffer occupancy level, it is normal to assume that the user's node adapts its transmission/packet generation parameters based on a subset of its backlogged packets, i.e., the node's $k$ parameters are configured by an upper level at its buffer occupancy, say $N_{k}$.

Let $N_{1}$, $N_{2}$ be the configuration levels at user's 1, 2 queue, respectively. Then, for $k=1,2,$
\begin{displaymath}
a_{k}(n_{k})=\begin{cases}
\begin{array}{rl}
1-r_{k}^{n_{k}},&if\,n_{k}<N_{k},\\
1-r_{k}^{N_{k}},&if\,n_{k}\geq N_{k},\\
\end{array}\end{cases}\,\,\lambda_{k}(n_{k})=\begin{cases}
\begin{array}{rl}
\lambda_{k}^{n_{k}},&if\,n_{k}<N_{k},\\
\lambda_{k}^{N_{k}},&if\,n_{k}\geq N_{k}.
\end{array}\end{cases}
\end{displaymath}
 The one step transition probabilities from $\underline{n}=(n_{1},n_{2})$ to $(n_{1}+i,n_{2},+j)$, say $p_{i,j}(\underline{n})$, where $\underline{n}\in S$, $i,j=-1,0,1$, are given by:
\begin{displaymath}
\begin{array}{rl}
p_{1,0}(\underline{n})=&(\bar{a}_{1}(n_{1})\bar{a}_{2}(n_{2})+a_{1}(n_{1})a_{2}(n_{2}))d_{1,0}(\underline{n})+\bar{a}_{1}(\underline{n})a_{2}(\underline{n})d_{1,1}(\underline{n}),\\
p_{0,1}(\underline{n})=&(\bar{a}_{1}(n_{1})\bar{a}_{2}(n_{2})+a_{1}(n_{1})a_{2}(n_{2}))d_{0,1}(\underline{n})+\bar{a}_{2}(\underline{n})a_{1}(\underline{n})d_{1,1}(\underline{n}),\\
p_{1,1}(\underline{n})=&(\bar{a}_{1}(n_{1})\bar{a}_{2}(n_{2})+a_{1}(n_{1})a_{2}(n_{2}))d_{1,1}(\underline{n}),\\
p_{-1,1}(\underline{n})=&a_{1}(n_{1})\bar{a}_{2}(n_{2})d_{0,1}(\underline{n}),\\
p_{1,-1}(\underline{n})=&a_{2}(n_{2})\bar{a}_{1}(n_{1})d_{1,0}(\underline{n}),\\
p_{-1,0}(\underline{n})=&a_{1}(n_{1})\bar{a}_{2}(n_{2})d_{0,0}(\underline{n}),\\
p_{0,-1}(\underline{n})=&a_{2}(n_{2})\bar{a}_{1}(n_{2})d_{0,0}(\underline{n}),\\
p_{0,0}(\underline{n})=&(\bar{a}_{1}(n_{1})\bar{a}_{2}(n_{2})+a_{1}(n_{1})a_{2}(n_{2}))d_{0,0}(\underline{n})+\bar{a}_{1}(n_{1})a_{2}(n_{2})d_{0,1}(\underline{n})\\&+\bar{a}_{2}(n_{2})a_{1}(n_{1})d_{1,0}(\underline{n}),
\end{array}
\end{displaymath}
where 
\begin{equation}
d_{i,j}(\underline{n})=\left\{\begin{array}{rl}
\lambda_{1}(n_{1})\bar{\lambda}_{2}(n_{2}),&i=1,j=0,\\
\lambda_{2}(n_{2})\bar{\lambda}_{1}(n_{1}),&i=0,j=1,\\
\lambda_{1}(n_{1})\lambda_{2}(n_{2}),&i=1,j=1,\\
\bar{\lambda}_{1}(n_{1})\bar{\lambda}_{2}(n_{2}),&i=0,j=0.
\end{array}\right.
\label{hjj}
\end{equation}
and $\bar{\lambda}_{k}(n_{k})=1-\lambda_{k}(n_{k})$, $k=1,2$. Note that by setting the configuring parameters $N_{1}$, $N_{2}$ we split the state space $S$ as given in \eqref{split}. For $(\underline{n})\in S_{d}$, $a_{k}(n_{k})=a_{k}(N_{k}):=a_{k}$, $\lambda_{k}(n_{k})=\lambda_{k}(N_{k})$, $k=1,2,$ and then
\begin{equation}
\begin{array}{rl}
R(x,y)=&xy-D(x,y)[xy+a_{1}\bar{a}_{2}y(1-x)+\bar{a}_{1}a_{2}x(1-y)],\\
D(x,y)=&(\bar{\lambda}_{1}+\lambda_{1}x)(\bar{\lambda}_{2}+\lambda_{2}y).
\end{array}
\end{equation}
The rest of the analysis follows the lines in Appendix  \ref{fayo}.

We now focus on the stability conditions using Theorem \ref{faystab}, and the notion of induced Markov chains (see Theorem \ref{ergth}). Remind that for $Q_{1,n}>N_{1}$ (resp. $Q_{2,n}>N_{2}$) the component $Q_{2,n}$ (resp. $Q_{1,n}$) evolves as a one-dimensional RW, and let its corresponding stationary distribution $\psi:=(\psi_{0},\psi_{1},...)$ (resp. $\varphi:=(\varphi_{0},\varphi_{1},...)$); see also Appendix \ref{apc}. Then, the mean drifts
\begin{displaymath}
\begin{array}{rl}
E_{x}^{(n_{2})}=&\lambda_{1}(N_{1},n_{2})-a_{1}(N_{1},n_{2})\bar{a}_{2}(N_{1},n_{2}),\,\forall n_{1}\geq N_{1},\\
E_{y}^{(n_{1})}=&\lambda_{2}(n_{1},N_{2})-a_{2}(n_{1},N_{2})\bar{a}_{1}(n_{1},N_{2}),\,\forall n_{2}\geq N_{2}.
\end{array}
\end{displaymath}
Since $a_{k}(\underline{n}):=a_{k}$, $\lambda_{k}(\underline{n})=\lambda_{k}$, $k=1,2,$ for $\underline{n}\in S_{d}$, we have that $E_{x}=\lambda_{1}-a_{1}\bar{a}_{2}$, $n_{2}\geq N_{2}$, $E_{y}=\lambda_{2}-a_{2}\bar{a}_{1}$, $n_{1}\geq N_{1}$.

The proof of Theorem \ref{ergth} is identical to those in \cite[Theorem 3.1, p. 178]{faysta}; see also \cite{malmen,za}. We also mention \cite[Chapters 6-8, Theorem 28.1, p. 322]{boro} for a thorough investigation of the stability conditions of general spatially homogeneous multidimensional Markov chains.
\begin{theorem}\label{ergth}
\begin{enumerate}
\item If $\lambda_{1}<a_{1}\bar{a}_{2}$, $\lambda_{2}<a_{2}\bar{a}_{1}$, $\mathbf{Q}$ is 
\begin{enumerate}
\item ergodic if
\begin{equation}
\begin{array}{rl}
\lambda_{1}(1-\sum_{k=0}^{N_{2}-1}\psi_{k})<&a_{1}\bar{a}_{2}(1-\sum_{k=0}^{N_{2}-1}\psi_{k})-\sum_{k=0}^{N_{2}-1}\gamma_{k}\psi_{k},\,and\vspace{2mm}\\
\lambda_{2}(1-\sum_{k=0}^{N_{1}-1}\varphi_{k})<&a_{2}\bar{a}_{1}(1-\sum_{k=0}^{N_{1}-1}\varphi_{k})-\sum_{k=0}^{N_{1}-1}\delta_{k}\varphi_{k}.
\end{array}
\end{equation}
\item transient if
\begin{equation}
\begin{array}{rl}
\lambda_{1}(1-\sum_{k=0}^{N_{2}-1}\psi_{k})>&a_{1}\bar{a}_{2}(1-\sum_{k=0}^{N_{2}-1}\psi_{k})-\sum_{k=0}^{N_{2}-1}\gamma_{k}\psi_{k},\,or\vspace{2mm}\\
\lambda_{2}(1-\sum_{k=0}^{N_{1}-1}\varphi_{k})>&a_{2}\bar{a}_{1}(1-\sum_{k=0}^{N_{1}-1}\varphi_{k})-\sum_{k=0}^{N_{1}-1}\delta_{k}\varphi_{k}.
\end{array}
\end{equation}
\end{enumerate}
\item If $\lambda_{1}\geq a_{1}\bar{a}_{2}$, $\lambda_{2}<a_{2}\bar{a}_{1}$, $\mathbf{Q}$ is
\begin{enumerate}
\item ergodic if
\begin{equation*}
\begin{array}{rl}
\lambda_{1}(1-\sum_{k=0}^{N_{2}-1}\psi_{k})<&a_{1}\bar{a}_{2}(1-\sum_{k=0}^{N_{2}-1}\psi_{k})-\sum_{k=0}^{N_{2}-1}\gamma_{k}\psi_{k}.
\end{array}
\end{equation*}
\item transient if 
\begin{equation*}
\begin{array}{rl}
\lambda_{1}(1-\sum_{k=0}^{N_{2}-1}\psi_{k})>&a_{1}\bar{a}_{2}(1-\sum_{k=0}^{N_{2}-1}\psi_{k})-\sum_{k=0}^{N_{2}-1}\gamma_{k}\psi_{k},
\end{array}
\end{equation*}
or when $\lambda_{1}> a_{1}\bar{a}_{2}$ and $\lambda_{1}(1-\sum_{k=0}^{N_{2}-1}\psi_{k})=a_{1}\bar{a}_{2}(1-\sum_{k=0}^{N_{2}-1}\psi_{k})-\sum_{k=0}^{N_{2}-1}\gamma_{k}\psi_{k}$.
\end{enumerate}

\item If $\lambda_{1}< a_{1}\bar{a}_{2}$, $\lambda_{2}\geq a_{2}\bar{a}_{1}$, $\mathbf{Q}$ is
\begin{enumerate}
\item  ergodic if 
\begin{equation*}
\begin{array}{rl}
\lambda_{2}(1-\sum_{k=0}^{N_{1}-1}\varphi_{k})<&a_{2}\bar{a}_{1}(1-\sum_{k=0}^{N_{1}-1}\varphi_{k})-\sum_{k=0}^{N_{1}-1}\delta_{k}\varphi_{k}.
\end{array}
\end{equation*}
\item transient if
\begin{equation*}
\begin{array}{rl}
\lambda_{2}(1-\sum_{k=0}^{N_{1}-1}\varphi_{k})>&a_{2}\bar{a}_{1}(1-\sum_{k=0}^{N_{1}-1}\varphi_{k})-\sum_{k=0}^{N_{1}-1}\delta_{k}\varphi_{k},
\end{array}
\end{equation*}
or when $\lambda_{2}>a_{2}\bar{a}_{1}$ and $\lambda_{2}(1-\sum_{k=0}^{N_{1}-1}\varphi_{k})=a_{2}\bar{a}_{1}(1-\sum_{k=0}^{N_{1}-1}\varphi_{k})-\sum_{k=0}^{N_{1}-1}\delta_{k}\varphi_{k}$.
\end{enumerate}
\item If $\lambda_{1}\geq a_{1}\bar{a}_{2}$, $\lambda_{2}\geq a_{2}\bar{a}_{1}$, $\mathbf{Q}$ is transient.
\end{enumerate}
\end{theorem}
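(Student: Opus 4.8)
The plan is to obtain Theorem~\ref{ergth} as a direct specialization of the general ergodicity criterion of Theorem~\ref{faystab} to the one-step transition probabilities of the LQBT-G protocol. All the abstract ingredients of Theorem~\ref{faystab} are already at hand: the excerpt identifies the homogeneous drifts $E_x=\lambda_1-a_1\bar a_2$, $E_y=\lambda_2-a_2\bar a_1$, and records the two induced one-dimensional walks whose stationary laws are $\psi$ and $\varphi$. Hence the task reduces to three computations: evaluating the level-dependent drifts $E_x^{(n_2)}$, $E_y^{(n_1)}$, establishing existence of $\psi$ and $\varphi$, and reducing the abstract inequalities $\rho_1<0$, $\rho_2<0$ (with $\rho_1=\sum_{n_2\ge0}\psi_{n_2}E_x^{(n_2)}$, $\rho_2=\sum_{n_1\ge0}\varphi_{n_1}E_y^{(n_1)}$) to the explicit conditions stated.

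First I would compute the drifts. Because the access probability $a_k(n_k)=1-r_k^{n_k}$ and the arrival probability $\lambda_k(n_k)=\lambda_k^{n_k}$ depend only on the own buffer content and are frozen at level $N_k$, for any $n_1\ge N_1$ the horizontal drift at vertical level $n_2$ is
\[
E_x^{(n_2)}=\lambda_1-a_1\bar a_2(n_2)=
\begin{cases}
\lambda_1-a_1 r_2^{\,n_2},& n_2<N_2,\\
\lambda_1-a_1\bar a_2=E_x,& n_2\ge N_2,
\end{cases}
\]
and symmetrically $E_y^{(n_1)}=\lambda_2-a_2 r_1^{\,n_1}$ for $n_1<N_1$, equal to $E_y$ for $n_1\ge N_1$. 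I would then read off the coefficients of the theorem as the sub-threshold drifts, $\gamma_k:=E_x^{(k)}=\lambda_1-a_1 r_2^{\,k}$ and $\delta_k:=E_y^{(k)}=\lambda_2-a_2 r_1^{\,k}$ for $k<N_2$ (resp.\ $k<N_1$).

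Next I would treat the induced chains and evaluate $\rho_1,\rho_2$. When $Q_{1,n}>N_1$ the first coordinate no longer feels the vertical boundary, so $Q_{2,n}$ is a walk on $\mathbb{Z}_+$ that is spatially homogeneous above level $N_2$ with tail drift $E_y$; by elementary one-dimensional recurrence theory it is positive recurrent and possesses the stationary law $\psi$ exactly when $E_y<0$, and symmetrically $\varphi$ exists when $E_x<0$. This aligns with the case split of the theorem, since each branch invokes only the distribution that is guaranteed to exist. Splitting the sum for $\rho_1$ at level $N_2$ and using $\sum_{n_2\ge0}\psi_{n_2}=1$ gives
\[
\rho_1=\lambda_1-a_1\bar a_2\Big(1-\sum_{k=0}^{N_2-1}\psi_k\Big)-a_1\sum_{k=0}^{N_2-1}\psi_k r_2^{\,k},
\]
and a short rearrangement with $\gamma_k=\lambda_1-a_1 r_2^{\,k}$ shows that $\rho_1<0$ is precisely $\lambda_1(1-\sum_{k=0}^{N_2-1}\psi_k)<a_1\bar a_2(1-\sum_{k=0}^{N_2-1}\psi_k)-\sum_{k=0}^{N_2-1}\gamma_k\psi_k$; the computation for $\rho_2$ is identical with the two queues exchanged. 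Feeding these equivalences, together with the signs of $E_x$ and $E_y$, into the four branches of Theorem~\ref{faystab} yields the four cases of Theorem~\ref{ergth}, the strict-transience statements (e.g.\ $E_x>0$ with $\rho_1=0$) being inherited verbatim.

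The genuinely delicate point, and the one I would lean on the cited results for, is the justification that the averaged drift quantities $\rho_1,\rho_2$ constitute the correct ergodicity test: when the fast coordinate equilibrates to $\psi$ (resp.\ $\varphi$) the slow coordinate should inherit the averaged drift $\rho_1$ (resp.\ $\rho_2$). This is exactly the content of \cite[Theorem 3.1]{faysta} (see also \cite[Chapters 6--8]{boro}), and the verification that the LQBT-G chain meets its hypotheses—irreducibility, bounded jumps, and homogeneity above the thresholds $N_1,N_2$—is routine, so the only genuinely new labour is the drift bookkeeping carried out above.
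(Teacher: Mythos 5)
Your proposal is correct and follows essentially the same route as the paper: the paper's proof likewise consists of identifying the induced one-dimensional chains with stationary laws $\psi$, $\varphi$, computing the level-dependent drifts $E_{x}^{(n_{2})}=\lambda_{1}(N_{1},n_{2})-a_{1}(N_{1},n_{2})\bar{a}_{2}(N_{1},n_{2})$ and $E_{y}^{(n_{1})}$, and then invoking Theorem~\ref{faystab} (\cite[Theorem 3.1]{faysta}) verbatim. In fact your write-up is more explicit than the paper's, which omits the identification $\gamma_{k}=E_{x}^{(k)}$, $\delta_{k}=E_{y}^{(k)}$ and the algebra showing that $\rho_{1}<0$, $\rho_{2}<0$ are equivalent to the displayed inequalities; your bookkeeping of these steps is consistent with the stated conditions.
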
\vspace{-0.2in}
\section{Numerical example}\label{num}
For the numerical illustration, we focus on a queue-based RA network of two users with collisions, which is described by a PH-NNRWQP with no transitions to the South-West, and follow Appendix \ref{fayo} to handle the solution of functional equation \eqref{fun}. 
\paragraph{Queueing analysis:} The main steps are briefly described in the following:
\begin{enumerate}
\item For ease of computations we focus on a very simple example where $N_{1}=N_{2}=2$. We firstly determine the boundary functions $g_{0}(x)$ and $h_{0}(y)$ in terms of the unknown probabilities $\pi(N_{1},n_{2})$, $n_{2}=0,1,\ldots,N_{2}-1$, $\pi(n_{1},N_{2})$, $n_{1}=0,1,\ldots,N_{1}-1$, and $\pi(N_{1},N_{2})$. 

Both Riemann-Hilbert problems (for $g_{0}(x)$ and for $h_{0}(y)$) had for varying values of system parameters strictly negative indices. In most of the cases the value of the index equals $-2$. It seems that this holds (i.e., $\chi=-N$) for ergodic two-dimensional partially homogeneous random walks with $N_{1}=N_{2}=N$, but this conjecture requires further justification. Moreover, as shown in subsection \ref{rhp}, in case of a negative index, for the existence of a unique solution of the Riemann-Hilbert problems, additional conditions must be satisfied; see \eqref{index1}. In our case, these conditions produced additional equations on the unknown probabilities, but they were direct consequences of the equilibrium equations that are associated with $S_{a}$.
\item After obtaining $g_{0}(x)$ and $h_{0}(y)$, we can use them to produce $N_{1}+N_{2}$ equations by appropriate differentiation; see at the end of subsection \ref{rhp}:
\begin{displaymath}
\begin{array}{rl}
n_{2}!\pi(N_{1},n_{2})=\frac{d^{n_{2}}}{dx^{n_{2}}}[e_{n_{2}}(x)g_{0}(x)+t_{n_{2}}(x)]|_{x=0},\,n_{2}=1,...,N_{2},\vspace{2mm}\\
n_{1}!\pi(n_{1},N_{2})=\frac{d^{n_{1}}}{dy^{n_{1}}}[\tilde{e}_{n_{1}}(y)h_{0}(y)+\tilde{t}_{n_{1}}(y)|_{y=0},\,n_{1}=1,...,N_{1}.
\end{array}
\end{displaymath}
Note that each coefficient in these equations requires the evaluation of complex integrals of type (\ref{sool1}). In order to numerically evaluate them, we have firstly to construct the conformal mappings. Note that in most of the cases we are not able to obtain them explicitly. However, an efficient numerical approach was developed in \cite{coh1}, Sec. IV.1.1; see below for more details on the spesific example. Alternatively, since contours are close to ellipses, we can use the nearly circular approximation, \cite{neh}. The function $U(x)$ that appears in the boundary condition (\ref{bou}), involves determinants of matrices whose elements are polynomials.

Along with these equations, we use the normalizing condition (\ref{nor}).
\item Using these $N_{1}+N_{2}+1$ equations along with the $N_{1}\times N_{2}$ equations that refer to the states in region $S_{a}$, we build an $(N_{1}+1)\times(N_{2}+1)$ system of equations. Its solution provides $\pi(N_{1},n_{2})$, $n_{2}=0,1,\ldots,N_{2}-1$, $\pi(n_{1},N_{2})$, $n_{1}=0,1,\ldots,N_{1}-1$, and $\pi(N_{1},N_{2})$. Therefore, $g_{0}(x)$ and for $h_{0}(y)$ is known, and so it is $g(x,y)$ as well as $g_{n_{2}}(x)$, $n_{2}=1,\ldots,N_{2}$, $h_{n_{1}}(y)$, $n_{1}=1,\ldots,N_{1}$.


\end{enumerate} 

As already stated, we focus on the simple case $N_{1}=N_{2}=2$, and let for $||n||=n_{1}+n_{2}$, 
$a_{k}(\underline{n}):=a(\underline{n})=a\frac{n_{k}}{||n||}$, $\lambda_{k}(\underline{n}):=\lambda(\underline{n})=\lambda 2^{-||n||}$, with $a(\underline{n})=a$, and $\lambda(\underline{n})=\lambda$, for $\underline{n}\in S_{d}$. Note that the transmission probability $a_{k}(\underline{n})$ of user $k$ is an increasing function of the number of connecting users up to the level $N_{k}$ (i.e., $a_{k}(\underline{n})=a\frac{n_{k}}{||n||}$), and for $n_{k}>N_{k}$ remains fixed and equal to $a$. Thus, the nodes dynamically increase the transmission probability as demand increases, but due to their limited capabilities, there is an upper limit equal to $a$. Such a setting may also dictated by energy consumption constraints, since nodes are usually battery operated. Thus, the less buffer occupancy, the smaller the transmission probability, which results in less energy consumption. 

The calculation of basic metrics, such as $E(Q_{1})$ (resp. $E(Q_{2})$), requires the evaluation of integrals (\ref{zx}), (\ref{sool1}), and the values of $\gamma(1)$, $\gamma^{\prime}(1)$. Note that $\gamma(1)$ is the unique solution of $\gamma_{0}(\eta)=1$ in $[0,1]$, and it is determined numerically using (\ref{zx}) and the Newton-Raphson method. Moreover, $\gamma^{\prime}(1)=(\frac{d}{dz}\gamma_{0}(z)|_{z=\eta})^{-1}$, which also can be determined numerically using the trapezium rule. 

We now outline how these integrals can be computed: Firstly, we rewrite the integral (\ref{sool1}) by substituting $t=e^{i\phi}$ (similar work must be done also for the expression of $h_{0}(y)$) as well as for the derivatives of $g_{0}(x)$, $h_{0}(y)$:
\begin{equation*}
\begin{array}{rl}
g_{0}(x)=\prod_{i=1}^{k}(x-\xi_{i})^{-1}e^{i\sigma(\gamma(x))}(\gamma(x))^{\chi}[iK+\frac{1}{2\pi}\int_{0}^{2\pi}e^{\omega_{1}(e^{i\phi})}\delta(e^{i\phi})\frac{e^{i\phi}+\gamma(x)}{e^{i\phi}-\gamma(x)}d\phi].
\end{array}
\end{equation*}

Then, we split the interval $[0,2\pi]$ into $J$ parts of length $2\pi/J$. For the $J$ points given by their angles $\left\{\phi_{0},...,\phi_{J-1}\right\}$, we solve the Theodorsen's integral equation (\ref{zx}),
to obtain iteratively the points $\left\{\tilde{\psi}(\phi_{1}),...,\tilde{\psi}(\phi_{J-1})\right\}$ from:
\begin{displaymath}
\begin{array}{rl}
\tilde{\psi}_{0}(\phi_{k})=\phi_{k},&\,
\tilde{\psi}_{n+1}(\phi_{k})=\phi_{k}-\frac{1}{2\pi}\int_{0}^{2\pi}\log\left\{\frac{\delta(\tilde{\psi}_{n}(\omega))}{\cos(\tilde{\psi}_{n}(\omega))}\right\}\cot[\frac{1}{2}(\omega-\phi_{k})]d\omega,
\end{array}
\end{displaymath}
where $\delta(\tilde{\psi}_{n}(\omega))$ is determined by $\delta(\tilde{\psi}_{n}(\omega))-\cos(\psi_{n}(\omega))\sqrt{m(\tilde{\delta}(\psi_{n}(\omega)))}=0$, using the Newton-Raphson method with stopping criterion: \begin{displaymath}
\begin{array}{c}
\max_{k\in\left\{0,1,...,K-1\right\}}\left|\tilde{\psi}_{n+1}(\phi_{k})-\tilde{\psi}_{n}(\phi_{k})\right|<10^{-6}.
\end{array}
\end{displaymath}
Having obtained $\tilde{\psi}(\phi_{k})$ numerically, the values of $\gamma_{0}(z)$, $\left|z\right|\leq 1$ are given by
\begin{displaymath}
\begin{array}{c}
\gamma_{0}(e^{i\phi_{k}})=e^{i\tilde{\psi}(\phi_{k})}\frac{\delta(\tilde{\psi}(\phi_{k}))}{\cos(\tilde{\psi}(\phi_{k}))}=\delta(\tilde{\psi}(\phi_{k}))[1+i \tan(\tilde{\psi}(\phi_{k}))],\,k=0,1,...,J-1.
\end{array}
\end{displaymath}
We proceed with the determination of $\gamma(1)$, $\gamma^{\prime}(1)$. Clearly, $\gamma(1)=\eta$ means $\gamma_{0}(\eta)=1$. Thus, $\gamma(1)$ is the unique solution of $\gamma_{0}(\eta)=1$ in $[0,1]$, and can be obtained using (\ref{zx}) and the Newton-Raphson method. Furthermore, 
\begin{displaymath}
\begin{array}{l}
\gamma^{\prime}(1)=(\frac{d}{dz}\gamma_{0}(z)|_{z=\eta})^{-1}=\{\frac{1}{\gamma(1)}+\frac{1}{2\pi}\int_{0}^{2\pi}\frac{\log\{\rho(\tilde{\psi}(\omega))\}2e^{i\omega}}{(e^{i\omega}-\gamma(1))^{2}}d\omega\}^{-1},
\end{array}
\end{displaymath}
is numerically determined using the trapezium rule. For our numerical example we divided the interval $[0,2\pi)$ into $J = 1000$ equal parts and apply
the trapezium numerical integration approach described above. Then, we derive the required elements to obtain $E(Q_{1})$ in \eqref{ex1} (similarly for the $E(Q_{2})$).

In particular, in Figure \ref{fig2}, the total expected number of buffered packets $E(Q_{1}+Q_{2})$ is presented as a function of $\lambda$, $a$. Definitely, by increasing $a$, the delay in queue can be handled as long as $\lambda$ remains in small values. However, we can see there is no significant benefit if at the same time we increased $a$. This is because by increasing $a$, we also increase the possibility of a collision, which results in unsuccessful transmissions. However, by increasing $\lambda$, we observe the increase on the total expected number of buffered packets, as expected.

\begin{figure}
\centering
\includegraphics[scale=0.65]{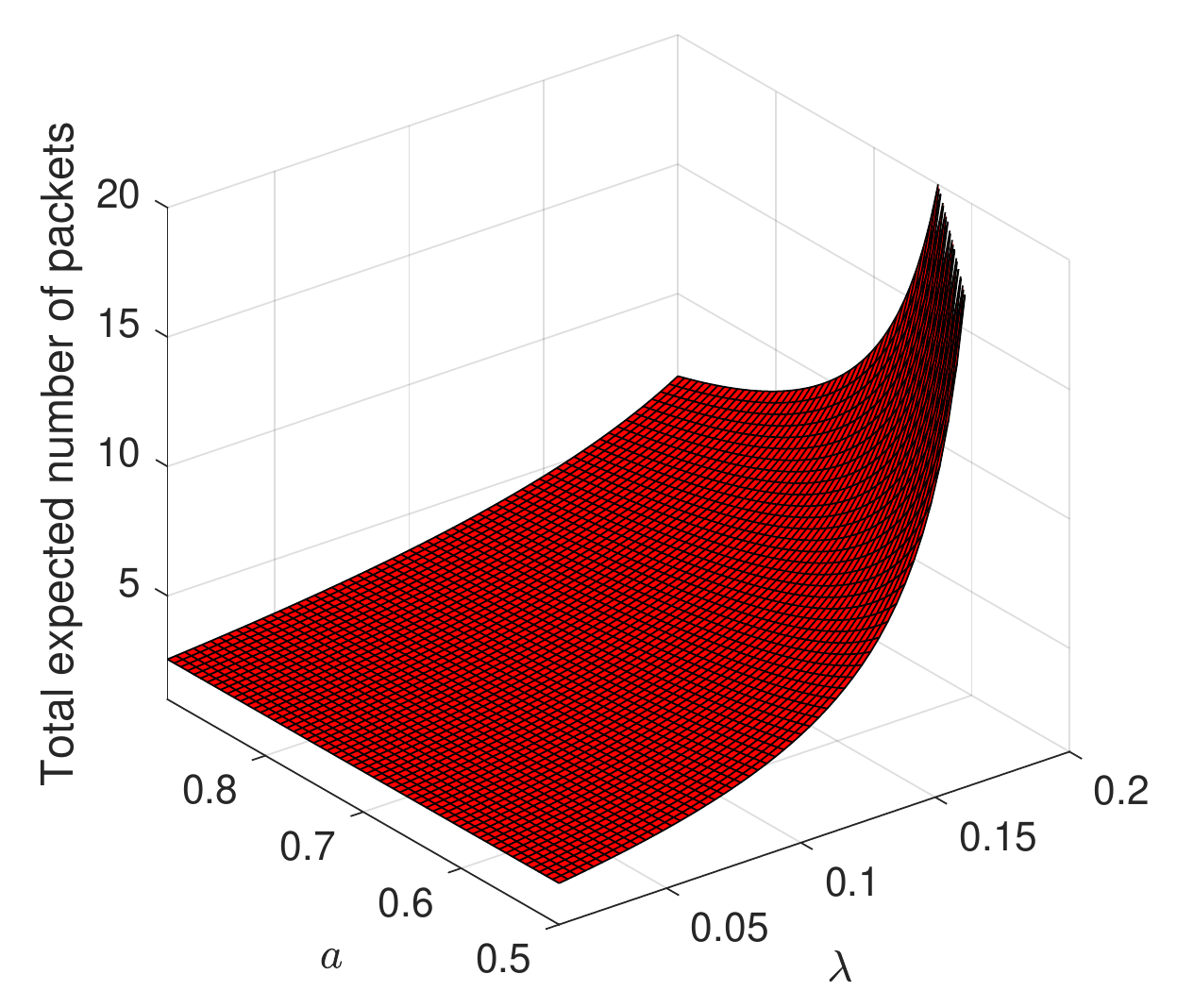}
\caption{Effect of $\lambda$, $a$ on the  total expected number of buffered packets.}\label{fig2}
\end{figure}

Moreover, in Figure \ref{fig3} we observe how the probability of an empty system varies as we vary the values of $N_{1}$, $N_{2}$ and $\lambda$. The overall conclusion is that when we increase the values of $N_{1}$, $N_{2}$, the transmission probabilities increase, and thus we expect an increase of the probability of an empty system.
\begin{figure}
\centering
\includegraphics[scale=0.5]{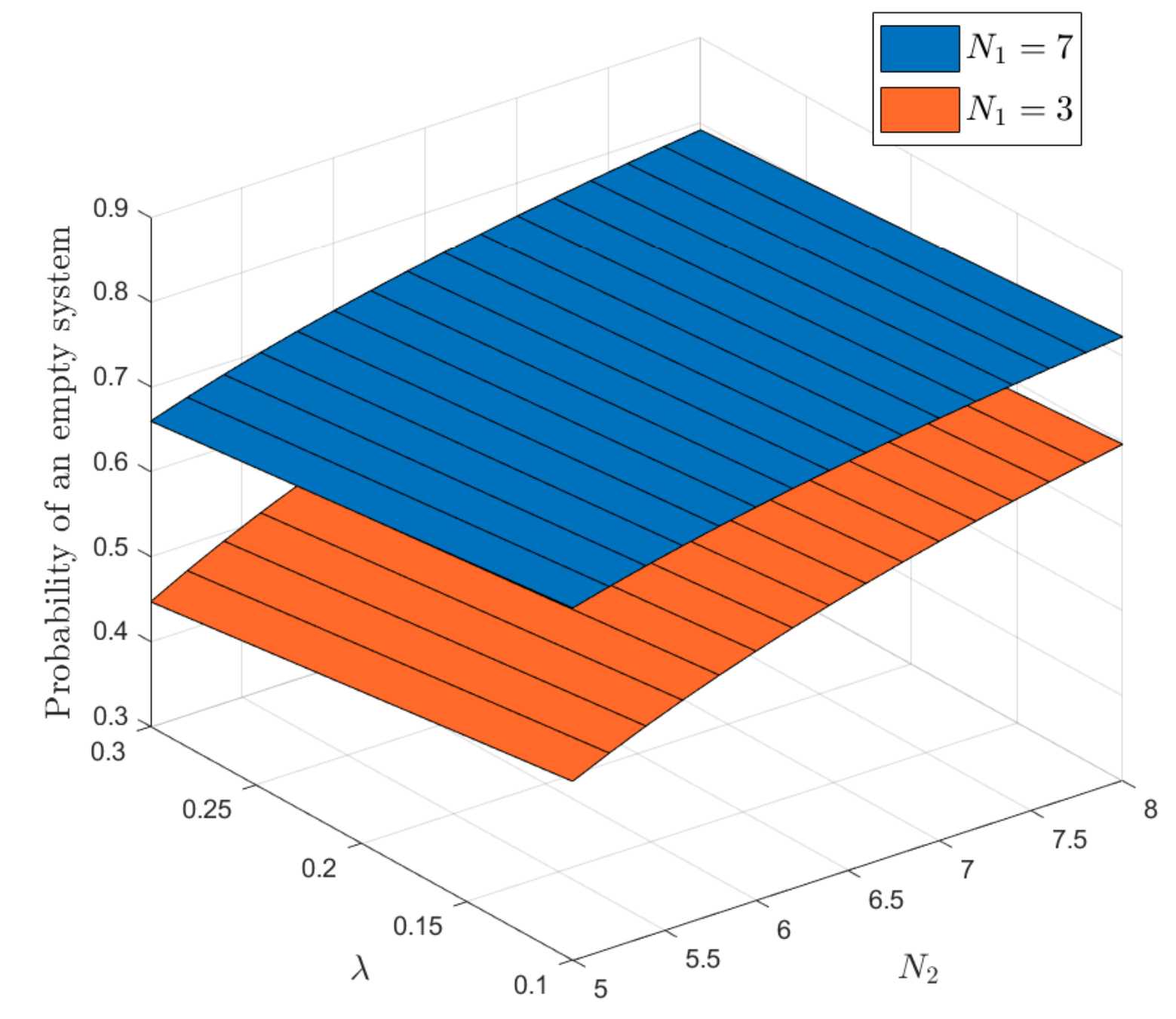}
\caption{The probability of an empty system for $a=0.6$.}\label{fig3}
\end{figure}
\paragraph{Stability condition:} Although our main concern in this work relies on the investigation of the stationary behaviour, it would be interesting to observe how the stability region is affected as we change the values of the system  parameters. Set $N_{1}=N_{2}=2$, $\lambda_{2}(\underline{n})=0.5^{n_{2}}$, $n_{1}=0,1$, $n_{2}=0,1,2$, and $a_{1}(\underline{n})=0.8\frac{n_{1}}{n_{1}+n_{2}}$, $a_{2}(\underline{n})=0.6\frac{n_{2}}{n_{1}+n_{2}}$. As a stability region, we define the set of arrival rates $(\lambda_{1}(2,2),\lambda_{2}(2,2))$ for which the queues are stable. Here, by setting the transmission probabilities at each queue as above, we are interesting on how the packet generation rate at queue 1 when $Q_{1}\leq N_{1}$, i.e., in the region $S_{a}$, affects the set of arrival rates $(\lambda_{1}(2,2),\lambda_{2}(2,2))$. Let also
\begin{equation*}
\begin{array}{rl}
h_{1}:=&\lambda_{1}(2,2)(1-\sum_{k=0}^{1}\psi_{k})-a_{1}(2,2)\bar{a}_{2}(2,2)(1-\sum_{k=0}^{1}\psi_{k})+\sum_{k=0}^{1}\gamma_{k}\psi_{k},\\
h_{2}:=&\lambda_{2}(2,2)(1-\sum_{k=0}^{1}\phi_{k})-a_{2}(2,2)\bar{a}_{1}(2,2)(1-\sum_{k=0}^{1}\phi_{k})+\sum_{k=0}^{1}\delta_{k}\phi_{k}.
\end{array}
\end{equation*}
Recall that $a_{1}(\underline{n})=a_{1}(2,2)$, $a_{2}(\underline{n})=a_{2}(2,2)$ for $\underline{n}\in S_{d}$.

In Figure \ref{stabi} we observe how the stability region is affected by varying $\lambda_{1}(\underline{n})$. In particular, when $\lambda_{1}(\underline{n})=0.2^{n_{1}}$, for $n_{1}=0,1,2$, $n_{2}=0,1$, the stability region is given by the domain $ABCO$, where $O:=(0,0)$. Note that the smaller value of $\lambda_{1}(\underline{n})$ with respect to $\lambda_{2}(\underline{n})$ allows $\lambda_{1}(2,2)$ to take relatively large values with respect to $\lambda_{2}(2,2)$. When $\lambda_{1}(\underline{n})=0.8^{n_{1}}$, the stability region becomes the domain $DEFO$, which seems to be more fair for $\lambda_{1}(2,2)$, $\lambda_{2}(2,2)$.

Another interesting observation is that contrary to the standard ALOHA-type network with collisions, the stability region of our queue-based version is a convex set as shown in Figure \ref{stabi}. We mention here that convexity of the stability region of ALOHA-type RA networks is accomplished only for the case where we allow simultaneous successful transmissions (i.e, the multi-packet reception policy \cite{dimpaptwc}). This is a quite interesting result, since non-convexity of stability region in ALOHA type networks is due to the presence of collisions. It seems that by employing the queue-based access scheme helps to overcome this problem. Clearly, such a conjecture for any queue-based RA network of interacting queues requires further justification.
\begin{figure}
\centering
\includegraphics[scale=0.5]{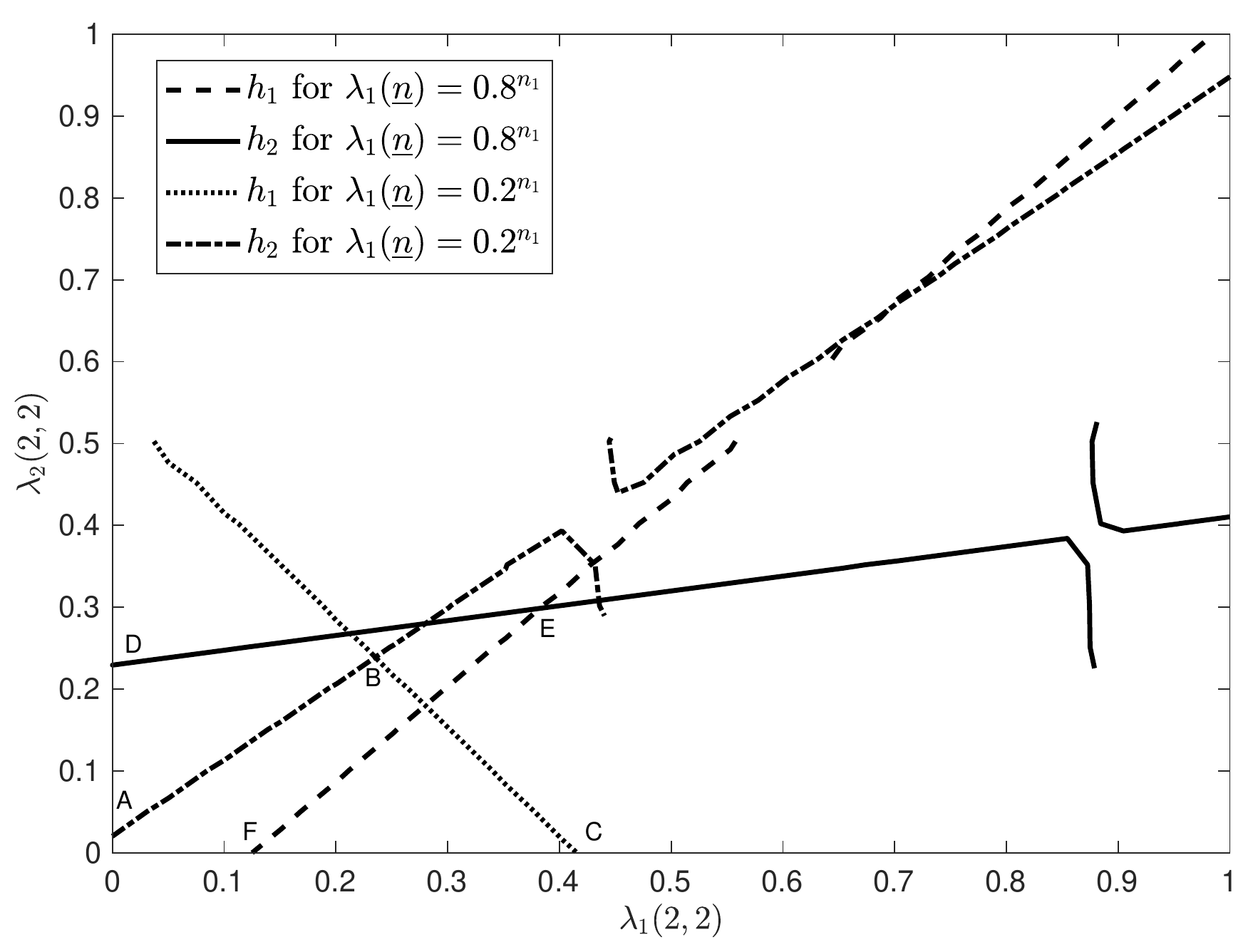}
\caption{The effect of $\lambda_{1}(\underline{n})$ on the stability region.}\label{stabi}
\end{figure}\vspace{-0.2in}
\section{Conclusion}
In this work, we provided an analytical approach to analyse the stationary behaviour of a partially homogeneous nearest-neighbour random walk in the quarter plane. We show that its stationary distribution is investigated by: $1)$ solving a finite system of linear equations, $2)$ solving two matrix functional equations in terms of $g_{0}(x)$, $h_{0}(y)$, and $3)$ solving a functional equation using the theory of Riemann (-Hilbert) boundary value problem.

This class of random walks can be used to model discrete-time two-dimensional queues with limited state-dependency. More precisely, it is flexible enough to model a wide range of general arrival/service scheduling policies. Furthermore, it provides a general framework to analyse other novel queueing models, such as the LDGPS system with/without impatience, and a two-node queue-based RA network. By theoretical point of view, the approach can be adapted to cases of bounded batch departures from each queue, a result that has never reported in the related literature so far. In a future work, we plan to further investigate the numerical implementation of the approach as well as to compare it with other well known numerical oriented approaches. A challenging future task is to extend this approach to the case of Markov-modulated PH-RWQP.  
\appendix
\section{Overview of various scheduling policies based on our framework}\label{discu}
In the following, we provide an overview of several other scheduling disciplines that are based on our modelling framework, and can be studied by our approach. 
\begin{enumerate}
\item Queue based traffic control in telecommunication networks \cite{choikimlee} (e.g., Asynchronous Transfer Mode networks (ATM), or Broadband Integrated Services Digital Network (B-ISDN)). The authors used a two-dimensional queueing system with two \textit{finite capacity} queues. Under the employed scheduling scheme, the next customer to be served is of type $k$ with probability $a_{k}(i,j)$, $k = 1, 2$, when there are $i$ type-1 and $j$ type-2 jobs, with $(i,j)\in\{0,1,\ldots,K_{1}\}\times\{0,1,\ldots,K_{2}\}$ and $a_{1}(i,j)+a_{2}(i,j)=1$ for all $i$ and $j$.
\item Head of the line (HOL) priority scheduling policy, e.g., \cite{takagi}. In such a scenario, we have e.g., $\beta_{1}(n_{1},n_{2})=1$, for $n_{1}>0$. i.e., type 1 jobs have HOL priority over type 2 jobs. Our modelling framework is even more general and incorporates a \textit{mixed} HOL policy, under which priority is assigned to a class based on the inhomogeneous regions, e.g., $\beta_{1}(n_{1},n_{2})=1$, for $(n_{1},n_{2})\in S_{a}$ and $\beta_{2}(n_{1},n_{2})=1$, for $(n_{1},n_{2})\in S_{d}$. 
\item Shortest queue first policy (SQF), e.g., \cite{guillemin1}: The next job to be served is chosen by the least loaded queue. Such a scheduling discipline was recently used to improve the quality of Internet access on high-speed communication links \cite{carofaglioetal}. In such a scenario:
\begin{displaymath}
\beta_{1}(n_{1},n_{2})=\left\{\begin{array}{ll}
1,&n_{1}\leq n_{2},\\
0,&n_{1}>n_{2},
\end{array}\right.\,\,
\beta_{2}(n_{1},n_{2})=\left\{\begin{array}{ll}
1,&n_{2}\leq n_{1},\\
0,&n_{2}>n_{1}.
\end{array}\right.
\end{displaymath}
\item Longest queue first policy (LQF), e.g., \cite{cohenlong}: Also known as the greedy maximal
scheduling policy, it is recognized for its high performance in practice and chooses the next customer to be served by the longest queue. In such a scenario:
\begin{displaymath}
\beta_{1}(n_{1},n_{2})=\left\{\begin{array}{ll}
1,&n_{1}> n_{2},\\
0,&n_{1}\leq n_{2},
\end{array}\right.\,\,
\beta_{2}(n_{1},n_{2})=\left\{\begin{array}{ll}
1,&n_{2}> n_{1},\\
0,&n_{2}\leq n_{1}.
\end{array}\right.
\end{displaymath}
\item Bernoulli scheduling policy \cite{dslee,feng}: Upon a service completion, a type $k$ customer is chosen with probability $p_{k}$. In such a scenario, $\beta_{k}(n_{1},n_{2})=p_{k}$, $k=1,2$ with $p_{1}+p_{2}=1$.
\item Queue-length-threshold (QLT) policy \cite{knessl,boxmadown}: Such a policy is applied for traffic control in telecommunication systems, such as ATM networks. We set a constant $L$ in queue 1, and the next job to be served is chosen according to the following rule:
\begin{displaymath}
\beta_{1}(n_{1},n_{2})=\left\{\begin{array}{ll}
1,&n_{1}> L,\\
0,&n_{1}\leq L,
\end{array}\right.\,\,
\beta_{2}(n_{1},n_{2})=\left\{\begin{array}{ll}
1,&n_{1}\leq L,\\
0,&n_{1}>L.
\end{array}\right.
\end{displaymath}
For $L=0$, we give HOL priority to queue 1. For $L=\infty$, then we have the 1-limited discipline, e.g., \cite{coh1}.
\item QLT scheduling policy with Bernoulli schedule \cite{choilee}: Refers to a generalization of the QLT policy described above. In such a scenario:
\begin{displaymath}
\beta_{1}(n_{1},n_{2})=\left\{\begin{array}{ll}
1,&n_{1}> L,\\
p,&n_{1}\leq L,
\end{array}\right.\,\,
\beta_{2}(n_{1},n_{2})=\left\{\begin{array}{ll}
1,&n_{1}\leq L,\\
q:=1-p,&n_{1}>L.
\end{array}\right.
\end{displaymath}
\item Discrete-time GPS (DGPS) \cite{walr}: If both queues are non-empty, the server chooses a job from queue 1 (resp. queue 2) with probability $\beta$ (resp. $1-\beta$). If one of the queues is empty, a job from the other queue is chosen with probability 1:
\begin{displaymath}
\beta_{1}(n_{1},n_{2})=\left\{\begin{array}{ll}
\beta,&n_{1},n_{2}> 0,\\
1,&n_{1}>0,n_{2}=0,
\end{array}\right.\,\,
\beta_{2}(n_{1},n_{2})=\left\{\begin{array}{ll}
1-\beta,&n_{1},n_{2}>0,\\
1,&n_{1}=0,n_{2}>0.
\end{array}\right.
\end{displaymath}
\item Modified DGPS: If both queues are non-empty, the server chooses a job from queue 1 (resp. queue 2) with probability $\beta_{1}^{n_{2}}$ (resp. $\beta_{2}^{n_{1}}$), with $\beta_{1}^{n_{2}}+\beta_{2}^{n_{1}}=1$, i.e., the probability of choosing the next customer of either type is a decreasing function of the number of backlogged jobs of the other type. In such a scenario:
\begin{displaymath}
\beta_{1}(n_{1},n_{2})=\left\{\begin{array}{ll}
\beta_{1}^{n_{2}},&n_{1},n_{2}> 0,\\
1,&n_{1}>0,n_{2}=0,
\end{array}\right.\,\,
\beta_{2}(n_{1},n_{2})=\left\{\begin{array}{ll}
\beta_{2}^{n_{2}},&n_{1},n_{2}>0,\\
1,&n_{1}=0,n_{2}>0.
\end{array}\right.
\end{displaymath}
\end{enumerate}

Our modelling framework describes the \textit{limited} versions of the all these service disciplines based on the splitting rule \eqref{split}, and further generalize them, by allowing:
\begin{itemize}
\item an even more general form of service scheduling function $\beta_{k}(n_{1},n_{2})$.
\item the arrival and service rates to depend also on the scheme described in \eqref{split}, \eqref{trans}. 
\item combination of scheduling policies according to the splitting rule \eqref{split}. This means that in each subregion $S_{j}$, $j=a,b,c,d,$ we are able to consider different scheduling policies. Such a flexibility allows to provide a quite general service scheduling scheme.
\end{itemize} 
%
\section{An alternative approach}\label{fayo}
In the following, we focus on the case $\Psi(0,0)=0$, i.e., $p_{-1,-1}=0$, and on the solution of (\ref{fun}), which is now reduced in terms of a solution of a Riemann-Hilbert boundary value problem, by following the method in \cite{fay1}. The basic steps are briefly summarized below:
\begin{enumerate}
\item[Step 1] We provide a thorough investigation of the kernel equation $R(x,y)=0$, by identifying its roots as well as their properties (see subsection \ref{kerp}).
\item[Step 2]The functional equation (\ref{fun}), is then used to show that $g_{0}(x)$, $h_{0}(y)$ satisfy certain boundary value problems of Riemann-Hilbert type \cite{fay}, with boundary conditions on \textit{closed contours}. Information on these curves are obtained from Step 1 (see subsection \ref{kerp}, and Lemmas \ref{lemm1}, \ref{lemp1}, \ref{SQ}). Clearly, the functions $g_{0}(x)$, $h_{0}(y)$ are analytic inside the unit disc, but they might have poles in the region bounded by the unit disc and these closed curves. Thus, we have to analytically continue (Appendix \ref{defi}) $g_{0}(x)$, $h_{0}(y)$ in the whole interiors of the contours; see \cite[Chapter 3]{fay}. Similar observations are made also for the functions $g_{n_{2}}(\cdot)$, $h_{n_{1}}(\cdot)$. Equation \eqref{bou} is the boundary condition that satisfies the function $g_{0}(.)$ on such a closed contour (see subsection \ref{rhp}).
\item[Step 3] Then, we transform (through \textit{conformal mappings} \cite{ga}; see Appendix \ref{defi}) that problem from the closed contour defined in Step 1, into boundary value problem of Riemann-Hilbert type on the unit disc; see (\ref{zx}). Such a conversion is motivated by the fact that the latter problems are more usual and by far more treated in the literature \cite{coh1}.
\item[Step 4] We solve the new problem by providing an integral expression of the unknown boundary function $g_{0}(.)$; see (\ref{sool1}) in subsection \ref{rhp}. Similarly, we obtain the other unknown function $h_{0}(y)$. Then, substituting in \eqref{fun} we finally obtain $g(x,y)$ in terms of the unknown probabilities: $\pi(N_{1},n_{2})$, $n_{2} =0, 1,...,N_{2} -1$, $\pi(n_{1},N_{2})$, $n_{1} =0, 1,...,N_{1} -1$ and $\pi(N_{1},N_{2})$. A way to obtain these unknowns is given at end of subsection \ref{rhp}.
\end{enumerate}
\subsection{Kernel analysis}\label{kerp}
The kernel $R(x,y)$ is a quadratic polynomial with respect to $x$, $y$. Indeed,
\begin{displaymath}
R(x,y)=\widehat{a}(x)y^2+\widehat{b}(x)y+\widehat{c}(x)=a(y)x^{2}+b(y)x+c(y),
\end{displaymath}
where for $p_{i,j}:=p_{i,j}(N_{1},N_{2})=p_{i,j}(n_{1},n_{2})$ for $(n_{1},n_{2})\in S_{d}$,
\begin{displaymath}
\begin{array}{lcl}
\widehat{a}(x)=-(xp_{0,1}+x^{2}p_{1,1}+p_{-1,1}),&&a(y)=-(yp_{1,0}+y^{2}p_{1,1}+p_{1,-1}),\\
\widehat{b}(x)=x(1-p_{0,0})-p_{-1,0}-x^{2}p_{1,0},&&b(y)=y(1-p_{0,0})-p_{0,-1}-y^{2}p_{0,1},\\
\widehat{c}(x)=-(x^{2}p_{1,-1}+xp_{0,-1}),&&c(y)=-(y^{2}p_{-1,1}+yp_{-1,0}).
\end{array}
\end{displaymath}

In the following, we provide some technical lemmas that are necessary for the formulation of a Riemann-Hilbert boundary value problem.
\begin{lemma}\label{lemm1}
For $|y|=1$, $y\neq1$, the kernel equation $R(x,y)=0$ has exactly one root $x=X_{0}(y)$ such that $|X_{0}(y)|<1$. For $E_{x}=p_{1,0}+p_{1,1}+p_{1,-1}-p_{-1,1}-p_{-1,0}<0$ (i.e., the one-step mean jumps (the drift) from any interior point of $S_{d}$ with respect to $x-$axis), $X_{0}(1)=1$. Similarly, we can prove that $R(x,y)=0$ has exactly one root $y=Y_{0}(x)$, such that $|Y_{0}(x)|\leq1$, for $|x|=1$.
\end{lemma}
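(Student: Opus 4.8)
The plan is to read $R(x,y)$ as the quadratic $a(y)x^{2}+b(y)x+c(y)$ in $x$ displayed just above the lemma, and, for each fixed $y$ on the unit circle, to count how many of its roots fall in $|x|<1$ by an application of Rouch\'e's theorem \cite{marku} on the circle $|x|=1$. Writing the kernel in the equivalent form $R(x,y)=xy-\Psi(x,y)$ with $\Psi(x,y)=\sum_{i,j}p_{i,j}x^{i+1}y^{j+1}$, I would compare $R(x,y)$ with the single term $xy$.

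First I would establish the governing inequality $|\Psi(x,y)|<|xy|=1$ on $|x|=1$ for every fixed $y$ with $|y|=1$, $y\neq 1$. By the triangle inequality, $|\Psi(x,y)|\leq \sum_{i,j}p_{i,j}|x|^{i+1}|y|^{j+1}=\sum_{i,j}p_{i,j}=1$ on $|x|=|y|=1$; equality can hold only if all the nonzero terms $p_{i,j}x^{i+1}y^{j+1}$ share a common argument, i.e.\ $x^{i-i'}y^{j-j'}=1$ for every pair $(i,j),(i',j')$ in the support of the jump law. Since the walk on $S_{d}$ is irreducible and (strongly) aperiodic, these difference vectors generate $\mathbb{Z}^{2}$, so the only solution on the torus $|x|=|y|=1$ is $x=y=1$; hence for $y\neq 1$ the inequality is strict for all $x$ on the unit circle. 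Consequently $|R(x,y)|\geq |xy|-|\Psi(x,y)|>0$ on $|x|=1$, so the kernel has no root on the circle, and Rouch\'e's theorem applied to $xy-\Psi$ and $xy$ shows that $R(\cdot,y)$ has exactly as many zeros in $|x|<1$ as $xy$, namely the single zero $x=0$. This produces the unique root $X_{0}(y)$ with $|X_{0}(y)|<1$ (and, when $a(y)\neq0$, a companion root outside the disc).

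Next I would treat the boundary value $y=1$ together with the drift condition. Here $R(1,1)=1-\Psi(1,1)=1-\sum_{i,j}p_{i,j}=0$, so $x=1$ is always a root of $R(\cdot,1)$; the product of the two roots equals $c(1)/a(1)=(p_{-1,0}+p_{-1,1})/(p_{1,0}+p_{1,1}+p_{1,-1})$, whence the second root equals this ratio. The hypothesis $E_{x}<0$ is exactly $p_{1,0}+p_{1,1}+p_{1,-1}<p_{-1,0}+p_{-1,1}$, so this second root exceeds $1$ and lies outside the closed disc. Letting $y\to 1$ along the circle, the interior root varies continuously and its limit is a root of $R(\cdot,1)$ of modulus $\leq 1$; since the two roots of $R(\cdot,1)$ are $1$ and a value $>1$, the limit must be $1$, i.e.\ $X_{0}(1)=1$. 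The statement for $y=Y_{0}(x)$ follows verbatim after interchanging the roles of $x$ and $y$ (using the $y$-quadratic $\widehat{a}(x)y^{2}+\widehat{b}(x)y+\widehat{c}(x)$ and the drift $E_{y}$), which yields $|Y_{0}(x)|\leq1$ on $|x|=1$.

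The main obstacle I expect is the justification of the strict inequality at the critical directions on the circle — in particular ruling out equality $|\Psi(x,y)|=1$ at points such as $y=-1$ — which is precisely where aperiodicity/irreducibility of the walk is genuinely used, since the triangle-inequality bound alone only gives $\leq 1$; the limiting direction $y=1$, where equality does occur and the root sits exactly on $|x|=1$, must be excised and handled separately through the drift as above. A secondary technical point is the continuity (indeed analyticity) of the root $X_{0}(y)$ as $y\to 1$, so that the limit identification $X_{0}(1)=1$ is legitimate.
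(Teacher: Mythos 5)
Your proposal is correct and follows essentially the same route as the paper's proof: Rouch\'e's theorem on $|x|=1$ comparing $xy$ with $\Psi(x,y)$ for fixed $|y|=1$, $y\neq 1$, and then, at $y=1$, identification of the two roots of the quadratic $R(x,1)$ (the paper factorizes it as $(1-x)\bigl[x(p_{1,0}+p_{1,1}+p_{1,-1})-(p_{-1,1}+p_{-1,0})\bigr]$, you use Vieta's relation $c(1)/a(1)$ --- the same computation) together with $E_{x}<0$ to conclude $X_{0}(1)=1$. If anything, your treatment is more careful than the paper's: the paper simply asserts $|\Psi(x,y)|<1=|xy|$, whereas you correctly observe that the triangle inequality alone only gives $\leq 1$ and that ruling out equality at points such as $y=-1$ genuinely requires the support differences of the jump law on $S_{d}$ to generate $\mathbb{Z}^{2}$ (an aperiodicity-type condition the paper leaves implicit but also needs).
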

\begin{proof}
For $|y|=1$, $y\neq1$, the kernel equation $R(x,y)=0$, or equivalently $xy=\Psi(x,y)$ has exactly one root $x=X_{0}(y)$ such that $|X_{0}(y)|<1$. This is immediately proven by realizing that $|\Psi(x,y)|<1=|xy|$ and applying Rouch\'e's theorem. For $y=1$, $R(x,1)=0$ implies $(1-x)[x(p_{1,0}+p_{1,1}+p_{1,-1})-(p_{-1,1}+p_{-1,0})]$. Thus, in case $E_{x}:=p_{1,0}+p_{1,1}+p_{1,-1}-p_{-1,1}-p_{-1,0}<0$, $X_{0}(1)=1$. Similarly, we can prove that $R(x,y)=0$ has exactly one root $y=Y_{0}(x)$, $|Y_{0}(x)|\leq1$, for $|x|=1$. For an alternative derivation see \cite[Lemma 5.3.1]{fay}.\hfill$\square$
\end{proof}


The next step is to identify the location of the branch points of the two-valued function $Y_{\pm}(x)=\frac{-\widehat{b}(x)\pm\sqrt{D_{Y}(x)}}{2\widehat{a}(x)}$ (resp. $X_{\pm}(y)=\frac{-b(y)\pm\sqrt{D_{X}(y)}}{2a(y)}$) defined by $R(x,Y(x))=0$ (resp. $R(X(y),y)=0$), where $D_{Y}(x)=\widehat{b}(x)^{2}-4\widehat{a}(x)\widehat{c}(x)$ (resp. $D_{X}(y)=b(y)^{2}-4a(y)c(y)$). The branch points of $Y_{\pm}(x)$ (resp. $X_{\pm}(y)$) are the roots of $D_{Y}(x)=0$ (resp. $D_{X}(y)=0$).
\begin{lemma}\label{lemp1}
The algebraic function $Y(x)$, defined by $R(x,Y(x)) = 0$, has four real branch points, say $x_{1},x_{2},x_{3},x_{4}$, such that $x_{1},x_{2}$ are positive an lie inside the unit disc, and $x_{3},x_{4}$ lie outside the unit disc. In particular, for $x_{3}$, $x_{4}$, the following classification holds:
\begin{enumerate}
\item If $p_{1,0}>2\sqrt{p_{1,1}p_{1,-1}}$, then $x_{3}$, $x_{4}$ are positive,
\item If $p_{1,0}=2\sqrt{p_{1,1}p_{1,-1}}$, then one is positive and the other is infinite,
\item If $p_{1,0}<2\sqrt{p_{1,1}p_{1,-1}}$, then one is positive and the other is negative.
\end{enumerate}

Moreover, $D_{Y}(x)<0$, $x\in(x_{1},x_{2})\cup(x_{3},x_{4})$. 
Similar arguments hold for $X(y)$.
\end{lemma}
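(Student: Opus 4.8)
The plan is to study the discriminant $D_Y(x)=\widehat{b}(x)^2-4\widehat{a}(x)\widehat{c}(x)$ directly as a real polynomial, exploiting the sign structure of $\widehat{a},\widehat{b},\widehat{c}$ together with Lemma \ref{lemm1}. First I would record that $D_Y$ has degree at most four, with leading coefficient $p_{1,0}^2-4p_{1,1}p_{1,-1}$ (the $x^4$ contributions of $\widehat{b}(x)^2$ and $4\widehat{a}(x)\widehat{c}(x)$ being $p_{1,0}^2x^4$ and $4p_{1,1}p_{1,-1}x^4$). This single coefficient drives the trichotomy for $x_3,x_4$: letting $x\to\infty$ in $R(x,y)=0$ shows the two branches tend to the roots of $p_{1,1}y^2+p_{1,0}y+p_{1,-1}=0$, whose own discriminant is again $p_{1,0}^2-4p_{1,1}p_{1,-1}$. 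These limiting roots have sum $-p_{1,0}/p_{1,1}<0$ and product $p_{1,-1}/p_{1,1}>0$, hence are real and both negative (case 1), coincide and one branch point escapes to infinity (case 2), or become complex (case 3).

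The core of the proof is the observation that, for \emph{real} $x$, $D_Y(x)<0$ holds exactly when the two roots $Y_\pm(x)$ of $R(x,Y)=0$ are complex conjugate. The decisive structural fact is that for every $x>0$ both $\widehat{a}(x)=-(x^2p_{1,1}+xp_{0,1}+p_{-1,1})$ and $\widehat{c}(x)=-(x^2p_{1,-1}+xp_{0,-1})$ are negative, so the product of the branches $Y_+(x)Y_-(x)=\widehat{c}(x)/\widehat{a}(x)$ is strictly positive on all of $x>0$; thus, whenever the branches are real, they carry the same sign. I would then evaluate the branches at the endpoints: at $x=0$ one has $R(0,y)=-y(p_{-1,1}y+p_{-1,0})$, giving branches $0$ and $-p_{-1,0}/p_{-1,1}<0$; at $x=1$, since $R(1,1)=0$ and $E_y<0$, the branches are $1$ and $\widehat{c}(1)/\widehat{a}(1)>1$, both positive; and for $x\to\infty$ in case 1 both are negative. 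Because the common sign of the real branches flips between both-negative near $x=0^+$ and both-positive near $x=1^-$, while the product never vanishes on $(0,1)$, continuity forbids the branches from staying real throughout: a continuous real branch negative near $0$ and positive near $1$ would have to vanish, forcing the product to vanish, a contradiction. Hence there is a complex excursion, bounded by two zeros $0<x_1\le x_2<1$ of $D_Y$; the identical flip between $x=1$ and $x=+\infty$ produces a second pair $1<x_3<x_4$ in case 1.

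To certify reality and exact localisation of all four branch points I would close the count using the degree bound and two further sign evaluations. At the real points $x=\pm1$ of the unit circle, Lemma \ref{lemm1} supplies exactly one root inside the disc; since $R(\pm1,y)$ has real coefficients, complex-conjugate roots would share one modulus and could not separate the disc, so the roots there are real and distinct and $D_Y(\pm1)>0$. Combined with $D_Y(0)=p_{-1,0}^2>0$ (assuming $p_{-1,0}>0$; otherwise $x_1=0$), this excludes any branch point in $(-1,0]$. In case 1 the leading coefficient is positive and the two roots in $(0,1)$ plus the two in $(1,\infty)$ already exhaust the degree-four count, so all roots are real and $x_3,x_4$ are positive and outside the disc. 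In case 3 the leading coefficient is negative: $D_Y(1)>0$ with $D_Y(+\infty)=-\infty$ yields one positive root $x_3>1$, while $D_Y(-1)>0$ with $D_Y(-\infty)=-\infty$ yields one negative root $x_4<-1$, again four real roots in total. Case 2 is the degenerate limit in which the $x^4$ coefficient vanishes, $D_Y$ drops to degree three with negative leading coefficient, and $x_4$ escapes to infinity. In every case $D_Y<0$ precisely on the interval(s) lying between the branch points of a conjugate pair, which is the final assertion, and the statement for $X(y)$ follows verbatim under the interchange $x\leftrightarrow y$, $p_{i,j}\leftrightarrow p_{j,i}$.

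The step I expect to be the main obstacle is the reality and correct localisation of all four branch points, and in particular the exclusion of complex branch points and of spurious roots in $(-1,0)$. The resolution rests on pairing the positivity of $\widehat{a}(x)\widehat{c}(x)$ on $x>0$ (which pins real branches to a common sign) with the ``one root inside the disc'' property of Lemma \ref{lemm1} at the two real circle points $x=\pm1$, and then invoking the degree bound to guarantee that the roots delivered by the intermediate-value and continuity arguments are \emph{all} of them.
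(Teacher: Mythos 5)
Your proposal is correct in its essentials, but it cannot be ``the same approach as the paper,'' because the paper contains no proof of Lemma \ref{lemp1} at all: its proof environment consists of the citation ``See Lemma 2.3.8, pp.~27--28, \cite{fay1}.'' Judged against that benchmark, what you have produced is a self-contained, elementary proof built from four ingredients: (i) the observation that $\widehat{a}(x)<0$ and $\widehat{c}(x)<0$ for $x>0$, so the product $Y_{+}(x)Y_{-}(x)=\widehat{c}(x)/\widehat{a}(x)$ is strictly positive there and real branches must share a sign; (ii) the endpoint evaluations at $x=0$ (branches $0$ and $-p_{-1,0}/p_{-1,1}$, using $p_{-1,-1}=0$), at $x=1$ (branches $1$ and $\widehat{c}(1)/\widehat{a}(1)>1$, using $E_{y}<0$), and at $x=\infty$ (roots of $p_{1,1}y^{2}+p_{1,0}y+p_{1,-1}$, whose discriminant $p_{1,0}^{2}-4p_{1,1}p_{1,-1}$ is exactly the leading coefficient of $D_{Y}$ and drives the trichotomy); (iii) the continuity/intermediate-value contradiction forcing a complex excursion, hence at least two odd-order zeros of $D_{Y}$, in $(0,1)$ and (in case 1) in $(1,\infty)$; and (iv) the conjugate-root argument at $x=\pm 1$ via Lemma \ref{lemm1} giving $D_{Y}(\pm 1)>0$, combined with the bound $\deg D_{Y}\le 4$ to conclude that the zeros located are all of them and are simple. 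This is the same general strategy as the original proof in \cite{fay1} (sign analysis of the discriminant at well-chosen points), but your ``common sign of real branches'' device is a clean, independent way to force $D_{Y}<0$ inside $(0,1)$ and beyond $x=1$, and it makes the paper's appeal to the literature dispensable.

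Two points deserve tightening. First, the sentence asserting that $D_{Y}(0)>0$ and $D_{Y}(-1)>0$ ``excludes any branch point in $(-1,0]$'' is not valid as it stands: two positive sample values cannot rule out an even number of zeros between them. The exclusion genuinely comes from the root count you perform immediately afterwards (in each of the three cases the zeros already located in $(0,1)$, $(1,\infty)$ and, in case 3, $(-\infty,-1)$ exhaust the degree of $D_{Y}$), so that claim should be presented as a corollary of the count, not of the sample values. Second, your argument, like the lemma itself, silently assumes strict positivity of the quantities it divides by or takes leading coefficients from ($p_{-1,1}$, $p_{-1,0}$, $p_{1,1}$, $p_{1,-1}$); you flag the case $p_{-1,0}=0$, but the other degeneracies (e.g.\ $\widehat{a}(0)=0$ when $p_{-1,1}=0$, which sends one branch to infinity at $x=0$) should either be excluded by hypothesis or treated separately. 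Finally, in case 3 the closing assertion ``$D_{Y}<0$ on $(x_{3},x_{4})$'' must be read, as you do read it, as the interval through infinity $(x_{3},+\infty)\cup(-\infty,x_{4})$, which matches the convention of \cite{fay1}.
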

\begin{proof}
See Lemma 2.3.8, pp. 27-28, \cite{fay1}.\hfill$\square$
\end{proof}

To ensure the continuity of the two-valued function $Y(x)$ (resp. $X(y)$), we consider the following cut planes: $\doubletilde{C}_{x}=\mathbb{C}_{x}-([x_{1},x_{2}]\cup[x_{3},x_{4}])$, $\doubletilde{C}_{y}=\mathbb{C}_{y}-([y_{1},y_{2}]\cup[y_{3},y_{4}])$, where $\mathbb{C}_{x}$, $\mathbb{C}_{y}$ are the complex planes of $x$, $y$, respectively. For $x\in\doubletilde{C}_{x}$ (resp. $x\in\doubletilde{C}_{x}$), denote by $Y_{0}(x)$ (resp. $X_{0}(y)$) the zero of $R(x,Y(x))$ (resp. $R(X(y),y)$) with the smallest modulus, and by $Y_{1}(x)$ (resp. $X_{1}(y)$) the other one.
Define also the image contours, $\mathcal{L}=Y_{0}([\overrightarrow{\underleftarrow{x_{1},x_{2}}}])=\bar{Y}_{1}([\overleftarrow{\underrightarrow{x_{1},x_{2}}}])$, $\mathcal{M}=X_{0}([\overrightarrow{\underleftarrow{y_{1},y_{2}}}])=\bar{X}([\overleftarrow{\underrightarrow{y_{1},y_{2}}}])$, where $[\overrightarrow{\underleftarrow{u,v}}]$ stands for the contour (see Appendix \ref{defi}) traversed from $u$ to $v$ along the upper edge of the slit $[u,v]$ and then back to $u$ along the lower edge of the slit. 
The following lemma provides an exact characterization for the contours $\mathcal{L}$, $\mathcal{M}$, respectively.
\begin{lemma}\label{SQ}\begin{enumerate}\item For $y\in[y_{1},y_{2}]$, the algebraic function $X(y)$ lies on a closed contour $\mathcal{M}$, which is symmetric with respect to the real line and written as a function of $Re(x)$, i.e.,
\begin{displaymath}
\begin{array}{l}
|x|^{2}=m(Re(x)),\,|x|^{2}\leq\frac{c(y_{2})}{a(y_{2})}.
\end{array}
\end{displaymath}
Set $\beta_{0}:=\sqrt{\frac{c(y_{2})}{a(y_{2})}}$, $\beta_{1}=-\sqrt{\frac{c(y_{1})}{a(y_{1})}}$ the extreme right and left points of $\mathcal{M}$, respectively.
\item For $x\in[x_{1},x_{2}]$, the algebraic function $Y(x)$ lies on a closed contour $\mathcal{L}$, which is symmetric with respect to the real line and written as a function of $Re(y)$ as,
\begin{displaymath}
\begin{array}{l}
|y|^{2}=v(Re(y)),\,|y|^{2}\leq\frac{\widehat{c}(x_{2})}{\widehat{a}(x_{2})}.
\end{array}
\end{displaymath}
Set $\eta_{0}:=\sqrt{\frac{\widehat{c}(x_{2})}{\widehat{a}(x_{2})}}$, $\eta_{1}=-\sqrt{\frac{\widehat{c}(x_{1})}{\widehat{a}(x_{1})}}$ the extreme right and left points of $\mathcal{L}$, respectively.
\end{enumerate}
\end{lemma}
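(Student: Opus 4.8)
The plan is to prove assertion~2 (for $\mathcal{L}$); assertion~1 (for $\mathcal{M}$) follows verbatim after interchanging the roles of $x$ and $y$, i.e.\ working with $a(y),b(y),c(y)$ and the cut $[y_{1},y_{2}]$ in place of $\widehat{a}(x),\widehat{b}(x),\widehat{c}(x)$ and $[x_{1},x_{2}]$. The argument reproduces, in our notation, the reasoning of \cite[Lemma 2.3.8 and Sect.~5.3]{fay1}. First I would fix $x\in[x_{1},x_{2}]$. By Lemma~\ref{lemp1} we have $D_{Y}(x)\le 0$ on this interval, with equality only at the endpoints, so the two roots of $R(x,Y)=\widehat{a}(x)Y^{2}+\widehat{b}(x)Y+\widehat{c}(x)=0$ are
\[
Y_{\pm}(x)=\frac{-\widehat{b}(x)\pm i\sqrt{-D_{Y}(x)}}{2\widehat{a}(x)}.
\]
Since $\widehat{a},\widehat{b},\widehat{c}$ are real on the real axis, $Y_{+}(x)$ and $Y_{-}(x)$ are complex conjugates; hence the image of $[x_{1},x_{2}]$ under $Y(\cdot)$ is symmetric with respect to the real line, which already gives the claimed symmetry of $\mathcal{L}$.

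Next I would extract the two scalar relations that parametrise $\mathcal{L}$ through Vieta's formulas. Writing $y=Y_{\pm}(x)$ and using $Y_{+}(x)Y_{-}(x)=\widehat{c}(x)/\widehat{a}(x)$ together with $Y_{+}(x)+Y_{-}(x)=-\widehat{b}(x)/\widehat{a}(x)$, and recalling that the two roots are conjugate, we obtain
\[
|y|^{2}=\frac{\widehat{c}(x)}{\widehat{a}(x)},\qquad 2\,\mathrm{Re}(y)=-\frac{\widehat{b}(x)}{\widehat{a}(x)},\qquad x\in[x_{1},x_{2}].
\]
To turn this into the stated relation $|y|^{2}=v(\mathrm{Re}(y))$ I would eliminate the parameter $x$. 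This requires showing that $x\mapsto \mathrm{Re}(y)=-\widehat{b}(x)/(2\widehat{a}(x))$ is strictly monotone on $[x_{1},x_{2}]$; under the drift hypothesis $E_{x}<0$ this map is a bijection of $[x_{1},x_{2}]$ onto $[\eta_{1},\eta_{0}]$, so it can be inverted and $v$ is defined by $v(\mathrm{Re}(y)):=\widehat{c}(x)/\widehat{a}(x)$ with $x$ the corresponding preimage; $v$ is real-analytic on the interior of the slit.

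Finally I would identify the endpoints and the bound. At $x=x_{1}$ and $x=x_{2}$ we have $D_{Y}=0$, so $Y_{+}=Y_{-}$ is real and equal to $-\widehat{b}(x_{j})/(2\widehat{a}(x_{j}))=\pm\sqrt{\widehat{c}(x_{j})/\widehat{a}(x_{j})}$; these are exactly the extreme real points $\eta_{1}=-\sqrt{\widehat{c}(x_{1})/\widehat{a}(x_{1})}$ and $\eta_{0}=\sqrt{\widehat{c}(x_{2})/\widehat{a}(x_{2})}$. As $x$ runs along the upper edge of the slit from $x_{1}$ to $x_{2}$, $Y_{+}(x)$ traces one arc joining $\eta_{1}$ to $\eta_{0}$ in the upper half plane, and along the lower edge $Y_{-}(x)$ traces the conjugate arc; the two arcs meet only at $\eta_{0},\eta_{1}$, so $\mathcal{L}$ is a closed curve. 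The inequality $|y|^{2}\le \widehat{c}(x_{2})/\widehat{a}(x_{2})$ then amounts to the statement that $q(x):=\widehat{c}(x)/\widehat{a}(x)$ attains its maximum over $[x_{1},x_{2}]$ at the right endpoint $x_{2}$; this I would verify by analysing the sign on $[x_{1},x_{2}]$ of the numerator of $q'(x)$, namely the quadratic $(p_{1,-1}p_{0,1}-p_{0,-1}p_{1,1})x^{2}+2p_{1,-1}p_{-1,1}x+p_{0,-1}p_{-1,1}$.

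The main obstacle is the last two points taken together: proving that $\mathrm{Re}(y)$ is genuinely monotone in $x$ (so that $v$ is single-valued and $\mathcal{L}$ is simple rather than self-intersecting), and pinning down where $q(x)=|y|^{2}$ is maximised so as to obtain the sharp constant $\widehat{c}(x_{2})/\widehat{a}(x_{2})$. Both reduce to a careful sign analysis of rational functions of $x$ whose coefficients are the transition probabilities, exploiting the location of the branch points from Lemma~\ref{lemp1}; this is exactly where the ergodicity conditions $E_{x}<0$, $E_{y}<0$ enter, and it is the step I expect to be the most delicate.
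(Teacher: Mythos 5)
Your proposal is correct and follows essentially the same route as the paper's proof: conjugacy of the two roots over the slit $(x_{1},x_{2})$ gives the symmetry, Vieta's formulas give $|y|^{2}=\widehat{c}(x)/\widehat{a}(x)$ and $2\,Re(y)=-\widehat{b}(x)/\widehat{a}(x)$, and the bound $|y|^{2}\leq\widehat{c}(x_{2})/\widehat{a}(x_{2})$ comes from the sign of exactly the same quadratic numerator of $\frac{d}{dx}\bigl(\widehat{c}(x)/\widehat{a}(x)\bigr)$ that the paper computes. The only (cosmetic) difference is the order of elimination of the parameter $x$: you invert $x\mapsto Re(y)$ and then read off $|y|^{2}$, while the paper inverts $x\mapsto|y|^{2}$ (monotone by that same derivative computation, yielding the explicit $\tilde{x}(y)$) and then solves the $Re(y)$-relation for $|y|^{2}$ -- a final step which implicitly rests on the same monotonicity you flag as the delicate point.
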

\begin{proof}
We will prove the part related to $\mathcal{L}$. Similarly, we can also prove part 2. For $x\in(x_{1},x_{2})$, $D_{x}(x)=\widehat{b}^{2}(x)-4\widehat{a}(x)\widehat{c}(x)$ is negative, so $X_{0}(y)$ and $X_{1}(y)$ are complex conjugates. Thus, $|Y(x)|^{2}=\frac{\widehat{c}(x)}{\widehat{a}(x)}=k(x)$, and together with,
\begin{equation*}
\begin{array}{c}
\frac{d}{dx}k(x)=\frac{x^{2}(p_{0,1}p_{1,-1}-p_{1,1}p_{0,-1})+2p_{1,-1}p_{-1,1}x+p_{-1,1}p_{0,-1}}{\widehat{a}(x)^{2}},
\end{array}
\end{equation*}
being a non-negative function for $x\in(0,\infty)$ implies that $k(x)\leq k(x_{2})$. We can further solve $|Y(x)|^2 = \widehat{c}(x)/\widehat{a}(x)$ as a function of $x$, and denote the solution that lies within $[x_1,x_2]$ by
\begin{equation*}
\begin{array}{l}
\tilde{x}(y)=\frac{p_{0,-1}-p_{0,1}|y|^{2}-\sqrt{(p_{0,1}|y|^{2}-p_{0,-1})^{2}-4p_{-1,1}|y|^{2}(p_{1,1}|y|^{2}-p_{1,-1})}}{2(p_{1,1}|y|^{2}-p_{1,-1})}.
\end{array}
\label{cxz}
\end{equation*}
So $\tilde{x}(y)$ is in fact the one-valued inverse function of $Y(x)$. For each $y\in\mathcal{L}$ it also follows that
\begin{equation}
\begin{array}{l}
Re(Y(x))=\frac{-\widehat{b}(\tilde{x}(y))}{2\widehat{a}(\tilde{x}(y))}.
\end{array}
\label{rd1}
\end{equation} 
Solving (\ref{rd1}) as a function of $|Y(x)|^{2}$ gives an expression for $|Y(x)|^{2}$ in terms of $Re(y)$. \hfill$\square$
\end{proof}\vspace{-0.25in}
\subsection{Formulation and solution of a Riemann-Hilbert boundary value problem}\label{rhp}
We are ready to proceed with the solution of \eqref{fun}. Note that $g(x, y)$ is well-defined for $(x, y) = (X_{0}(y), y)$ with $|y| = 1$, since $(i)$ $g(x,y)$ is well-defined for $|x|\leq 1$, $|y|\leq 1$, $(ii)$ $X_{0}(y)$ is
continuous for $|y| =1$ (note that from Lemma \ref{lemm1} it is known that $X_{0}(y)$ is analytic in $\mathbb{C}_{y} - [y_{1}, y_{2}]$ and that $0 < y_{1} < y_{2} < 1$ so that $X_{0}(y)$ is continuous for $|y| = 1$), $(iii)$ $|X_{0}(y)| \leq 1$ for $|y|=1$. Thus, the left hand side of \eqref{fun} must vanish for all pairs $(X_{0}(y), y)$:
\begin{equation}
\begin{array}{c}
A(X_{0}(y),y)g_{0}(X_{0}(y))+B(X_{0}(y),y)h_{0}(y)+C(X_{0}(y),y)=0,\,|y|=1.
\end{array}
\label{con}
\end{equation}
Let $y\in \mathcal{D}_{y}=\{y\in\mathbb{C}_{y}:|y|\leq1,|X_{0}(y)|\leq1\}$. For $y\in \mathcal{D}_{y}-[y_{1},y_{2}]$ both $g(X_{0}(y))$, $h_{0}(y)$ are analytic and the left-hand side of \eqref{con} can be analytically continued (Appendix \ref{defi}) up to the slit $[y_{1},y_{2}]$ (and similarly the functions $g_{n_{2}}(\cdot)$, $n_{2}=1,\ldots,N_{2}$, and $h_{n_{1}}(\cdot)$, $n_{1}=1,\ldots,N_{1}$). For convenience, assume that $B(X_{0}(y),y)\neq 0$ for any $y\in [y_{1},y_{2}]$. Therefore, we can divide with $B(X_{0}(y),y)$ and \eqref{con} is written as
\begin{equation}
\begin{array}{c}
\frac{A(X_{0}(y),y)}{B(X_{0}(y),y)}g_{0}(X_{0}(y))=-h_{0}(y)-\frac{C(X_{0}(y),y)}{B(X_{0}(y),y)},\,y\in[y_{1},y_{2}].
\end{array}
\label{cons}
\end{equation}
For $y\in[y_{1},y_{2}]$, $X_{0}(y)=x\in\mathcal{M}$, so that $Y_{0}(X_{0}(y))=y$. Thus, \eqref{cons} can be written as
\begin{equation}
\begin{array}{c}
\frac{A(x,Y_{0}(x))}{B(x,Y_{0}(x))}g_{0}(x)=-h_{0}(Y_{0}(x))-\frac{C(x,Y_{0}(x))}{B(x,Y_{0}(x))},\,x\in\mathcal{M}.
\end{array}
\label{con1}
\end{equation}
Note that $g_{0}(x)$ is holomorphic in $D_{x}=\{x\in\mathbb{C}:|x|<1\}$, and continuous in $\bar{D}_{x}=\{x\in\mathbb{C}:|x|\leq1\}$. However, $g_{0}(x)$ may have poles in $S_{x}=\mathcal{M}^{+}\cap\bar{D}_{x}^{c}$, where $\bar{D}_{x}^{c}=\{x\in\mathbb{C}:|x|>1\}$. These poles (if they exist) coincide with the zeros of $A(x,Y_{0}(x))$ in $S_{x}$. 

Taking into account the (possible) poles of $g_{0}(x)$ (say, $\xi_{1}$,...,$\xi_{k}$, and noticing that $h_{0}(Y_{0}(x))$ is real for $x\in\mathcal{M}$ we conclude in,
\begin{equation}
\begin{array}{c}
Re(iU(x)f(x))=w(x),\,x\in\mathcal{M},
\end{array}
\label{bou}
\end{equation}
where
\begin{displaymath}
\begin{array}{lcr}
U(x)=\frac{A(x,Y_{0}(x))}{\prod_{i=1}^{k}(x-\xi_{i})B(x,Y_{0}(x))},&f(x)=\prod_{i=1}^{k}(x-\xi_{i})g_{0}(x),&w(x)=Im(\frac{C(x,Y_{0}(x))}{B(x,Y_{0}(x))}).
\end{array}
\end{displaymath}

In order to solve (\ref{bou}), we must first conformally transform it from $\mathcal{M}$ to the unit circle $\mathcal{C}$. Let the mapping, $z=\gamma(x):\mathcal{M}^{+}\to \mathcal{C}^{+}$, and its inverse $x=\gamma_{0}(z):\mathcal{C}^{+}\to \mathcal{M}^{+}$. Then, we have the following problem: Find a function $\tilde{T}(z)=f(\gamma_{0}(z))$ regular for $z\in \mathcal{C}^{+}$, and continuous for $z\in\mathcal{C}\cup \mathcal{C}^{+}$ such that, 
\begin{equation}
Re(iU(\gamma_{0}(z))\tilde{T}(z))=w(\gamma_{0}(z)),\,z\in\mathcal{C}.
\end{equation}

To obtain the conformal mappings, we need to represent $\mathcal{M}$ in polar coordinates, i.e., $\mathcal{M}=\{x:x=\rho(\phi)\exp(i\phi),\phi\in[0,2\pi]\}.$ This procedure is described in detail in \cite{coh1}. We briefly summarized the basic steps: Since $0\in \mathcal{M}^{+}$, for each $x\in\mathcal{M}$, a relation between its absolute value and its real part is given by $|x|^{2}=m(Re(x))$ (see Lemma \ref{lemp1}). Given the angle $\phi$ of some point on $\mathcal{M}$, the real part of this point, say $\delta(\phi)$, is the solution of $\delta-\cos(\phi)\sqrt{m(\delta)}$, $\phi\in[0,2\pi].$ Since $\mathcal{M}$ is a smooth, egg-shaped contour, the solution is unique. Clearly, $\rho(\phi)=\frac{\delta(\phi)}{\cos(\phi)}$, and the parametrization of $\mathcal{M}$ in polar coordinates is fully specified. Then, the mapping from $z\in \mathcal{C}^{+}$ to $x\in \mathcal{M}^{+}$, where $z = e^{i\phi}$ and $x= \rho(\tilde{\psi}(\phi))e^{i\tilde{\psi}(\phi)}$, satisfying $\gamma_{0}(0)=0$ and $\gamma_{0}(z)=\overline{\gamma_{0}(\overline{z})}$ is uniquely determined by (see \cite{coh1}, Section I.4.4),
\begin{equation}
\begin{array}{rl}
\gamma_{0}(z)=&z\exp[\frac{1}{2\pi}\int_{0}^{2\pi}\log\{\rho(\tilde{\psi}(\omega))\}\frac{e^{i\omega}+z}{e^{i\omega}-z}d\omega],\,|z|<1,\\
\tilde{\psi}(\phi)=&\phi-\int_{0}^{2\pi}\log\{\rho(\tilde{\psi}(\omega))\}\cot(\frac{\omega-\phi}{2})d\omega,\,0\leq\phi\leq 2\pi,
\end{array}
\label{zx}
\end{equation}
i.e., $\tilde{\psi}(.)$ is uniquely determined as the solution of a Theodorsen's integral equation with $\tilde{\psi}(\phi)=2\pi-\tilde{\psi}(2\pi-\phi)$. Due to the correspondence-boundaries theorem (see Appendix \ref{defi}), $\gamma_{0}(z)$ is continuous in $\mathcal{C}\cup \mathcal{C}^{+}$. 

The solution of the boundary value problem depends on its index $\chi=\frac{-1}{\pi}[arg\{U(x)\}]_{x\in \mathcal{M}}$ (see Appendix \ref{defi}). If $\chi\leq 0$, our problem has at most one linearly independent solution. The solution of the problem defined in (\ref{bou}) is given for $z\in \mathcal{C}_{x}^{+}$ by,
\begin{equation}
\begin{array}{l}
g_{0}(\gamma_{0}(z))=\prod_{i=1}^{k}(\gamma_{0}(z)-\xi_{i})^{-1}e^{i\sigma(z)}z^{\chi}[iK+\frac{1}{2\pi i}\int_{|t|=1}e^{\omega_{1}(t)}\delta(t)\frac{t+z}{t-z}\frac{dt}{t}],
\end{array}
\label{sool1}
\end{equation}
where $K$ is a constant to be determined, $\omega_{1}(z)=Im(\sigma(z))$, $\delta(z)=\frac{w(\gamma_{0}(z))}{|U(\gamma_{0}(z))|}$, $U(\gamma_{0}(z))=b_{1}(z)+ia_{1}(z)$ and
\begin{displaymath}
\begin{array}{rl}
\sigma(z)=&\frac{1}{2\pi i}\int_{|t|=1}(\arctan\frac{b_{1}(t)}{a_{1}(t)}-\chi\arg t)\frac{t+z}{t-z}\frac{dt}{t}.
\end{array}
\end{displaymath}

Note that $g_{0}(x)=g_{0}(\gamma_{0}(\gamma(x)))$. When $\chi=0$, $K$ can be determined from the solution to $g_{0}(0)$. If $\chi<0$, then $K=0$ and a solution exists if \cite{ga}
\begin{equation}
\frac{1}{2\pi i}\int_{|t|=1}e^{\omega_{1}(t)}\delta(t)t^{-k-1}dt=0,
\label{index1}
\end{equation}
for $k=0,1,...,-\chi-1.$ Similarly, we can obtain $h_{0}(y)$, by solving another Riemann-Hilbert problem on $\mathcal{L}$. Having obtained $g_{0}(x)$ and $h_{0}(y)$, we can obtain $g(x,y)$ from \eqref{fun} in terms of the unknown probabilities $\pi(N_{1},n_{2})$, $n_{2}=0,\ldots,N_{2}-1$, $\pi(n_{1},N_{2})$, $n_{1}=0,\ldots,N_{1}-1$, $\pi(N_{1},N_{2})$, as we did in Section \ref{solution}. 

The following steps summarize the way we fully determine the stationary distribution: 
\begin{enumerate}
\item The $N_{1}\times N_{2}$ equations for $S_{a}$ involve $(N_{1}+1)\times(N_{2}+1)$ unknowns: $\pi(n_{1},n_{2})$ for $n_{1}=0,1,...,N_{1}$, $n_{2}=0,1,...,N_{2}$, excluding $\pi(N_{1},N_{2})$. Thus, we further need $N_{1}+N_{2}$ equations that involve the unknowns $\pi(N_{1},n_{2})$, $n_{2}=0,1,...,N_{2}-1$, and $\pi(n_{1},N_{2})$, $n_{1}=0,1,...,N_{1}-1$.
\item As in Section \ref{pre}, we have expressed the generating functions of equilibrium probabilities that are associated with the states of subregions $S_{b}$, $S_{c}$ in terms of $g_{0}(x)$, $h_{0}(y)$, which are obtained in terms of the solution of a Riemann-Hilbert boundary value problem. That solution obtains $g_{0}(x)$, $h_{0}(y)$ in terms of $A(x,y)$, $B(x,y)$ and $C(x,y)$. The first two are known, and the third one contains the $N_{1}+N_{2}+1$ unknown probabilities. The $N_{1}+N_{2}$ probabilities mentioned in point 1 and $\pi(N_{1},N_{2})$. These additional equations are derived from the (integral) expressions of $g_{0}(x)$, $h_{0}(y)$ as follows at steps 3 and 4.
\item The solution of \eqref{sys1}, \eqref{sys2} is similar to those in Section \ref{pre}, but now the coefficients are:
\begin{displaymath}
\begin{array}{cccc}
e_{n_{2}}(x)=\frac{q_{n_{2}}(x)}{x^{n_{2}}},&t_{n_{2}}(x)=\frac{l_{n_{2}}(x)}{x^{n_{2}}},&
\tilde{e}_{n_{1}}(y)=\frac{\tilde{q}_{n_{1}}(y)}{y^{n_{1}}},&\tilde{t}_{n_{1}}(y)=\frac{\tilde{l}_{n_{1}}(y)}{y^{n_{1}}},
\end{array}
\end{displaymath}
where $q_{n_{2}}(x)$, $l_{n_{2}}(x)$, $\tilde{q}_{n_{1}}(y)$, $\tilde{l}_{n_{1}}(y)$ are polynomials. Therefore, to derive additional equations for the unknown probabilities in point 1, we use (\ref{r2}), (\ref{r3})  and take the derivatives of $g_{0}(x)$, $h_{0}(y)$ at point 0, i.e.,
\begin{equation}
\begin{array}{rl}
n_{2}!\pi(N_{1},n_{2})=&\frac{d^{n_{2}}}{dx^{n_{2}}}[e_{n_{2}}(x)g_{0}(x)+t_{n_{2}}(x)]|_{x=0},\,n_{2}=1,...,N_{2},\vspace{2mm}\\
n_{1}!\pi(n_{1},N_{2})=&\frac{d^{n_{1}}}{dy^{n_{1}}}[\tilde{e}_{n_{1}}(y)h_{0}(y)+\tilde{t}_{n_{1}}(y)]|_{y=0},\,n_{1}=1,...,N_{1}.\end{array}\label{fc}
\end{equation}
This procedure will provide the $N_{1}+N_{2}$ equations referred at step $1$. 
\item The last additional equation for the determination of the last unknown $\pi(N_{1},N_{2})$ is done by the use of the normalization equation:
\begin{equation}
\begin{array}{c}
1=\sum_{n_{1}=0}^{N_{1}-1}\sum_{n_{2}=0}^{N_{2}-1}\pi(n_{1},n_{2})+\sum_{n_{1}=0}^{N_{1}-1}h_{n_{1}}(1)+\sum_{n_{2}=0}^{N_{2}-1}g_{n_{2}}(1)+g(1,1).
\end{array}
\label{nor}
\end{equation}
\end{enumerate}

Having obtained the equilibrium probabilities associated with subregion $S_{a}$, and the pgfs reffering to the equilibrium probabilities in subregions $S_{i}$, $i=b,c,d$, we can obtain useful performance metrics; i.e., the expression \eqref{ex1} in subsection \ref{perm} remains valid in this section. The only difference relies on the way we solved \eqref{fun}, by following an analysis in \cite{fay1}.\vspace{-0.25in}
 \section{Overview of definitions and theorems used in this paper}\label{defi}
\begin{definition}
(\cite{marku}) A function $f:\mathbb{C}\to\mathbb{C}$ is called regular at a point $z_{0}\in\mathbb{C}$ if there exists a neighbourhood of $z_{0}$ in every point of which $f$ is complex differentiable; $f$ is called regular (equivalently analytic of holomorphic) in a domain $D$ if it is regular at every point of $D$.  
\end{definition}
\begin{definition}
(\cite{marku}) A complex function $z=f(t)$ of a real variable, defined on a closed interval $a\leq t\leq b$ is said to define a \textit{(continuous) curve}. If the same point $z$ corresponds to more than one parameter value in the half-open interval $a\leq t<b$, we say that $z$ is a \textit{multiple point} of the curve $z=f(t)$, $a\leq t\leq b$. A curve with no multiple points is called a \textit{Jordan curve}. A closed curve is called a \textit{contour}. A continuous curve $\mathcal{L}$ is called a \textit{smooth curve}, if among its various parametric representations there is at least one, i.e., $z=f(t):=\gamma(t)+i\delta(t)$, such that $f(t)$ has a continuous non-vanishing derivative $f^{\prime}(t)$ at every point of the interval $[a,b]$.
\end{definition}
\begin{definition}
(\cite{ga}) Consider a function $G$, defined on a contour $\mathcal{L}$. The increment of the argument of $G(t)$ as $t$ traverses $\mathcal{L}$ once in the positive direction, divided by $2\pi$, is called the \textit{index} of $G(.)$ on $\mathcal{L}$. 
\end{definition}
\begin{definition}
(\cite{neh})
A continuous function $f$ is said to be univalent in the domain $D$ if $z_{1}\neq z_{2}$ implies $f(z_{1})\neq f(z_{2})$ for $z_{1},z_{2}\in D$. A regular function that is univalent is called a \textit{conformal mapping}. For more information see \cite{neh}, \cite[Section I.8]{marku}.
\end{definition}
\begin{definition}
Let $D$ a domain, $E\subset D$, and $f(z)$ a function defined on $E$. A function $F(z)$, which is regular in the domain $D$ and coincides with $f(z)$ on the set $E$ is called an analytic continuation of $f(z)$ into the domain $D$. Analytic continuation deals
with the problem of properly redefining an analytic function so as to extend
its domain of analyticity.
\end{definition}
\begin{theorem}
(\textit{The corresponding boundaries theorem} \cite{neh}) Denote by $\mathcal{L}^{+}$ the interior of a contour $\mathcal{L}$. If $\mathcal{L}_{1}^{+}$, $\mathcal{L}_{2}^{+}$ are two domains bounded by smooth contours, then, the conformal mapping $\omega: \mathcal{L}_{1}^{+}\to\mathcal{L}_{2}^{+}$ is continuous in $\mathcal{L}_{1}^{+}\cup\mathcal{L}_{1}$ and establishes a one-to-one correspondence among the points of $\mathcal{L}_{1}$ and $\mathcal{L}_{2}$.
\end{theorem}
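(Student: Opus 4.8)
The plan is to prove this as the classical Carath\'eodory boundary-correspondence theorem, reducing the general statement to the model case of the unit disc. First I would factor the map: by the Riemann mapping theorem there exist conformal bijections $f_j:\mathbb{D}\to\mathcal{L}_j^{+}$, $j=1,2$, where $\mathbb{D}$ is the open unit disc, so that $\omega=f_2\circ f_1^{-1}$. It then suffices to prove the boundary statement for a single Riemann map $f:\mathbb{D}\to\Omega^{+}$ onto a domain bounded by a smooth Jordan contour $\Omega$, namely that $f$ extends to a homeomorphism of $\overline{\mathbb{D}}$ onto $\overline{\Omega^{+}}$; the assertion for $\omega$ then follows by composition, since a composition of two boundary homeomorphisms is again one.

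The first and main analytic step is to establish that $f$ extends continuously to $\partial\mathbb{D}$, for which I would use the length--area (Koebe) method. Fix $\zeta_0\in\partial\mathbb{D}$ and for small $r$ let $C_r=\{z\in\mathbb{D}:|z-\zeta_0|=r\}$ be the corresponding crosscut, with image arc of length $L(r)=\int_{C_r}|f'(z)|\,|dz|$. Applying the Cauchy--Schwarz inequality on each arc and then integrating in polar coordinates centred at $\zeta_0$ (so that $dA=r\,dr\,d\theta$) yields
\[
\int_{0}^{\rho}\frac{L(r)^{2}}{r}\,dr\ \le\ 2\pi\iint_{\mathbb{D}}|f'(z)|^{2}\,dA(z)\ =\ 2\pi\,\mathrm{Area}(\Omega^{+})\ <\ \infty,
\]
where the equality uses that the Jacobian of a univalent $f$ is $|f'|^{2}$. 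Hence there is a sequence $r_n\to0$ with $L(r_n)\to0$, so the image arcs $f(C_{r_n})$ have diameters tending to zero; because $\Omega$ is a Jordan curve, their endpoints on $\Omega$ must converge to a single point, which forces the radial and non-tangential limits of $f$ at $\zeta_0$ to exist and coincide. This gives a continuous extension $f:\overline{\mathbb{D}}\to\overline{\Omega^{+}}$ with $f(\partial\mathbb{D})\subset\Omega$.

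Next I would verify that this extension is injective on $\partial\mathbb{D}$, which is where the Jordan property of $\Omega$ is essential. Suppose $f(\zeta_1)=f(\zeta_2)=w_0$ for distinct boundary points $\zeta_1,\zeta_2$. Joining $\zeta_1$ to $\zeta_2$ by a crosscut of $\mathbb{D}$ and taking its image produces a Jordan curve $J\subset\overline{\Omega^{+}}$ meeting $\Omega$ only at $w_0$; $J$ separates the plane, and the two sub-arcs of $\partial\mathbb{D}$ cut off by $\zeta_1,\zeta_2$ would be forced to map into different complementary components of $J$, contradicting the connectedness of the single contour $\Omega$. Hence $f|_{\partial\mathbb{D}}$ is injective. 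Since $f(\overline{\mathbb{D}})$ is compact and contains $\Omega^{+}=f(\mathbb{D})$, it equals $\overline{\Omega^{+}}$, and the open mapping $f$ therefore sends $\partial\mathbb{D}$ onto $\Omega$; a continuous bijection of the compact $\overline{\mathbb{D}}$ onto the Hausdorff space $\overline{\Omega^{+}}$ is automatically a homeomorphism. Composing, $\omega=f_2\circ f_1^{-1}$ is continuous on $\mathcal{L}_1^{+}\cup\mathcal{L}_1$ and restricts to a homeomorphism of $\mathcal{L}_1$ onto $\mathcal{L}_2$, as claimed. The hard part throughout is the continuity step: obtaining the boundary extension from the length--area estimate while ruling out oscillatory boundary behaviour, which is precisely the delicate content of Carath\'eodory's theorem.
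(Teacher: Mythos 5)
The paper itself offers no proof of this statement: it is quoted as a classical result from \cite{neh} (it is Carath\'eodory's boundary--correspondence theorem), so your attempt has to be judged on its own merits rather than against an argument in the text. Your overall architecture --- factor $\omega=f_{2}\circ f_{1}^{-1}$ through Riemann maps, prove the boundary extension for a single Riemann map onto a Jordan domain via the length--area estimate, deduce injectivity on the boundary, and conclude with the compact-to-Hausdorff homeomorphism lemma --- is exactly the standard and correct route; as a bonus it uses only the Jordan property and not smoothness, so it proves slightly more than is stated.

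There is, however, a genuine gap in the injectivity step. After producing the Jordan curve $J=f(\mathrm{crosscut})\cup\{w_{0}\}$, you claim a contradiction because the two sub-arcs of $\partial\mathbb{D}$ ``map into different complementary components of $J$, contradicting the connectedness of $\Omega$.'' This is not a contradiction: the closures of the two complementary components of $\mathbb{C}\setminus J$ share the point $w_{0}\in\Omega$, so a connected $\Omega$ can lie in the union of the two closed components, passing through $w_{0}$. The correct continuation is: $\Omega\setminus\{w_{0}\}$ is connected, disjoint from $J$, and every one of its points is a limit of points of the exterior of $\Omega$, which is a connected unbounded set disjoint from $J\subset\overline{\Omega^{+}}$; hence $\Omega\setminus\{w_{0}\}$ lies in the unbounded component of $\mathbb{C}\setminus J$, and the closure of the bounded component meets $\Omega$ only at $w_{0}$. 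The sub-arc $\alpha\subset\partial\mathbb{D}$ bounding the piece of $\mathbb{D}$ whose image is that bounded component then satisfies $f(\alpha)\subset\Omega\cap\overline{(\text{bounded component})}=\{w_{0}\}$, i.e.\ $f$ is \emph{constant on a boundary arc}. To finish you need an analytic ingredient that is absent from your proposal: a nonconstant holomorphic function on $\mathbb{D}$, continuous up to an arc of $\partial\mathbb{D}$, cannot be constant on that arc (Schwarz reflection across the arc plus the identity theorem, or the Riesz uniqueness theorem); this contradicts univalence and closes the argument. A smaller, and at least acknowledged, gap sits in the continuity step: from $\mathrm{diam}\,f(C_{r_{n}})\to 0$ you must still show that the crosscut $f(C_{r_{n}})$ cuts off a subdomain of $\Omega^{+}$ of small diameter (two nearby points of a Jordan curve bound one small subarc, and one must identify which complementary piece is the image of the small sector); you flag this as the delicate point but do not carry it out.
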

\begin{theorem}
(\textit{The principle of corresponding boundaries} \cite[Section I.4.2, p. 67]{coh1}) Let $\mathcal{L}_{1}^{+}$, $\mathcal{L}_{2}^{+}$ be two domains bounded by piecewise smooth contours $\mathcal{L}_{1}$, $\mathcal{L}_{2}$. If $f(z)$ is regular in $\mathcal{L}_{1}^{+}$ and continuous in $\mathcal{L}_{1}^{+}\cup\mathcal{L}_{1}$, and maps $\mathcal{L}_{1}$ one-to-one onto $\mathcal{L}_{2}$, then $f(z)$ is univalent in $\mathcal{L}_{1}^{+}\cup\mathcal{L}_{1}$. If $f(z)$ preserves the positive directions on $\mathcal{L}_{1}$ and $\mathcal{L}_{2}$, then $f(z)$ maps $\mathcal{L}_{1}^{+}$ conformally onto $\mathcal{L}_{2}^{+}$, otherwise onto $\mathcal{L}_{2}^{-}$.
\end{theorem}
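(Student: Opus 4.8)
The plan is to derive both assertions at once from the argument principle, using as the basic counting device the number of solutions of $f(z)=w_{0}$ inside $\mathcal{L}_{1}$ for a generic target $w_{0}\notin\mathcal{L}_{2}$. First I would fix $w_{0}\in\mathbb{C}\setminus\mathcal{L}_{2}$; since $f$ is regular in $\mathcal{L}_{1}^{+}$, continuous on $\mathcal{L}_{1}^{+}\cup\mathcal{L}_{1}$, and $f(\mathcal{L}_{1})=\mathcal{L}_{2}$ avoids $w_{0}$, the argument principle applied to $f(z)-w_{0}$ gives
\[
N(w_{0}):=\#\{z\in\mathcal{L}_{1}^{+}:f(z)=w_{0}\}
=\frac{1}{2\pi i}\oint_{\mathcal{L}_{1}}\frac{f'(z)}{f(z)-w_{0}}\,dz,
\]
with zeros counted by multiplicity. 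Substituting $w=f(z)$ converts the right-hand side into
\[
\frac{1}{2\pi i}\oint_{f(\mathcal{L}_{1})}\frac{dw}{w-w_{0}}
=\mathrm{ind}_{\mathcal{L}_{2}}(w_{0}),
\]
the index of the image curve about $w_{0}$ in the sense of Appendix \ref{defi}. Because $f$ maps $\mathcal{L}_{1}$ one-to-one onto $\mathcal{L}_{2}$, as $z$ runs once around $\mathcal{L}_{1}$ positively the point $w=f(z)$ runs exactly once around $\mathcal{L}_{2}$, in the positive sense if orientation is preserved and in the negative sense otherwise.

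Next I would evaluate the index by the Jordan curve theorem: for the positively oriented $\mathcal{L}_{2}$ it equals $1$ when $w_{0}\in\mathcal{L}_{2}^{+}$ and $0$ when $w_{0}\in\mathcal{L}_{2}^{-}$. Hence in the orientation-preserving case $N(w_{0})=1$ for every $w_{0}\in\mathcal{L}_{2}^{+}$ and $N(w_{0})=0$ for every $w_{0}\in\mathcal{L}_{2}^{-}$. The first equality says each interior value is attained exactly once and with multiplicity one, so $f'$ does not vanish at the unique preimage; the second says no exterior value is attained, so $f(\mathcal{L}_{1}^{+})=\mathcal{L}_{2}^{+}$. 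Univalence on the closure then follows by combining three observations: two interior points cannot share a value (since $N=1$); an interior and a boundary point cannot, as their images lie in $\mathcal{L}_{2}^{+}$ and on $\mathcal{L}_{2}$ respectively; and two boundary points cannot, by the injectivity of $f|_{\mathcal{L}_{1}}$ assumed in the hypothesis. Being a holomorphic bijection $\mathcal{L}_{1}^{+}\to\mathcal{L}_{2}^{+}$ with non-vanishing derivative, $f$ is a conformal map, and its continuous injective boundary extension makes it a homeomorphism of the closures.

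For the orientation-reversing alternative I would run the identical computation with the sign of the index flipped, obtaining $N(w_{0})=-1$ on $\mathcal{L}_{2}^{+}$ and $0$ on $\mathcal{L}_{2}^{-}$; since a preimage count for a function regular in $\mathcal{L}_{1}^{+}$ is non-negative, the bijective correspondence then places the image in the complementary (unbounded) component, i.e. $f$ maps $\mathcal{L}_{1}^{+}$ conformally onto $\mathcal{L}_{2}^{-}$, which is the \emph{otherwise} clause; this case is most cleanly reduced to the preceding one by composing with a fractional-linear involution that interchanges $\mathcal{L}_{2}^{+}$ and $\mathcal{L}_{2}^{-}$. The main technical obstacle, and the step I would treat most carefully, is the legitimacy of the substitution $w=f(z)$ and of the boundary integral itself when $f$ is only assumed continuous, not differentiable, on $\mathcal{L}_{1}$. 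A clean way around this is to approximate $\mathcal{L}_{1}$ from inside by level contours $\mathcal{L}_{1}^{(r)}\subset\mathcal{L}_{1}^{+}$ on which $f$ is genuinely analytic, apply the argument principle there, and pass to the limit using the uniform continuity of $f$ up to $\mathcal{L}_{1}$ together with the fact that $N(w_{0})$ is integer-valued and locally constant off $\mathcal{L}_{2}$; here the piecewise-smoothness of $\mathcal{L}_{1},\mathcal{L}_{2}$ and the homeomorphism property of $f$ on the boundary are exactly what guarantee that $f(\mathcal{L}_{1}^{(r)})$ converges to $\mathcal{L}_{2}$ with the correct orientation.
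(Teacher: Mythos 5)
You should first note that the paper does not prove this statement at all: it appears in Appendix \ref{defi} as a quoted classical tool from \cite[Section I.4.2]{coh1}, so your proposal can only be judged against the standard literature argument --- which is indeed the winding-number/argument-principle proof you outline. For the orientation-preserving clause your argument is essentially that classical proof and is sound in structure, including the honest treatment of the fact that $f$ need not be differentiable on $\mathcal{L}_{1}$ (approximation by interior contours, integrality and local constancy of $N(w_{0})$). One step is missing, however: from $N(w_{0})=0$ for $w_{0}\in\mathcal{L}_{2}^{-}$ you conclude $f(\mathcal{L}_{1}^{+})=\mathcal{L}_{2}^{+}$, but the computation only yields $f(\mathcal{L}_{1}^{+})\subset\mathcal{L}_{2}^{+}\cup\mathcal{L}_{2}$. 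You still have to exclude that an interior point $z_{0}\in\mathcal{L}_{1}^{+}$ maps \emph{onto} $\mathcal{L}_{2}$, and this is precisely the case your univalence step (``an interior and a boundary point cannot share a value'') silently relies on. The standard fix is the open mapping theorem: $f$ is non-constant, so $f(\mathcal{L}_{1}^{+})$ is open; if it contained a point of $\mathcal{L}_{2}$, then --- since a Jordan curve is the common boundary of both complementary components --- it would contain points of $\mathcal{L}_{2}^{-}$, contradicting $N\equiv 0$ there.

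The orientation-reversing clause is where the proposal genuinely breaks down. Your computation gives $N(w_{0})=-1$ for $w_{0}\in\mathcal{L}_{2}^{+}$, while $N(w_{0})\geq 0$ by definition; this is a contradiction, and the only valid inference is that an orientation-reversing boundary correspondence is \emph{impossible} for $f$ regular in the bounded domain $\mathcal{L}_{1}^{+}$ --- you cannot ``relocate'' the image to $\mathcal{L}_{2}^{-}$ to escape it. Indeed, $f(\mathcal{L}_{1}^{+}\cup\mathcal{L}_{1})$ is compact, hence bounded, so it can never be the unbounded domain $\mathcal{L}_{2}^{-}$; the ``otherwise'' clause is vacuous for regular $f$, and its substantive version (the one actually used in \cite{coh1}) concerns maps that are meromorphic with a single simple pole in $\mathcal{L}_{1}^{+}$, or maps defined on the exterior domain, where the argument principle gives $Z-P=-1$ with $P=1$, i.e.\ exactly one preimage for each $w_{0}\in\mathcal{L}_{2}^{-}$ and none for $w_{0}\in\mathcal{L}_{2}^{+}$. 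Your proposed repair --- composing with ``a fractional-linear involution that interchanges $\mathcal{L}_{2}^{+}$ and $\mathcal{L}_{2}^{-}$'' --- also fails: a M\"obius transformation maps circles and lines to circles and lines, so no such involution preserving a general smooth contour $\mathcal{L}_{2}$ exists; and even in the circular case, composing would introduce a pole and take you outside the hypothesis that the map is regular.
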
\vspace{-0.3in}
\section{Proof of Theorem \ref{th1}}\label{nnrw}
It is readily seen that $\Psi(gs,gs^{-1})$ is for every fixed $|s|=1$ regular in $|g|<1$, continuous in $|g|\leq1$, and for $|g|=1$:
\begin{displaymath}
|\Psi(gs,gs^{-1})|\leq 1=|g^{2}|,
\end{displaymath}
and the proof of the first statement is a straightforward application of Rouch\'{e}'s theorem. Moreover, for $s=1$, it is readily seen that $g(1)=1$ is a zero of multiplicity one provided that $E_{x}+E_{y}<0$. Similarly, for $s=-1$, $g(-1)=-1$ is also a simple zero if $E_{x}+E_{y}<0$. The second statement follows from \eqref{fty} when we first sum in the denominator the terms for the indices $i$, $j$ for which $i+j$ and $i-j$ are both even, and then when both are odd; for more details see also \cite[Lemma 2.1, II.3.2, p. 153]{coh1}, and \cite[Theorem II.2.2.1, p. 65]{rw}.\hfill$\square$\vspace{-0.2in}
\section{Proof of Lemma \ref{lemaw}}\label{ape} For assertion 1, it is readily seen that $R(gs,gs^{-1})=0$ is written as
\begin{equation}
\begin{array}{c}
g^{2}=g^{2}p_{0,0}+g^{3}sp_{1,0}+gs^{-1}p_{-1,0}+g^{4}p_{1,1}+g^{3}s^{-1}p_{0,1}+g^{2}s^{-2}p_{-1,1}+gsp_{0,-1}+g^{2}s^{2}p_{1,-1}.
\end{array}
\label{hel}
\end{equation} 
It is clear that $g:=g(s)=0$ is a zero of $R(gs,gs^{-1})$. For $|s|=1$, $0<|g|\leq 1$, \eqref{hel} is reduced to $m_{1}(g)=m_{2}(g,s)$, where $m_{1}(g)=g$, $m_{2}(g,s)=gp_{0,0}+g^{2}sp_{1,0}+s^{-1}p_{-1,0}+g^{3}p_{1,1}+g^{2}s^{-1}p_{0,1}+gs^{-2}p_{-1,1}+sp_{0,-1}+gs^{2}p_{1,-1}.$ We now show that if $E_{x}+E_{y}<0$ and for fixed $s$ with $|s|=1$, $s\neq\pm 1$, $m_{1}(g)=m_{2}(g,s)$ has a unique root in $|g|\leq 1$. 

Clearly, $m_{1}(g)$ has a single zero in the complex unit disk, and we wish to establish that $|m_{1}(g)|>|m_{2}(g,s)|$ for $|g|=1$, $|s|=1$, $s\neq\pm1$. Note that $|m_{1}(g)|=|g|=m_{1}(|g|)$, and $|m_{2}(g,s)|\leq m_{2}(|g|,1)$. Thus, we require $m_{1}(|g|)>m_{2}(|g|,1)$ for $|g|=1$. However, for $|g|=1$, $m_{1}(|g|)=m_{1}(1)=1=m_{2}(1,1)=m_{2}(|g|,1)$. To resolve this issue, we evaluate $m_{1}(|g|)$, $m_{2}(|g|,1)$ on the circle $|g|=1+\epsilon$ with $\epsilon$ small and positive. To accomplish
this task, we use the Taylor expansion $m_{2}(1+\epsilon,1)=m_{2}(1,1)+\epsilon m_{2}^{\prime}(1,1)+o(\epsilon)$ (where $m_{2}^{\prime}(1,1)=\frac{d}{d|g|}m_{2}(|g|,1)|_{|g|=1}$), and similarly for $m_{1}(1+\epsilon)$. So we need to show that $m_{1}(1+\epsilon)>m_{2}(1+\epsilon,1)$. Since, $m_{1}(1)=m_{2}(1,1)$ we are left to show that $m_{1}^{\prime}(1)>m_{2}^{\prime}(1,1)$, or equivalently 
\begin{eqnarray}
p_{-1,0}+p_{0,-1}>p_{1,0}+2p_{1,1}+p_{0,1}.\label{hel2}
\end{eqnarray}
It is easy to see that \eqref{hel2} is related to the mean drifts in region $S_{d}$, and it is always true as $E_{x}+E_{y}<0$. So, for sufficiently small $\epsilon>0$ we
have that $m_{1}(|g|)>m_{2}(|g|,1)$ for $|g|\in(1,1+\epsilon]$, which proves assertion 1. If $s=1$, then $g(1)=1$ is the only root of $m_{1}(g)=m_{2}(g,1)$ in $|g|\leq 1$, and that $m_{2}^{\prime}(1,1)<1$, i.e., that $E_{x}+E_{y}<0$, implies that this is a simple root. Similarly, for $s=-1$, $g(-1)=-1$ is the unique root of $m_{1}(g)=m_{2}(g,1)$ in $|g|\leq 1$ with multiplicity 1 if $E_{x}+E_{y}<0$. Equation \eqref{cas} follows directly from \eqref{fty}. Assertion 2 also follows from \eqref{cas}; see also \cite[Lemma 10.1, II.3.10]{coh1}.\hfill$\square$\vspace{-0.2in}
\section{On the induced Markov chains in subsection \ref{appl1}}\label{apc}
For $Q_{1,n}>N_{1}$, $Q_{2,n}$ evolves as a one-dimensional RW with one step transition probabilities $w^{(2)}_{j}(n_{2})=P(Q_{2,n+1}=n_{2}+j|Q_{2,n}=n_{2})$, $j=0,\pm1$, for $n_{2}=0,1,\ldots,$ given by
\begin{displaymath}
\begin{array}{rl}
w^{(2)}_{1}(n_{2})=&p_{0,1}(N_{1},n_{2})+p_{1,1}(N_{1},n_{2})+p_{-1,1}(N_{1},n_{2})=\lambda_{2}(N_{1},n_{2})[1-\bar{a}_{1}(N_{1},n_{2})a_{2}(N_{1},n_{2})],\\
w^{(2)}_{-1}(n_{2})=&p_{1,-1}(N_{1},n_{2})+p_{0,-1}(N_{1},n_{2})=\bar{\lambda}_{2}(N_{1},n_{2})\bar{a}_{1}(N_{1},n_{2})a_{2}(N_{1},n_{2}),\\
w^{(2)}_{0}(n_{2})=&p_{1,0}(N_{1},n_{2})+p_{-1,1}(N_{1},n_{2})+p_{0,0}(N_{1},n_{2}).
\end{array}
\end{displaymath}
Note that for $n_{2}\geq N_{2}$, $w^{(2)}_{j}(n_{2}):=w^{(2)}_{j}$, $j=0,\pm1$. Recall $\psi:=(\psi_{0},\psi_{1},\ldots)$ its stationary distribution. Then, simple calculations yield
\begin{displaymath}
\begin{array}{rl}
\psi_{n_{2}}=&\psi_{0}\prod_{j=0}^{n_{2}-1}\frac{w^{(2)}_{1}(j)}{w^{(2)}_{-1}(j+1)},\,1\leq j\leq N_{2},\\
\psi_{n_{2}}=&\psi_{N_{2}}\left(\frac{w^{(2)}_{1}}{w^{(2)}_{-1}}\right)^{n_{2}-N_{2}},\,j\geq N_{2}+1,\\
\psi_{0}=&[1+\sum_{n_{2}=1}^{N_{2}-1}\prod_{j=0}^{n_{2}-1}\frac{w^{(2)}_{1}(j)}{w^{(2)}_{-1}(j+1)}+\frac{w^{(2)}_{-1}}{w^{(2)}_{-1}-w^{(2)}(1)}\prod_{j=0}^{N_{2}-1}\frac{w^{(2)}_{1}(j)}{w^{(2)}_{-1}(j+1)}]^{-1},
\end{array}
\end{displaymath} 
where $w^{(2)}_{-1}-w^{(2)}(1)=\bar{a}_{1}a_{2}-\lambda_{2}<0$. Similar argumentation follows for the component $Q_{1,n}$, which evolves as a one-dimensional RW for $Q_{2,n}>N_{2}$.\vspace{-0.2in}
\section{One-step transition probabilities for the model in subsection \ref{appl2}}\label{onestep}
The one-step transition probabilities for the model in subsection \ref{appl2} are given below:
\begin{displaymath}
\begin{array}{rl}
p_{-2,0}(\underline{n})=&\theta_{1}(n_{1})\bar{\theta}_{2}(n_{2})\beta_{1}(\underline{n}-\underline{1}_{1})\mu_{1}(n_{1}-1)d_{0,0}(\underline{n})\\&+\theta_{1}(n_{1})\theta_{2}(n_{2})\beta_{1}(\underline{n}-\underline{1}_{1}-\underline{1}_{2})\mu_{1}(n_{1}-1)d_{0,1}(\underline{n}),\vspace{2mm}\\
p_{0,-2}(\underline{n})=&\theta_{2}(n_{2})\bar{\theta}_{1}(n_{1})\beta_{2}(\underline{n}-\underline{1}_{2})\mu_{2}(n_{2}-1)d_{0,0}(\underline{n})\\&+\theta_{1}(n_{1})\theta_{2}(n_{2})\beta_{2}(\underline{n}-\underline{1}_{1}-\underline{1}_{2})\mu_{2}(n_{2}-1)d_{1,0}(\underline{n}),\vspace{2mm}\\
p_{-2,1}(\underline{n})=&\theta_{1}(n_{1})\bar{\theta}_{2}(n_{2})\beta_{1}(\underline{n}-\underline{1}_{1})\mu_{1}(n_{1}-1)d_{0,1}(\underline{n}),\vspace{2mm}\\
p_{1,-2}(\underline{n})=&\theta_{2}(n_{2})\bar{\theta}_{1}(n_{1})\beta_{2}(\underline{n}-\underline{1}_{2})\mu_{2}(n_{2}-1)d_{1,0}(\underline{n}),\vspace{2mm}\\
p_{-2,-1}(\underline{n})=&\theta_{1}(n_{1})\theta_{2}(n_{2})\beta_{1}(\underline{n}-\underline{1}_{1}-\underline{1}_{2})\mu_{1}(n_{1}-1)d_{0,0}(\underline{n}),\vspace{2mm}\\
p_{-1,-2}(\underline{n})=&\theta_{1}(n_{1})\theta_{2}(n_{2})\beta_{2}(\underline{n}-\underline{1}_{1}-\underline{1}_{2})\mu_{2}(n_{2}-1)d_{0,0}(\underline{n}),\vspace{2mm}\\
p_{1,-1}(\underline{n})=&\theta_{2}(n_{2})\bar{\theta}_{1}(n_{1})[\beta_{2}(\underline{n}-\underline{1}_{2})\mu_{2}(n_{2}-1)d_{1,1}(\underline{n})+(\beta_{1}(\underline{n}-\underline{1}_{2})\bar{\mu_{1}}(n_{1})\\&+\beta_{2}(\underline{n}-\underline{1}_{2})\bar{\mu}_{2}(n_{2}-1))d_{1,0}(\underline{n})]
+\bar{\theta}_{1}(n_{1})\bar{\theta}_{2}(n_{2})\beta_{2}(\underline{n})\mu_{2}(n_{2})d_{1,0}(\underline{n}),\vspace{2mm}\\
p_{1,1}(\underline{n})=&\bar{\theta}_{1}(n_{1})\bar{\theta}_{2}(n_{2})(\beta_{1}(\underline{n})\bar{\mu_{1}}(n_{1})+\beta_{2}(\underline{n})\bar{\mu}_{2}(n_{2}))d_{1,1}(\underline{n}),\vspace{2mm}\\
p_{-1,-1}(\underline{n})=&\theta_{2}(n_{2})\theta_{1}(n_{1})\left[\beta_{1}(\underline{n}-\underline{1}_{1}-\underline{1}_{2})\mu_{1}(n_{1}-1)d_{1,0}(\underline{n})+\beta_{2}(\underline{n}-\underline{1}_{1}-\underline{1}_{2})\mu_{2}(n_{2}-1)d_{0,1}(\underline{n})\right.\\
&\left.+(\beta_{1}(\underline{n}-\underline{1}_{1}-\underline{1}_{2})\bar{\mu_{1}}(n_{1}-1)+\beta_{2}(\underline{n}-\underline{1}_{1}-\underline{1}_{2})\bar{\mu}_{2}(n_{2})-1)d_{0,0}(\underline{n})\right]\\
&+\theta_{1}(n_{1})\bar{\theta}_{2}(n_{2})\beta_{2}(\underline{n}-\underline{1}_{1})\mu_{2}(n_{2})d_{0,0}(\underline{n})+\theta_{2}(n_{2})\bar{\theta}_{1}(n_{1})\beta_{1}(\underline{n}-\underline{1}_{2})\mu_{1}(n_{1})d_{0,0}(\underline{n}),\vspace{2mm}\\
p_{-1,0}(\underline{n})=&\theta_{1}(n_{1})\bar{\theta}_{2}(n_{2})\left[\beta_{1}(\underline{n}-\underline{1}_{1})\mu_{1}(n_{1}-1)d_{1,0}(\underline{n})+\beta_{2}(\underline{n}-\underline{1}_{1})\mu_{2}(n_{2})d_{0,1}(\underline{n})\right.\\
&\left.+(\beta_{1}(\underline{n}-\underline{1}_{1})\bar{\mu_{1}}(n_{1}-1)+\beta_{2}(\underline{n}-\underline{1}_{1})\mu_{2}(n_{2}))d_{0,0}(\underline{n})\right]\\
&+\bar{\theta_{1}}(n_{1})\bar{\theta}_{2}(n_{1})\beta_{1}(\underline{n})\mu_{1}(n_{1})d_{0,0}(\underline{n})+\theta_{2}(n_{2})\bar{\theta}_{1}(n_{1})\beta_{1}(\underline{n})\mu_{1}(n_{1})d_{0,1}(\underline{n})\\
&+\theta_{1}(n_{1})\theta_{2}(n_{2})[\beta_{1}(\underline{n}-\underline{1}_{1}-\underline{1}_{2})(\mu_{1}(n_{1}-1)d_{1,1}(\underline{n})+\bar{\mu}_{1}(n_{1}-1)d_{0,1}(\underline{n}))\\&+\beta_{2}(\underline{n}-\underline{1}_{1}-\underline{1}_{2})\bar{\mu}_{2}(n_{2}-1)d_{0,1}(\underline{n})],\vspace{2mm}
\end{array}
\end{displaymath}
\begin{displaymath}
\begin{array}{rl}
p_{1,0}(\underline{n})=&\theta_{2}(n_{2})\bar{\theta}_{1}(n_{1})(\beta_{1}(\underline{n}-\underline{1}_{2})\bar{\mu_{1}}(n_{1})+\beta_{2}(\underline{n}-\underline{1}_{2})\bar{\mu}_{2}(n_{2}-1))d_{1,1}(\underline{n})\\
&+\bar{\theta}_{1}(n_{1})\bar{\theta}_{2}(n_{2})[\beta_{2}(\underline{n})\mu_{2}(n_{2})d_{1,1}(\underline{n})+(\beta_{1}(\underline{n})\bar{\mu_{1}}(n_{1})+\beta_{2}(\underline{n})\bar{\mu}_{2}(n_{2}))d_{1,0}(\underline{n})],\vspace{2mm}\\
p_{-1,1}(\underline{n})=&\theta_{1}(n_{1})\bar{\theta}_{2}(n_{2})[\beta_{1}(\underline{n}-\underline{1}_{1})\mu_{1}(n_{1}-1)d_{1,1}(\underline{n})+(\beta_{1}(\underline{n}-\underline{1}_{1})\bar{\mu_{1}}(n_{1}-1)\\&+\beta_{2}(\underline{n}-\underline{1}_{1})\bar{\mu}_{2}(n_{2}))d_{0,1}(\underline{n})]+\bar{\theta}_{1}(n_{1})\bar{\theta}_{2}(n_{2})\beta_{1}(\underline{n})\mu_{1}(n_{1})d_{0,1}(\underline{n}),\vspace{2mm}\\\\
p_{0,-1}(\underline{n})=&\theta_{2}(n_{2})\bar{\theta}_{1}(n_{1})\left[\beta_{1}(\underline{n}-\underline{1}_{2})\mu_{1}(n_{1})d_{1,0}(\underline{n})+\beta_{2}(\underline{n}-\underline{1}_{2})\mu_{2}(n_{2}-1)d_{0,1}(\underline{n})\right.\\
&\left.+(\beta_{1}(\underline{n}-\underline{1}_{2})\bar{\mu_{1}}(n_{1})+\beta_{2}(\underline{n}-\underline{1}_{2})\mu_{2}(n_{2}-1))d_{0,0}(\underline{n})\right]\\
&+\bar{\theta_{1}}(n_{1})\bar{\theta}_{2}(n_{2})\beta_{2}(\underline{n})\mu_{2}(n_{2})d_{0,0}(\underline{n})+\theta_{1}(n_{1})\bar{\theta}_{2}(n_{2})\beta_{2}(\underline{n}-\underline{1}_{1})\mu_{2}(n_{2})d_{1,0}(\underline{n})\\
&+\theta_{1}(n_{1})\theta_{2}(n_{2})[\beta_{2}(\underline{n}-\underline{1}_{1}-\underline{1}_{2})(\mu_{2}(n_{2}-1)d_{1,1}(\underline{n})+\bar{\mu}_{2}(n_{2}-1)d_{1,0}(\underline{n}))\\&+\beta_{1}(\underline{n}-\underline{1}_{1}-\underline{1}_{2})\bar{\mu}_{1}(n_{1}-1)d_{1,0}(\underline{n})],\vspace{2mm}\\
p_{0,1}(\underline{n})=&\theta_{1}(n_{1})\bar{\theta}_{2}(n_{2})(\beta_{1}(\underline{n}-\underline{1}_{1})\bar{\mu_{1}}(n_{1}-1)+\beta_{2}(\underline{n}-\underline{1}_{1})\bar{\mu}_{2}(n_{2}))d_{1,1}(\underline{n})\\
&+\bar{\theta}_{1}(n_{1})\bar{\theta}_{2}(n_{2})[\beta_{1}(\underline{n})\mu_{1}(n_{1})d_{1,1}(\underline{n})+(\beta_{1}(\underline{n})\bar{\mu_{1}}(n_{1})+\beta_{2}(\underline{n})\bar{\mu}_{2}(n_{2}))d_{0,1}(\underline{n})],\vspace{2mm}\\
p_{0,0}(\underline{n})=&\theta_{1}(n_{1})\theta_{2}(n_{2})(\beta_{1}(\underline{n}-\underline{1}_{1}-\underline{1}_{2})\bar{\mu_{1}}(n_{1}-1)+\beta_{2}(\underline{n}-\underline{1}_{1}-\underline{1}_{2})\bar{\mu}_{2}(n_{2}-1))d_{1,1}(\underline{n})\\
&+\theta_{1}(n_{1})\bar{\theta}_{2}(n_{2})[\beta_{2}(\underline{n}-\underline{1}_{1})\mu_{2}(n_{2})d_{1,1}(\underline{n})+(\beta_{1}(\underline{n}-\underline{1}_{1})\bar{\mu_{1}}(n_{1}-1)\\&+\beta_{2}(\underline{n}-\underline{1}_{1})\bar{\mu}_{2}(n_{2}))d_{1,0}(\underline{n})]\\
&+\theta_{2}(n_{2})\bar{\theta}_{2}(n_{2})[\beta_{1}(\underline{n}-\underline{1}_{2})\mu_{1}(n_{1})d_{1,1}(\underline{n})+(\beta_{1}(\underline{n}-\underline{1}_{2})\bar{\mu_{1}}(n_{1})\\&+\beta_{2}(\underline{n}-\underline{1}_{2})\bar{\mu}_{2}(n_{2}-1))d_{0,1}(\underline{n})]\\
&+\bar{\theta}_{1}(n_{1})\bar{\theta}_{2}(n_{2})[\beta_{1}(\underline{n})\mu_{1}(n_{1})d_{1,0}(\underline{n})+\beta_{2}(\underline{n})\mu_{2}(n_{2})d_{0,1}(\underline{n})\\&+(\beta_{1}(\underline{n})\bar{\mu_{1}}(n_{1})+\beta_{2}(\underline{n})\bar{\mu}_{2}(n_{2}))d_{0,0}(\underline{n})],
\end{array}
\end{displaymath}
where $d_{i,j}(\underline{n})$, $i,j=0,1$ are as in \eqref{hjj}. \vspace{-0.25in}
\bibliographystyle{spmpsci}      
\bibliography{paper_arxiv_version3}
%
%

\end{document}